\documentclass{amsart}
\usepackage{amssymb}
\usepackage{stmaryrd} 
\usepackage{amsmath} 
\usepackage{amscd}
\usepackage{amsbsy}
\usepackage{commath, longtable}
\usepackage{comment, enumerate}
\usepackage{geometry}
\usepackage[matrix,arrow]{xy}
\usepackage{hyperref}
\usepackage{mathrsfs}
\usepackage{color}
\usepackage{mathtools,caption}
\usepackage[table,dvipsnames]{xcolor}
\usepackage{tikz-cd}
\usepackage{longtable}
\usepackage[utf8]{inputenc}
\usepackage[OT2,T1]{fontenc}
\usepackage[normalem]{ulem}
\usepackage{hyperref}
\usepackage[customcolors]{hf-tikz}
\hypersetup{
 colorlinks=true,
 linkcolor=DarkOrchid,
 filecolor=blue,
 citecolor=olive,
 urlcolor=orange,
 pdftitle={IAS project},
 }
\usepackage{booktabs}

\DeclareSymbolFont{cyrletters}{OT2}{wncyr}{m}{n}
\DeclareMathSymbol{\Sha}{\mathalpha}{cyrletters}{"58}

\newcommand{\con}{\mathrm{cong}}

\newcommand{\mat}[1]{\left[ \begin{smallmatrix} #1 \end{smallmatrix} \right]}


\DeclareMathOperator{\Frob}{Frob}
\DeclareMathOperator{\Gal}{Gal}

\DeclareMathOperator{\GL}{GL}

\DeclareMathOperator{\rank}{rank}

\DeclareMathOperator{\Sel}{Sel}

\DeclareMathOperator{\ns}{ns}

\newcommand{\cg}{\cellcolor{gray!20}}
\newcommand{\QQ}{\mathbb Q}
\newcommand{\ZZ}{\mathbb Z}

\newcommand{\bF}{\mathbb F}
\newcommand{\cE}{\mathcal E}

\newcommand{\cO}{\mathcal O}

\newcommand{\cC}{\mathcal{C}}

\newcommand{\fD}{\mathfrak{D}}

\newcommand{\sP}{\mathscr{P}}
\newcommand{\sQ}{\mathscr{Q}}
\newcommand{\pdiv}{\mid\!\mid}

\newcommand{\ta}[1]{\tikzmarkin[fill=gray!20, set border color=gray!20]{#1}(0,-0.35)(-0,0.5)}
\newcommand{\tb}[1]{\tikzmarkend{#1}}

\newcommand{\Z}{{\mathbb{Z}}}
\newcommand{\Q}{{\mathbb{Q}}}

\newcommand{\F}{{\mathbb{F}}}
\newcommand{\trace}{{\mathrm{trace}}}

\newcommand{\Tam}{{\mathrm{Tam}}}
\newcommand{\jr}{\bar\rho_{E_1,E_2,\ell}}

\newtheorem*{theorem*}{Theorem}
\newtheorem{theorem}{Theorem}[section]
\newtheorem{lemma}[theorem]{Lemma}
\newtheorem{proposition}[theorem]{Proposition}
\newtheorem{corollary}[theorem]{Corollary}

\newtheorem{problem}[theorem]{Problem}
\newtheorem{lthm}{Theorem}

\newcommand{\Zl}{\Z_\ell}
\definecolor{Green}{rgb}{0.0, 0.5, 0.0}

\theoremstyle{definition}
\newtheorem{definition}[theorem]{Definition}
\theoremstyle{remark}
\newtheorem{remark}[theorem]{Remark}
\newtheorem*{Remark*}{Remark}
\newtheorem{example}[theorem]{Example}

\makeatletter
\theoremstyle{plain} 
\newtheorem*{intr@thm}{\intr@thmname}

\newtheorem*{c@njecture}{\conjn@name}
\newcommand{\myl@bel}[2]{%
  \protected@write \@auxout {}{\string \newlabel {#1}{{#2}{\thepage}{#2}{#1}{}} }%
  \hypertarget{#1}{}
    } 
\newenvironment{labelledconj}[3][]%
    {
        \def\conjn@name{#2}
        \begin{c@njecture}[{#1}]\myl@bel{#3}{#2}
    }
    {
        \end{c@njecture}
    }
\makeatother

\begin{document}
\title[]{Studying Hilbert's $10^{\text{th}}$ problem via explicit elliptic curves}

\author[D.~Kundu]{Debanjana Kundu}
\address[Kundu]{Department of Mathematics \\ University of British Columbia \\
 Vancouver BC, V6T 1Z2, Canada}
\email{dkundu@math.ubc.ca}

\author[A.~Lei]{Antonio Lei}
\address[Lei]{Department of Mathematics and Statistics\\University of Ottawa\\
150 Louis-Pasteur Pvt\\
Ottawa, ON\\
Canada K1N 6N5}
\email{antonio.lei@uottawa.ca}

\author[F.~Sprung]{Florian Sprung}
\address[Sprung]{School of Mathematical and Statistical Sciences, Arizona State University, 901 S Palm Walk, Tempe AZ 85287-1804, USA}
\email{ian.sprung@gmail.com}

\date{\today}

\keywords{Hilbert's 10th problem, elliptic curves, Heegner points, Mordell--Weil ranks, congruent number elliptic curves}
\subjclass[2020]{Primary 11G05, 11U05}

\begin{abstract}
N.~García-Fritz and H.~Pasten showed that Hilbert's $10^{\text{th}}$ problem is unsolvable in the ring of integers of number fields of the form $\mathbb{Q}(\sqrt[3]{p},\sqrt{-q})$ for positive proportions of primes $p$ and $q$.
We improve their proportions and extend their results to the case of number fields of the form $\mathbb{Q}(\sqrt[3]{p},\sqrt{Dq})$, where $D$ belongs to an explicit family of positive square-free integers. 
We achieve this by using multiple elliptic curves, and replace their Iwasawa theory arguments by a more direct method.
\end{abstract}

\maketitle

\section{Introduction}

\subsection{Historical Remarks}
In 1900, D.~Hilbert posed the following problem:
\begin{labelledconj}{Hilbert's \texorpdfstring{$10^{\text{th}}$}{} Problem}{H10}
``Given a Diophantine equation with any number of unknown quantities and with rational integral numerical coefficients: to devise a process according to which it can be determined in a finite number of operations whether the equation is solvable in rational integers.\footnote{Gegeben eine diophantische Gleichung mit beliebig vielen Unbekannten und mit rationalen ganzzahligen numerischen Koeffizienten: Ein Verfahren entwickeln, nach dem in endlich vielen Operationen bestimmt werden kann, ob die Gleichung in rationalen ganzen Zahlen lösbar ist.}''
\end{labelledconj}

This problem is now known to have a negative answer, i.e., such a general `process' (algorithm) does not exist.
In one sentence, the reason for this negative answer is that the \emph{computably enumerable sets} (whose elements an algorithm can list) in $\mathbb{Z}$ are precisely the \emph{Diophantine} sets over $\mathbb{Z}$ (roughly, `coming from polynomials and integers', see Definition~\ref{defn: Diophantine over a ring}).
For the convenience of the reader, we give a brief sketch of the argument described in \cite[Chapter~5, Section~7]{murty2019hilbert}.

The Diophantine sets of integers can be enumerated -- call them $D_1, D_2, \ldots, D_n, \ldots $ However, not all sets are Diophantine.
For example, Cantor's diagonalization method shows that the set $V=\{n:n\not\in D_n\}$ is not Diophantine.
The set of ordered pairs $(n,x)$ with $n$ an integer and $x\in D_n$ is Diophantine (\emph{Universality Theorem}), i.e., they are characterized as the solutions to $f(n,x, y_1,\ldots, y_m)=0$ for some integers $y_1, \ldots, y_m$ and a fixed polynomial $f$.
Now if a putative algorithm $\mathcal{A}$ could determine whether an arbitrary Diophantine equation has integral solutions, then $\mathcal{A}$ could do this for $f$ and thus test whether $x\in D_n$.
But then $\mathcal{A}$ could test whether $n\in D_n$ and list all those $n$ with $n\not\in D_n$, contradicting that $V$ is \emph{not} computably enumerable.

M.~Davis, H.~Putnam, and J.~Robinson made significant progress towards answering \ref{H10} in the 1950's and 1960's, showing that the computably enumerable sets are exponential Diophantine\footnote{A set $S$ of ordered $n$-tuples is called \emph{exponential Diophantine} if there exists a polynomial $f(x_1, \ldots, x_n, u_1, \ldots, u_m, v_1, \ldots, v_m, w_1, \ldots, w_m)$ such that $(x_1, \ldots, x_n)\in S$ if and only if there exists $u_i, v_i, w_i$ with $f=0$ and $u_i=v_i^{w_i}$ for each $i$.} \cite{davis1961decision}.
The final piece of the puzzle, showing that the exponential function is Diophantine, was proved by Y.~Matiyasevi\v{c} in 1970 \cite{matijasevic1970enumerable}.
The proof involved showing that the Fibonacci numbers are Diophantine, and allowed for the conclusion that no algorithm as solicited by Hilbert exists:

\begin{labelledconj}{MRDP Theorem}{Thm:Matiyasevich}
\ref{H10} has a negative solution.
\end{labelledconj}

J.~Denef and L.~Lipshitz \cite{DL78} generalized \ref{H10} as follows:

\begin{labelledconj}{Denef--Lipshitz Conjecture}{conj}
$\Z$ is a Diophantine subset of the ring of integers of any number field $L$.
\end{labelledconj}

Here are the instances where this conjecture has been resolved.
\begin{enumerate}[(a)]
\item $L$ is totally real or a quadratic extension of a totally real field; see \cite{denef,DL78}.
\item $[L:\Q]=4$, $L$ is not totally real, and $L/\Q$ has a proper intermediate field; see \cite{DL78}.
\item $L$ has exactly one complex place; see \cite{Phe88, Shlapen, videla16decimo}.
\item $L$ is a subfield of one of the extensions mentioned above; see \cite{SS89}.
In particular, it follows from the Kronecker--Weber Theorem that the analogue of \ref{H10} is unsolvable when $L/\Q$ is abelian.
\item If the conjecture holds for a number field $F$, then it holds for certain infinite families of degree $\ell^n$-extensions $L$ of $F$.
More precisely, once $F$ has been chosen, then for all but finitely many primes $\ell$ and all $n\geq1$, the conjecture holds for infinitely many cyclic $\ell^n$-extensions $L$ of $F$; see \cite{MR10, MR18}\footnote{We thank Karl Rubin for patiently clarifying this point.
The main ingredient for deriving this from \cite[Theorem 1.2]{MR18} is that the simple abelian variety can be chosen to be a non-CM elliptic curve.}.  See also \cite{ray} for a recent result on cyclotomic $\Zl$-extensions.
\item\label{gfpresult} $L$ belongs to an explicit family of number fields of the form $\Q(\sqrt[3]{p},\sqrt{-q})$; see \cite{GFP}.
\end{enumerate}

The last two results make use of a link initiated by B.~Poonen between the \ref{conj} and the theory of elliptic curves.
Poonen showed in \cite{Poo02} that extensions of number fields are \emph{integrally Diophantine} (Definition \ref{def: Integrally Diophantine}) if there is an elliptic curve whose rational points in those number fields have rank $1$.
A.~Shlapentokh in \cite{Shlapen} vastly generalized this to arbitrary non-zero rank (\ref{Thm:Shlapentokh}).
B.~Mazur and K.~Rubin showed in \cite{MR10} that if the $2$-torsion part of the Shafarevich--Tate groups of elliptic curves is a square for every number field, then the \ref{conj} holds.
In \cite{MP18}, R.~Murty and H.~Pasten considered this problem from the point of view of analytic aspects of $L$-functions instead, showing that the automorphy conjecture and the rank part of the Birch and Swinnerton–Dyer conjecture would imply it.

The precise version of the most recent result (\ref{gfpresult}) above of N.~García-Fritz and H.~Pasten is:

\begin{theorem}[García-Fritz--Pasten]
\label{thm:GFP}
There are explicit Chebotarev sets (see Definition~\ref{defi: Chebotarev set}) of primes $\sP$ and $\sQ$, of density $\frac{5}{16}$ and $\frac{1}{12}$, such that for all $p\in \sP$ and $q\in \sQ$, the analogue of \ref{H10} is unsolvable for the ring of integers of $L=\Q(\sqrt[3]{p},\sqrt{-q})$.
\end{theorem}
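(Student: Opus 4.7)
The plan is to apply the Poonen--Shlapentokh framework: to establish the \ref{conj} for $L=\Q(\sqrt[3]{p},\sqrt{-q})$, it suffices to exhibit a subfield $K\subsetneq L$ for which the conjecture is already known together with an elliptic curve $E/\Q$ satisfying $\rank E(K)=\rank E(L)\geq 1$, since then Shlapentokh's theorem shows $L/K$ is integrally Diophantine and the conjecture lifts from $K$ to $L$. The natural choice is $K=\Q(\sqrt{-q})$, an imaginary quadratic field, so case (a) of the preceding list yields the \ref{conj} for $\mathcal{O}_K$ unconditionally.

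The first task is to secure positive rank over $K$. I pick a modular elliptic curve $E/\Q$ of small conductor $N$ and restrict $q$ to a Chebotarev set forcing every prime divisor of $N$ to split in $K$; this is the Heegner hypothesis. A Heegner point $y_K\in E(K)$ is then defined, and Gross--Zagier combined with Kolyvagin's theorem gives $\rank E(K)=1$ provided $L'(E/K,1)\neq 0$. Standard analytic non-vanishing results for derivatives of quadratic-twist $L$-series guarantee that this latter condition holds for a positive proportion of the Heegner-eligible $q$.

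The second and main obstacle is to ensure $\rank E(L)=\rank E(K)=1$, i.e.\ that the Mordell--Weil rank does not grow in the non-Galois cubic extension $L/K=K(\sqrt[3]{p})/K$. The potentially new part of $E(L)\otimes\Q$ is detected by certain two-dimensional Artin representations of $\Gal(L(\zeta_3)/K)$ induced from order-three characters, and non-growth amounts to the non-vanishing at the central critical point of the associated twisted $L$-series. This is the delicate step, and it is where García-Fritz--Pasten deploy Iwasawa theory: one proves a control theorem for the $3$-adic Selmer group along a suitable $\Z_3$-tower of $K$ and combines it with the non-triviality of the Heegner Euler system to deduce that neither the Selmer rank nor the Mordell--Weil rank can grow in the cubic direction. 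The argument requires $E$ to have ordinary reduction at $3$ together with Chebotarev conditions on the splitting behaviour of $p$ in auxiliary extensions of $K$.

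Putting the pieces together, the set $\sQ$ is cut out by the Heegner hypothesis combined with the analytic non-vanishing condition, producing density $\tfrac{1}{12}$, while $\sP$ is cut out by the splitting conditions on $p$ required for the Iwasawa-theoretic control argument, producing density $\tfrac{5}{16}$. For $p\in\sP$ and $q\in\sQ$ the chosen $E$ satisfies $\rank E(L)=\rank E(K)=1$, and Shlapentokh's criterion then yields the analogue of \ref{H10} for $\mathcal{O}_L$.
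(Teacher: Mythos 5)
Your overall decomposition is genuinely different from the one the paper (and García-Fritz--Pasten) actually use, and the step you yourself flag as delicate is left unproved. The actual argument takes the known base field to be the \emph{cubic} field $F=\Q(\sqrt[3]{p})$, which is integrally Diophantine over $\Q$ because it has exactly one complex place; it then proves only the much weaker rank statement that a fixed \emph{rank-zero} auxiliary curve keeps rank zero over $F$ (via triviality of the $3$-Selmer group over $\Q(\mu_3,\sqrt[3]{p})$, as in Theorem~\ref{thm:growth-rank}, or via the Iwasawa-theoretic version in GFP), gets rank one over $K=\Q(\sqrt{-q})$ from the explicit Kriz--Li twisting criteria, and finally applies Proposition~\ref{prop: GFP 3.3}: there Shlapentokh's theorem is applied to the quadratic twist $E^{(d_K)}$ over $F$, for which $\rank E^{(d_K)}(L)=\rank E^{(d_K)}(F)>0$ holds automatically, and one concludes with the \ref{Thm:transitivity}. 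This arrangement is designed precisely to avoid having to show that the rank of a positive-rank curve does not grow in the cubic direction, which is the burden your route takes on.

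That burden is where your proposal has a genuine gap. You give no working mechanism for $\rank E(L)=\rank E(K)=1$: the extension $K(\sqrt[3]{p})/K$ is not Galois (let alone abelian, since $\zeta_3\notin K$ for $q\neq 3$), so it is not contained in any $\Z_3$-extension of $K$, and a ``control theorem along a $\Z_3$-tower of $K$'' cannot detect it; one would have to work over $K(\zeta_3)$ in a false-Tate-type tower, and for a curve of positive rank (hence non-trivial Selmer group) the Selmer-vanishing arguments that drive the real proof are unavailable. The densities also do not come out as you assert: $\tfrac{1}{12}$ must be the density of an \emph{explicit Chebotarev set}, which the Kriz--Li congruence and splitting conditions on $q$ provide but generic non-vanishing results for $L'(E/K,1)$ (``positive proportion'' twisting theorems) do not; and the $\tfrac{5}{16}$ in GFP comes from a mod-$3$ Frobenius count (the conditions $p\equiv 1\pmod 3$ and a trace condition on $a_p$) governing Selmer triviality over $\Q(\mu_3,\sqrt[3]{p})$ for the rank-zero curve, not from splitting conditions attached to a Heegner--Euler-system control argument over $K$. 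As written, the proposal does not prove the theorem.
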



\subsection{Our results}
We give a more direct proof of Theorem~\ref{thm:GFP} without using results from Iwasawa theory and improve the density of both $\sP$ and $\sQ$, as well as proving similar results for new families of extensions.
The key result for this more direct proof is a vanishing theorem of certain Selmer groups, see Theorem~\ref{thm:growth-rank}, which allows us to form the set $\sP$.
Similar to the method utilized in \cite{GFP}, the set $\sQ$ is obtained by seeking rank-one quadratic twists of our auxiliary elliptic curve, which relies crucially on the work of D.~Kriz and C.~Li \cite{krizli}.
By carefully refining the techniques developed in \cite{GFP}, we prove the following improvement of Theorem~\ref{thm:GFP}.
\begin{lthm}[{Theorem~\ref{thm: improve P and Q of GFP}}]
\label{thmA}
There are explicit Chebotarev sets of primes $\sP$ and $\sQ$, of density $\frac{9}{16}$ and $\frac{7}{48}$, such that for all $p\in \sP$ and $q\in \sQ$, the analogue of \ref{H10} is unsolvable for the ring of integers of $L=\Q(\sqrt[3]{p},\sqrt{-q})$.
\end{lthm}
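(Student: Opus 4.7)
The plan is to follow the Poonen--Shlapentokh framework of building an integrally Diophantine chain from $\Z$ up to $\cO_L$. Since $\Q(\sqrt{-q})$ is imaginary quadratic, case (a) of the known results already gives that $\Z$ is Diophantine in $\cO_{\Q(\sqrt{-q})}$. It therefore suffices, by \ref{Thm:Shlapentokh}, to exhibit an elliptic curve $E$ defined over $\Q(\sqrt{-q})$ with $\rank E(L) = \rank E(\Q(\sqrt{-q})) \geq 1$. Taking $E$ as the base change of a suitable auxiliary curve over $\Q$ with $\rank E(\Q) = 0$, the two rank identities
\[
\rank E(\Q(\sqrt{-q})) = \rank E(\Q) + \rank E^{(-q)}(\Q), \qquad \rank E(L) = \rank E(\Q(\sqrt[3]{p})) + \rank E^{(-q)}(\Q(\sqrt[3]{p}))
\]
reduce the problem to verifying (i) that $E^{(-q)}$ has rank $1$ over $\Q$, and (ii) that the ranks of both $E$ and $E^{(-q)}$ are preserved under the cubic extension $\Q(\sqrt[3]{p})/\Q$.

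Condition (i) is supplied by the work of Kriz--Li \cite{krizli}: primes $q$ for which the analytic rank of $E^{(-q)}$ is $1$ are cut out by a Chebotarev condition on $q$ and automatically have algebraic rank $1$. This produces the set $\sQ$. Condition (ii) is precisely the role of Theorem~\ref{thm:growth-rank}: it produces a Chebotarev set of primes $p$ for which the relevant Selmer groups in $\Q(\sqrt[3]{p})/\Q$ vanish for both $E$ and $E^{(-q)}$, forcing the rank equalities $\rank E(\Q(\sqrt[3]{p})) = \rank E(\Q) = 0$ and $\rank E^{(-q)}(\Q(\sqrt[3]{p})) = \rank E^{(-q)}(\Q) = 1$. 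Combining (i) and (ii) yields $\rank E(L) = 1 = \rank E(\Q(\sqrt{-q}))$, whence \ref{Thm:Shlapentokh} and case (a) together finish the proof.

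The improvement from the densities $\tfrac{5}{16}$ and $\tfrac{1}{12}$ in Theorem~\ref{thm:GFP} to $\tfrac{9}{16}$ and $\tfrac{7}{48}$ is to be achieved by using several auxiliary elliptic curves instead of a single one, matched pairwise so that the conditions cutting out $\sP$ and $\sQ$ stay compatible. Each auxiliary curve contributes its own Chebotarev condition, and a careful union of these conditions enlarges the resulting sets beyond what a single curve can reach. The main obstacle will be the density bookkeeping: one must choose the auxiliary curves so that the Selmer vanishing hypotheses of Theorem~\ref{thm:growth-rank} — required simultaneously for each curve \emph{and} its $-q$-twist — do not over-constrain the primes $p$, and simultaneously so that the Kriz--Li conditions on $q$ for the various twisted curves combine to a genuine union rather than a near-intersection. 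Verifying that the Galois-theoretic conditions from the different auxiliary curves are sufficiently independent, so as to attain exactly the stated Chebotarev proportions, is the crux of the refinement over \cite{GFP}.
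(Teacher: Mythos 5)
There is a genuine gap in the way you route the Diophantine chain. You go through $K=\Q(\sqrt{-q})$ and then need Shlapentokh's criterion for $L/K$, which forces you to control \emph{two} ranks over $F=\Q(\sqrt[3]{p})$: you need $\rank E(F)=0$ \emph{and} $\rank E^{(-q)}(F)=\rank E^{(-q)}(\Q)=1$. You assert that Theorem~\ref{thm:growth-rank} supplies the second equality, but it cannot: its hypothesis (2) is $\Sel_{\ell}(E/\Q(\mu_\ell))=0$, which fails for any curve of positive rank such as the twist $E^{(-q)}$, and its conclusion is that the Selmer group over $\Q(\mu_\ell,\sqrt[\ell]{a})$ is \emph{trivial}, i.e.\ rank $0$ --- incompatible with preserving rank $1$. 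Moreover, any condition on $p$ involving $E^{(-q)}$ would make $\sP$ depend on $q$, destroying the product structure ``for all $p\in\sP$ and $q\in\sQ$'' with fixed densities. The paper avoids this entirely by building the chain through $F$ instead: $F/\Q$ is integrally Diophantine because $\Q(\sqrt[3]{p})$ has exactly one complex place, and Proposition~\ref{prop: GFP 3.3} (Shlapentokh applied over $F$ to the quadratic twist of $E$ by $K$) gives $L/F$ integrally Diophantine using only $\rank E(F)=0$ and $\rank E(K)>0$; the rank of the twist over $F$ never needs to be pinned down, since when $\rank E(F)=0$ the Shlapentokh equality $\rank E^{(d_K)}(L)=\rank E^{(d_K)}(F)$ holds automatically whatever that common value is. Your proposal, as written, needs a rank-stability statement for positive-rank curves in cubic extensions that neither Theorem~\ref{thm:growth-rank} nor anything else in the paper provides.

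The density mechanism you sketch is also not the one that produces $\frac{9}{16}$ and $\frac{7}{48}$. Theorem~\ref{thm: improve P and Q of GFP} uses a \emph{single} auxiliary elliptic curve, $E=557b1$. The jump from $\frac{5}{16}$ to $\frac{9}{16}$ for $\sP$ comes from enlarging $\sP(E,3)$ by dropping the congruence $p\equiv 1\pmod{3}$ imposed in \cite{GFP}, with the count done in Lemma~\ref{lem:proportion}; and the jump from $\frac{1}{12}$ to $\frac{7}{48}$ for $\sQ$ comes from taking the union of the Kriz--Li sets attached to \emph{three auxiliary imaginary quadratic fields} $\Q(\sqrt{-7})$, $\Q(\sqrt{-79})$, $\Q(\sqrt{-127})$ (Lemma~\ref{proportion of set Q for two imaginary quadratic} with $n=3$ gives $\frac{1}{6}(1-\frac{1}{8})=\frac{7}{48}$, and Lemma~\ref{one curve rank 1 in two extensions} gives the rank-one conclusion). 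Multiple auxiliary elliptic curves enter only in the later results (e.g.\ Theorem~\ref{thm: real quadratic two EC two K} with density $\frac{103}{128}$), and requiring Selmer vanishing simultaneously for each curve \emph{and its $-q$-twist}, as you propose, is both unnecessary in the paper's setup and unattainable from Theorem~\ref{thm:growth-rank} for the reason above.
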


The key ingredient of the proofs of both Theorems~\ref{thm:GFP} and \ref{thmA} is the exhibition of an explicit auxiliary elliptic curve $E$ (of Mordell--Weil rank $0$) such that the Mordell--Weil ranks of $E$ over $\QQ(\sqrt[3]{p})$ and $\QQ(\sqrt{-q})$ are $0$ and $1$, respectively (see \S~\ref{section: strategy} where we review the general strategy employed).
By working with a new auxiliary elliptic curve, we are able to prove a similar result for the following new families of number fields:
\begin{lthm}[{Theorem~\ref{thm: real quadratic one EC two K}}]
\label{thmB}
Let
\[
\fD=\{7, 39, 95, 127, 167, 255, 263, 271, 303, 359, 391, 447, 479, 527, 535, 615, 623, 655, 679, 695\}.
\]
For all $D\in\fD$, there are explicit Chebotarev sets of primes $\sP$ (independent of $D$) and $\sQ_D$, of density $\frac{9}{16}$ and $\frac{1}{12}$ such that for all $p\in \sP$ and $q\in \sQ_D$, the analogue of \ref{H10} is unsolvable for the ring of integers of $L=\Q(\sqrt[3]{p},\sqrt{Dq})$.
\end{lthm}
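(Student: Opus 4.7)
The plan is to apply the general strategy reviewed in~\S\ref{section: strategy}. Concretely, I aim to exhibit an auxiliary elliptic curve $E/\Q$ of Mordell--Weil rank zero together with a Chebotarev set $\sP$ of density $9/16$ satisfying $\rank E(\Q(\sqrt[3]{p}))=0$ for all $p\in\sP$, and, for each $D\in\fD$, a Chebotarev set $\sQ_D$ of density $1/12$ satisfying $\rank E(\Q(\sqrt{Dq}))=1$ for all $q\in\sQ_D$. Given such data, the decomposition of $E(\tilde L)\otimes\Q$ under the action of $\Gal(\tilde L/\Q)\cong S_3\times\Z/2\Z$ (where $\tilde L$ is the Galois closure of $L=\Q(\sqrt[3]{p},\sqrt{Dq})$) forces $\rank E(L)=1$, whence Shlapentokh's theorem (\ref{Thm:Shlapentokh}) yields that $\Q(\sqrt{Dq})$ is integrally Diophantine in $L$. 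Since $Dq>0$, the field $\Q(\sqrt{Dq})$ is totally real, and case~(a) of the introduction provides \ref{conj} for it; this then propagates to $L$, giving the desired unsolvability.

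For the condition defining $\sP$, I would invoke the Selmer vanishing theorem~\ref{thm:growth-rank}. Its hypotheses unfold into purely local, Chebotarev-type constraints on $p$ dictated by the reduction type and $3$-torsion structure of $E$ in the pure cubic direction; because these depend only on $E$ and not on the quadratic datum $Dq$, the resulting $\sP$ is automatically independent of $D$, and the density computation proceeds exactly as in the proof of Theorem~\ref{thmA}, yielding $9/16$.

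For the condition defining $\sQ_D$, since $\Q(\sqrt{Dq})$ is real quadratic I must produce rank-one \emph{positive}-discriminant quadratic twists $E^{Dq}$ of $E$ in Chebotarev families. Following the philosophy of~\cite{GFP}, I would apply the Kriz--Li machinery~\cite{krizli} (which handles real quadratic twists via congruences between Heegner points) to force analytic rank one of $E^{Dq}$ and then descend to algebraic rank one through a $p$-converse theorem. The required local root-number conditions, evaluated at the bad primes of $E$, single out the explicit family $\fD$, and for each $D\in\fD$ an additional Chebotarev constraint on $q$ produces the set $\sQ_D$ of density $1/12$.

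\textbf{The main obstacle} is the simultaneous construction of $E$ and $\fD$. In contrast to \cite{GFP} and Theorem~\ref{thmA}, where negative-discriminant twists permit direct use of Heegner points on an imaginary quadratic anticyclotomic tower, here one must engineer rank-one positive-discriminant twists; this is substantially more delicate and requires the curve $E$ to be chosen so that its local data support both the Selmer-vanishing input in the cubic direction \emph{and} the rank-one input in the real quadratic direction. The compatibility of these local sign conditions is what restricts $D$ to the explicit enumeration $\fD$ rather than an asymptotic Chebotarev family in $D$.
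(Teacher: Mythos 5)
Your reduction step contains a genuine gap. You claim that $\rank E(\Q)=\rank E(\Q(\sqrt[3]{p}))=0$ together with $\rank E(\Q(\sqrt{Dq}))=1$ forces $\rank E(L)=1$ via the $\Gal(\tilde L/\Q)\cong S_3\times\Z/2\Z$ decomposition of $E(\tilde L)\otimes\Q$. It does not: the irreducible constituent $\rho\boxtimes\chi$ (standard two-dimensional representation of $S_3$ tensored with the nontrivial character of the quadratic factor) has zero invariants under each of $\Gal(\tilde L/\Q)$, $\Gal(\tilde L/\Q(\sqrt[3]{p}))$ and $\Gal(\tilde L/\Q(\sqrt{Dq}))$, yet one-dimensional invariants under $\Gal(\tilde L/L)$. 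Equivalently, the twist $E^{(Dq)}$ may gain rank over the cubic field $\Q(\sqrt[3]{p})$ without disturbing any of your three rank hypotheses, so $\rank E(L)$ is not determined and could exceed $1$. Consequently \ref{Thm:Shlapentokh} cannot be applied to $L/\Q(\sqrt{Dq})$ as you propose, and your passage through the totally real quadratic subfield collapses. The paper sidesteps this entirely: Proposition~\ref{prop: GFP 3.3} applies \ref{Thm:Shlapentokh} to $L/F$ with $F=\Q(\sqrt[3]{p})$, using the twisted curve $E^{(Dq)}$ over $F$, for which the needed equality $\rank E^{(Dq)}(L)=\rank E^{(Dq)}(F)>0$ is automatic precisely because $\rank E(F)=0$; then one uses that $F$ has exactly one complex place (so $F/\Q$ is integrally Diophantine) and concludes by the \ref{Thm:transitivity}.

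Your plan for $\sQ_D$ and $\fD$ is also not how the argument actually runs, and as stated it is not a proof. There is no real-quadratic Heegner-point construction, no $p$-converse step of your own, and no root-number analysis singling out $\fD$: the paper fixes the explicit curve $E=704g1$, whose minimal discriminant is \emph{negative}, and an auxiliary \emph{imaginary} quadratic field $K=\Q(\sqrt{-D})$ satisfying the Heegner hypothesis and Hypothesis~\eqref{star}; the latter (a $2$-adic nonvanishing of the Heegner-point logarithm) is verified computationally for $D<700$, and this computation is exactly what produces the finite list $\fD$. Kriz--Li (Theorem~\ref{KL-result}, via Corollary~\ref{to be use for the real quadratic version}) then gives, for $d=-q\equiv 1\pmod{4}$ supported on $\sQ_K(E)$, that $E^{(d)}$ keeps rank $0$ (because $\Delta_E<0$) and $E^{(d\cdot d_K)}$ has rank one; since $d\cdot d_K=Dq>0$ this is the desired positive twist, i.e.\ $\rank E(\Q(\sqrt{Dq}))=1$ (Lemma~\ref{set Q for two imaginary quadratic one EC negative disc}). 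Moreover the density $\frac{1}{12}$ is not just "an additional Chebotarev constraint": it uses mod-$2$ surjectivity and that $\Q(E[2])$ contains neither $\Q(\sqrt{-1})$ nor $K$ (Lemma~\ref{GFP 5.1}). Your treatment of $\sP$ matches the paper in outline (Theorem~\ref{thm:growth-rank} plus Lemma~\ref{lem:proportion}, with $\sP$ independent of $D$), but the hypotheses of Theorem~\ref{thm:growth-rank} for the specific curve (vanishing of $\Sel_3(E/\Q(\mu_3))$, Tamagawa and non-anomaly conditions) must themselves be checked for an explicit curve, which your proposal leaves unspecified.
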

The families of number fields studied in Theorem~\ref{thmB} are disjoint from the ones studied in Theorem~\ref{thm:GFP} and Theorem~\ref{thmA}.
Furthermore, these number fields are not Galois over $\QQ$ and have exactly two complex places.
In particular, they are not covered in the list of previous known results.

We also prove a modified version of Theorem~\ref{thmB} by working with a pair of auxiliary elliptic curves, where we improve significantly the density of $\sP$, at the expense of a smaller set of $\fD$ and a lower density for the sets $\sQ_D$.

\begin{lthm}[{Theorem~\ref{thm: real quadratic two EC two K}}]
\label{thmC}
Let $D\in\{7,615\}$.
There are explicit Chebotarev sets of primes $\sP$ (independent of $D$) and $\sQ_D$, of density $\frac{103}{128}$ and $\frac{1}{36}$, such that for all $p\in \sP$ and $q\in \sQ_D$, the analogue of \ref{H10} is unsolvable for the ring of integers of $L=\Q(\sqrt[3]{p},\sqrt{Dq})$.
\end{lthm}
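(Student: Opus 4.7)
\textbf{Proof plan for Theorem~\ref{thmC}.} The idea is to refine the single-curve strategy of Theorem~\ref{thmB} by using a carefully chosen \emph{pair} of auxiliary elliptic curves $E_1, E_2/\Q$, each of Mordell--Weil rank zero. With two curves at our disposal, a prime $p$ is admissible for $\sP$ as soon as at least \emph{one} of the curves $E_i$ has the desired vanishing property over $\Q(\sqrt[3]{p})$, which enlarges the density of $\sP$ from $\tfrac{9}{16}$ to $\tfrac{103}{128}$; the trade-off is that we must impose the rank-one quadratic-twist condition on $q$ for \emph{both} curves simultaneously, shrinking the density of $\sQ_D$ from $\tfrac{1}{12}$ to $\tfrac{1}{36}$. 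The restriction to $D \in \{7, 615\}$ is dictated by requiring both quadratic-twist families $\{E_i^{(Dq)}\}_q$ to fit into the Kriz--Li framework of rank-one twists via Heegner points.

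\textbf{Construction of $\sP$ and $\sQ_D$.} For each $i \in \{1, 2\}$, apply the Selmer-vanishing Theorem~\ref{thm:growth-rank} to $E_i$ to produce a Chebotarev set $\sP_i$ of primes $p$ for which $\rank E_i(\Q(\sqrt[3]{p})) = 0$; set $\sP = \sP_1 \cup \sP_2$ and compute its density on the compositum of the two defining Galois extensions via inclusion-exclusion to obtain $\tfrac{103}{128}$. In parallel, for each $D \in \{7, 615\}$ and each $i$, Kriz--Li provides a Chebotarev condition $\sQ_{D,i}$ on primes $q$ ensuring $\rank E_i^{(Dq)}(\Q) = 1$, equivalently $\rank E_i(\Q(\sqrt{Dq})) = 1$ since $\rank E_i(\Q) = 0$; define $\sQ_D = \sQ_{D,1} \cap \sQ_{D,2}$ and verify that the density is $\tfrac{1}{36}$.

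\textbf{Rank matching and conclusion.} Given $p \in \sP$ and $q \in \sQ_D$, pick $i \in \{1, 2\}$ with $p \in \sP_i$. The quadratic-twist decomposition in $L/\Q(\sqrt[3]{p})$ yields
$$\rank E_i(L) = \rank E_i(\Q(\sqrt[3]{p})) + \rank E_i^{(Dq)}(\Q(\sqrt[3]{p})) = \rank E_i^{(Dq)}(\Q(\sqrt[3]{p})).$$
Since $\rank E_i^{(Dq)}(\Q) = 1$ by the Kriz--Li condition, it remains to rule out rank growth of $E_i^{(Dq)}$ in the non-Galois cubic extension $\Q(\sqrt[3]{p})/\Q$ -- equivalently, to show that the $2$-dimensional standard $S_3$-representation does not appear in $E_i^{(Dq)}(\tilde K_1)\otimes \Q$, where $\tilde K_1 = \Q(\sqrt[3]{p}, \zeta_3)$. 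This is achieved by invoking Theorem~\ref{thm:growth-rank} once more, in a form that bounds the Selmer rank of the twist. The equality $\rank E_i(L) = \rank E_i(\Q(\sqrt{Dq})) = 1$ then permits an application of Shlapentokh's criterion (Theorem~\ref{Thm:Shlapentokh}) to $\Q(\sqrt{Dq}) \subset L$, showing $\cO_{\Q(\sqrt{Dq})}$ is Diophantine in $\cO_L$; since $\Q(\sqrt{Dq})$ is a real quadratic field, item~(a) of the introduction then gives the unsolvability of H10 over $\cO_L$.

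\textbf{Main obstacle.} The crux is the rank non-growth step: one must coordinate the choice of $E_1, E_2$ so that the Selmer vanishing applies both to $E_i$ \emph{and} to its twist $E_i^{(Dq)}$ over $\Q(\sqrt[3]{p})$, no matter which curve in the pair indexes the prime $p$. Achieving this simultaneously with keeping the intersection $\sQ_{D,1} \cap \sQ_{D,2}$ nonempty of density exactly $\tfrac{1}{36}$ is the delicate balancing act, and it is precisely here that the very restricted choice $D \in \{7, 615\}$ enters.
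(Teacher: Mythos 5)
Your construction of $\sP$ (union of the two sets $\sP(E_i,3)$, density $\frac{103}{128}$ via the joint mod-$3$ image) and of $\sQ_D$ (intersection of the two Kriz--Li twist conditions, density $\frac{1}{36}$ via the joint mod-$2$ image) matches the paper. The gap is in your concluding step. You route the Diophantine argument through the real quadratic field, so you need the equality $\rank E_i(L)=\rank E_i(\Q(\sqrt{Dq}))=1$, and after the twist decomposition $\rank E_i(L)=\rank E_i(\Q(\sqrt[3]{p}))+\rank E_i^{(Dq)}(\Q(\sqrt[3]{p}))$ this forces you to prove that the \emph{twist} $E_i^{(Dq)}$ does not gain rank in $\Q(\sqrt[3]{p})$. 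You propose to get this from Theorem~\ref{thm:growth-rank} ``in a form that bounds the Selmer rank of the twist,'' but that theorem cannot apply to $E_i^{(Dq)}$: its hypothesis $\Sel_{3}(E/\Q(\mu_3))=0$ forces Mordell--Weil rank $0$, whereas $E_i^{(Dq)}$ has rank $1$ by construction; moreover $\sP$ was defined by congruence conditions on $a_v(E_i)$ for the fixed curves $E_1,E_2$, not for the $q$-dependent family of twists, so no single Chebotarev set $\sP$ chosen in advance can encode the hypotheses you would need for every twist. As written, the rank non-growth of $E_i^{(Dq)}$ over $\Q(\sqrt[3]{p})$ is unproved, and it is exactly the step your argument hinges on.

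The paper sidesteps this entirely via Proposition~\ref{prop: GFP 3.3}: with $F=\Q(\sqrt[3]{p})$, $K=\Q(\sqrt{Dq})$, $L=FK$, one applies \ref{Thm:Shlapentokh} not to $E_i$ but to the twisted curve $E_i^{(Dq)}$ viewed over $F$. Indeed $\rank E_i^{(Dq)}(L)=\rank E_i^{(Dq)}(F)+\rank E_i(F)=\rank E_i^{(Dq)}(F)\geq\rank E_i^{(Dq)}(\Q)=1>0$, where the middle equality uses only $\rank E_i(F)=0$ (which your set $\sP$ does provide); no control of the twist's rank over the cubic field is needed. This gives $L/F$ integrally Diophantine, and one concludes by the \ref{Thm:transitivity} together with the fact that $F=\Q(\sqrt[3]{p})$ has exactly one complex place, rather than through the totally real quadratic subfield. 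Replacing your final step by this argument (i.e., only requiring $\rank E_i(\Q(\sqrt[3]{p}))=0$ and $\rank E_i(\Q(\sqrt{Dq}))>0$ for a single $i$) repairs the proof; the ``main obstacle'' you identify is not actually an obstacle in the correct formulation.
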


If one were to work with a large number of auxiliary elliptic curves, one should get higher density sets $\sP$ that get arbitrarily close to $1$ with the number of curves, at the expense of lower density sets $\sQ$ and $\sQ_D$ -- see Remark \ref{Remark:future} for a brief discussion and Table \ref{densities} for the densities of $\sP$.
It would be very interesting to find such curves, for which significantly more computing power is certainly needed.
It would be interesting to solve the following problem.

\begin{problem}\label{problem}
Find a method that generates infinitely many such auxiliary curves.
\end{problem}

An affirmative answer to this problem would imply that the analogue of $\sP$ would have density $1$, while our current method of building $\sQ$ could potentially give a set of density $0$.
\begin{problem}\label{further-problem}
If the answer to Problem~\ref{problem} is affirmative, would it be possible to ensure that the resulting $\sQ$ is non-empty? Or even have positive density?
\end{problem}
See Remark~\ref{Remark:future} for further discussion related to Problems~\ref{problem} and \ref{further-problem}.

Finally, we fix a congruent number elliptic curve as our choice of auxiliary elliptic curve and adopt the techniques developed in \cite{GFP} to prove the following result.

\begin{lthm}[{Theorem~\ref{thm:cong}}]\label{thmD}
There is an explicit Chebotarev set of primes $\sP_\con$ with density $\frac{11}{16}$ such that \ref{H10} is unsolvable for the ring of integers of $L=\Q(\sqrt[3]{p},\sqrt{q})$ whenever $q$ is a  congruent number.
\end{lthm}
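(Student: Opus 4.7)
The plan is to apply the general strategy of Section~\ref{section: strategy} with auxiliary elliptic curve
\[
E = E_1\colon y^2 = x^3 - x,
\]
the congruent number elliptic curve with $n=1$. Classically $\rank E_1(\QQ)=0$, so the base case is immediate.

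The $\QQ(\sqrt{q})$-direction is essentially free. By definition, $q$ is a congruent number precisely when the quadratic twist $E_q\colon y^2 = x^3 - q^2 x$ has positive Mordell--Weil rank over $\QQ$, and $E_q$ is the twist of $E_1$ by $q$. Hence for every congruent $q$,
\[
\rank E_1(\QQ(\sqrt{q})) \;=\; \rank E_1(\QQ) + \rank E_q(\QQ) \;\geq\; 1.
\]
In contrast to the proofs of Theorems~\ref{thmA}--\ref{thmC}, there is therefore no need to invoke Kriz--Li \cite{krizli} to manufacture a Chebotarev set of rank-one twists; this is why the statement of Theorem~\ref{thmD} imposes only ``$q$ is a congruent number'' in place of a density condition on $q$.

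For the $\QQ(\sqrt[3]{p})$-direction, I would feed $E_1$ into the Selmer vanishing Theorem~\ref{thm:growth-rank} to carve out the Chebotarev set $\sP_\con$ of primes $p$ for which $\rank E_1(\QQ(\sqrt[3]{p})) = 0$. For $p \in \sP_\con$ and any congruent $q$, combining the two inputs gives
\[
0 \;=\; \rank E_1(\QQ(\sqrt[3]{p})) \;<\; \rank E_1(L), \qquad L = \QQ(\sqrt[3]{p},\sqrt{q}),
\]
and Shlapentokh's theorem (\ref{Thm:Shlapentokh}) upgrades this rank jump to the unsolvability of \ref{H10} over $\cO_L$.

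The only substantive step is therefore the density computation for $\sP_\con$. This reduces to enumerating Frobenius classes in the compositum of the (here trivial) $2$-division field $\QQ(E_1[2])$, the CM field $\QQ(i)$, and the splitting field of $T^3 - p$, intersected with the local conditions at the additive place $2$ built into Theorem~\ref{thm:growth-rank}. The main obstacle is this book-keeping. The payoff of choosing the congruent number curve is that $E_1[2] \subset E_1(\QQ)$ is fully rational and $E_1$ has CM by $\ZZ[i]$, so several of the Chebotarev constraints that held the density to $\tfrac{9}{16}$ in Theorems~\ref{thmA}--\ref{thmB} dissolve, and a direct count is expected to give the improved density $\tfrac{11}{16}$. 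The remaining ingredients---the Selmer bound itself and the appeal to Shlapentokh---transport essentially unchanged from the corresponding arguments of \cite{GFP} and of the earlier single-curve proofs in this paper.
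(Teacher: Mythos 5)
Your overall architecture coincides with the paper's: the same curve $y^2=x^3-x$, the observation that the $\QQ(\sqrt{q})$-direction is free from the very definition of a congruent number (so no Kriz--Li input is needed, exactly as in the paper), Theorem~\ref{thm:growth-rank} for the cubic direction, and the Diophantine machinery at the end. The genuine gap is that the quantitative heart of the statement, the density $\frac{11}{16}$ of $\sP_{\con}$, is only asserted as ``expected,'' and the computation you sketch is aimed at the wrong objects. The density of $\sP(E,3)$ coming out of Theorem~\ref{thm:growth-rank} is the Chebotarev proportion $\#H_{E,3}/\#G_{E,3}$ computed inside the mod-$3$ image $G_{E,3}=\Gal(\QQ(E[3])/\QQ)$; neither the $2$-division field $\QQ(E[2])$, nor the splitting field of $T^3-p$, nor any local condition at the additive place $2$ enters this count (the Selmer, Tamagawa and non-anomaly conditions are hypotheses on the fixed curve, verified computationally, not congruence conditions on $p$). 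Since $E$ has CM by $\ZZ[i]$ and $a_3(E)=0$, the image of $\bar\rho_{E,3}$ is a non-surjective group of order $16$ (the normalizer of a nonsplit Cartan subgroup of $\GL_2(\F_3)$), and the paper's Lemma~\ref{lem:H/G} proves the density by listing these $16$ matrices and checking that exactly $11$ lie in $H_{E,3}$. Your heuristic that constraints ``dissolve'' because $E[2]$ is rational is not what produces the improvement: rational $2$-torsion is irrelevant to $\sP$ (it would only matter for the Kriz--Li route you correctly avoid), and the jump from $\frac{9}{16}$ in Lemma~\ref{lem:proportion} to $\frac{11}{16}$ is simply the proportion of favourable elements in this smaller image. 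Without that explicit count, and without verifying the hypotheses of Theorem~\ref{thm:growth-rank} for $32a2$ (e.g.\ $\Sel_3(E/\QQ(\mu_3))=0$), the stated density is not established.

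A secondary, fixable, flaw is the final step. \ref{Thm:Shlapentokh} requires $\rank E(L_2)=\rank E(L_1)>0$, i.e.\ a positive rank that does \emph{not} jump, so it cannot be applied to the inequality $0=\rank E_1(\QQ(\sqrt[3]{p}))<\rank E_1(L)$ as you wrote it. The correct deduction is Proposition~\ref{prop: GFP 3.3} with $F=\QQ(\sqrt[3]{p})$ and $K=\QQ(\sqrt{q})$ (internally this applies Shlapentokh to the twist $E^{(q)}$ over $F$, whose rank is positive and unchanged in $L$), followed by the facts that $\QQ(\sqrt[3]{p})/\QQ$ is integrally Diophantine (one complex place) and the \ref{Thm:transitivity} together with the \ref{Thm:Matiyasevich}; this is exactly how the paper concludes, by the same argument as Theorem~\ref{thm: improve P and Q of GFP}. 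Your inputs, $\rank E_1(F)=0$ and $\rank E_1(K)>0$, are the right ones, so this step is a misstatement rather than a wrong idea; the missing Lemma~\ref{lem:H/G}-style computation is the substantive gap.
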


The congruent number problem and Goldfeld's conjecture predict that the set of primes $q$ in the statement of Theorem~\ref{thmD} should be $\frac{1}{2}$.
The recent tremendous progress on these profound problems made by D.~Kriz \cite{kriz} and A.~Smith \cite{smith} sheds important new light on this set.
\subsection*{Organization}
Including the introduction, there are seven sections and three appendices.
In the preliminary Section~\ref{section: prelim}, we record facts about \ref{H10} and Selmer groups.
In Section~\ref{section: strategy} we outline the strategy of using elliptic curves for proving that \ref{H10} is unsolvable in rings of integers of certain number fields.
In Section~\ref{section: improving GFP with one curve and two fields} we consider the setting of García-Fritz--Pasten's Theorem~\ref{thm:GFP} and provide improvements on the proportions.

In Sections~\ref{set Q one curves two K negative disc} and \ref{section: pair of ec}, we deviate from the setting of \cite{GFP}.
We work with auxiliary elliptic curves with negative minimal discriminant, allowing us to prove the insolubility in rings of integers of number fields disjoint from those considered in \cite{GFP}.
In the final Section~\ref{section: Congruent Number Curves}, we approach the problem with congruent number elliptic curves.
This curve was not covered by the methods of \cite{GFP} because of difficult behaviour at the prime $3$.
More precisely, it has \emph{supersingular} reduction at the prime $3$ with non-surjective image of the mod $3$ residual representation.

The appendices contain a MAGMA code provided to us by Harris B.~Daniels and the SAGE codes which were used for verifying various hypotheses in this article.

\section*{acknowledgements}
This project was initiated as a part of the IAS Summer Collaboration Program (2022).
We thank the Institute for Advanced Study for their hospitality and financial support.
We thank Harris B.~Daniels for sharing with us his code on computing joint images of Galois representations attached to two elliptic curves (see Appendix~\ref{app:Harris}).
We also thank John Cremona, Henri Darmon, Tim Dokchitser, Natalia García-Fritz, Daniel Kriz, Marc Masdeu, Kumar Murty, Robert Pollack, Karl Rubin, and Chris Skinner for answering many of our questions and their helpful comments during the preparation of this article.
DK is supported by a PIMS Postdoctoral Fellowship.
AL is supported by the NSERC Discovery Grants Program RGPIN-2020-04259 and RGPAS-2020-00096.
FS is supported by NSF grant 2001280 and Simons Grant 635320.

\section{Preliminaries}
\label{section: prelim}

\subsection{Facts related to \texorpdfstring{\ref{H10}}{}}
We record definitions and properties required for our purposes.
For further details we refer the reader to \cite[Chapters~5 and 7]{murty2019hilbert}.

\begin{definition}
\label{defn: Diophantine over a ring}
Let $R$ be a commutative unitary ring and let $n$ be a given positive integer. We say a set $S\subseteq R^n$ is \emph{Diophantine over $R$} if there exists a positive integer $m$ and a polynomial $P(x_1,\ldots,x_n,y_1,\ldots,y_m)$ with coefficients in $R$ such that $(a_1, \ldots, a_n)$ is in $S$ if and only if there exist elements $b_1,\ldots,b_m$ of $R$ for which $P(a_1,\ldots,a_n,b_1,\ldots,b_m) = 0.$ That is,
\[
(a_1, \ldots, a_n)\in S \Longleftrightarrow (\exists b_1, \ldots, b_m)(P(a_1, \ldots, a_n, b_1, \ldots, b_m)) =0 \quad
\]
When $n=1$, the set $S$ is said to be \emph{Diophantine in $R$}.
\end{definition}

\begin{example}\leavevmode
\begin{enumerate}
\item The set of non-negative integers $a\in \mathbb{Z}_{\geq 0}$ is Diophantine in $\mathbb{Z}$:
\[
a\in \mathbb{N} \Longleftrightarrow (\exists b)(a-b = 0)\Longleftrightarrow (\exists b_1,b_2,b_3,b_4)(a-b_1^2-b_2^2-b_3^2-b_4^2 = 0).
\]
\item Any finite set $S= \{a_1, \ldots, a_r\} $ is Diophantine: \[
a\in S \Longleftrightarrow (a-a_1)\ldots (a-a_r)=0.
\]
\item The set of composite numbers is Diophantine:
\[
a \text{ is a composite number } \Longleftrightarrow f(a, y_1, y_2) = a- (y_1 + 2)(y_2 + 2).
\]
\item If $S_1$ and $S_2$ are Diophantine, then $S_1 \cup S_2$ and $S_1\cap S_2$ are also Diophantine.
\end{enumerate}
\end{example}

\begin{definition}
\label{def: Integrally Diophantine}
Let $L_2/L_1$ be an extension of number fields.
If $\cO_{L_1}$ is Diophantine in $\cO_{L_2}$, then $L_2/L_1$ is said to be \emph{integrally Diophantine}.
\end{definition}

The following result tells us about the Diophantine relationships between algebraic number fields.
\begin{labelledconj}{Transitive Property}{Thm:transitivity}
If $L_3/L_2/L_1$ is a tower of number fields and both $L_2/L_1$
and $L_3/L_2$ are integrally Diophantine, then so is $L_3/L_1$.
\end{labelledconj}

\begin{proof}
See \cite[Theorem~2.1]{SS89}.
\end{proof}

\begin{labelledconj}{Shlapentokh's Theorem}{Thm:Shlapentokh}
Let $L_2/L_1$ be an extension of number fields.
Suppose that there is an elliptic curve $E/L_1$ such that $\rank E(L_2)=\rank E(L_1)>0$.
Then $L_2/L_1$ is integrally Diophantine.
\end{labelledconj}

\begin{proof}
See \cite{Shlapen}.
\end{proof}


\subsection{Facts about Selmer groups of elliptic curves}
\label{S: Selmer groups}
Throughout this article, $\ell$ is an odd prime number and $E$ is an elliptic curve defined over $\Q$ with good reduction at $\ell$.

Given a module $A$ over the absolute Galois group $G_{L}$, and $i\geq 0$, the cohomology group $H^i(L, A)$ is defined to be the discrete cohomology group $H^i(G_{L}, A)$.

The \emph{$\ell$-Selmer group} of $E$ over $L$ is defined as the kernel of the following restriction map
\[
\Sel_{\ell}(E/L):=\ker\left( H^1\left(L,E[\ell]\right)\longrightarrow \prod_{w} H^1\left(L_w, E(\overline{L_w}) \right)[\ell]\right).
\]
An equivalent definition of the $\ell$-Selmer group given in \cite[\S~1 Corollary~6.6]{Milne} is the following: let $S$ be a finite set of primes $L$ containing all the archimedean primes, the primes above $\ell$, and the primes where $E$ has bad reduction.
Then
\[
\Sel_{\ell}(E/L):=\ker\left( H^1\left(L_S/L,E[\ell]\right)\longrightarrow \prod_{w\in S} H^1\left(L_w, E(\overline{L_w}) \right)[\ell]\right).
\]
Here, $L_S/L$ is the maximal extension of $L$ unramified outside the set $S$.

Now we record a result of J.~Brau crucial for our results.
Let $r$ be a prime of semi-stable reduction and $r^s \pdiv a$, i.e., $a = r^s d$ such that $\gcd(d,r)=1$.
Since $r$ is semi-stable, we can write $j = r^{-n} \frac{a}{b}$ (where $a,b$ are coprime to $r$).
Define
\[
u_{r,a} = d^n \left(\frac{a}{b}\right)^s.
\]

\begin{theorem}
\label{thm: brau}
Let $\ell$ be an odd prime and $E/\Q$ be an elliptic curve with good reduction at $\ell$.
Let $k=\Q(\mu_{\ell})$ and suppose that $\Sel_{\ell}(E/k)$ is trivial.
Consider the family of $\ell$-extensions $k_{a} = k(\sqrt[\ell]{a})$.
Write $S$ to denote the set of primes containing the primes above $\ell$, the primes of bad reduction of $E$, the primes which ramify in $k_a/k$, and the archimedean primes.
Denote by $G_v$ the Galois group $\Gal(k_{a,w}/k_v)$ (where $w$ is a prime of $k_a$ lying above $v$) and set $\delta_v = \dim_{\F_\ell} H^1\left(G_v, E(k_{a,w}) \right)$.
Denote by $c_v$ is the Tamagawa number of $E/k_v$ and by $q_v$ the size of the residue field of $k_v$.
Then
\[
\dim_{\F_{\ell}}\Sel_{\ell}\left( E/k_a\right)^{\Gal(k_a/k)} = \sum_{v\in S} \delta_v.
\]
The values of $\delta_v$ (when $v$ is not a prime of additive reduction) are given in the following table:

\noindent
\begin{tabular}{@{\vrule width 1pt\ }l|c|c|c@{\ \vrule width 1pt}}
\noalign{\hrule height 1.2pt}
Reduction type of $E$ at $v$ & $v$ ramified in $k_{a}/k$ & $v$ inert in $k_{a}/k$ & $v$ split in $k_{a}/k$ 
\\
\noalign{\hrule height 1.2pt}
good ordinary, $v \mid \ell$ & $
\delta_v=
\begin{cases}
1\text{ or } 2 & \text{if $\tilde{E}(\kappa_v)[\ell]\neq 0$} \\
0 & \text{otherwise}
\end{cases}
$ & \multicolumn{2}{@{\ }c@{\ \vrule width 1.2pt}}{
$\delta_v = 0$
} \cr
\hline
good supersingular, $v \mid \ell$ & $
\delta_v =
\begin{cases}
\ell-2 & \text{if $\ell\mid a$} \\
0 & \text{otherwise}
\end{cases}
$ & \multicolumn{2}{@{\ }c@{\ \vrule width 1.2pt}}{
$\delta_v = 0$
} \cr
\hline
good, $v \nmid \ell$ & $\delta_v = \dim_{\mathbb{F}_\ell} 	\tilde{E}(\kappa_v)[\ell]$ & \multicolumn{2}{@{\ }c@{\ \vrule width 1.2pt}}{
$\delta_v = 0$
} \cr
\hline
split multiplicative & $\delta_v = \begin{cases}
1 & \text{if $u_{r,a}^{\frac{q_v-1}{\ell}} \equiv 1 \mod r$}\\
0 & \text{otherwise}
\end{cases}
$
& $\delta_v = \begin{cases}
1 & \text{if $\ell \mid c_v$} \\
0 & \text{otherwise}
\end{cases}
$ & $\delta_v = 0$ \cr
\hline
non-split multiplicative & \multicolumn{3}{@{\ }c@{\ \vrule width 1.2pt}}{
$\delta_v = 0$
}\cr
\hline
\end{tabular}
\end{theorem}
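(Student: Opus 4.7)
The plan is to compute $\Sel_\ell(E/k_a)^G$ where $G:=\Gal(k_a/k)$ by combining the inflation--restriction sequence with a case-by-case analysis of the local terms. Apply the five-term exact sequence of Hochschild--Serre to $G$ acting on $E[\ell]$:
\[
0\to H^1(G, E[\ell]^{G_{k_a}})\to H^1(k, E[\ell])\to H^1(k_a, E[\ell])^G\to H^2(G, E[\ell]^{G_{k_a}}).
\]
Because $|G|=\ell$ is coprime to $\#E(\overline{\Q})[\ell]$-torsion issues that matter globally (and because $k$ already contains $\mu_\ell$, so nothing of $E[\ell]$ grows in the tower that is relevant modulo $\ell$), the inflation and obstruction terms contribute nothing to the invariant Selmer group beyond what is visible locally. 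One then takes the corresponding $G$-invariants of the defining exact sequence of $\Sel_\ell(E/k_a)$ and compares with $\Sel_\ell(E/k)$ by a diagram chase.

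Next, pull back the Selmer conditions one prime at a time. For each place $v$ of $k$ in the set $S$ and a place $w\mid v$ of $k_a$, the local restriction fits into the exact sequence
\[
0\to H^1(G_v, E(k_{a,w})[\ell])\to H^1(k_v, E[\ell])\to H^1(k_{a,w}, E[\ell])^{G_v}\to H^2(G_v, E(k_{a,w})[\ell]),
\]
and compatibility with the Kummer local conditions $E(k_v)/\ell\hookrightarrow H^1(k_v, E[\ell])$ identifies the obstruction to a class at $v$ extending to a Selmer class on $k_a$ precisely with $H^1(G_v, E(k_{a,w}))$, whose $\F_\ell$-dimension is by definition $\delta_v$. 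Invoking the hypothesis $\Sel_\ell(E/k)=0$ kills the global contribution, yielding
$\dim_{\F_\ell}\Sel_\ell(E/k_a)^G=\sum_{v\in S}\delta_v$, as asserted.

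The remaining work is to compute $\delta_v$ in each case of the table. When $v\nmid\ell$ has good reduction, $E(k_v)$ is $\ell$-divisible up to the torsion $\tilde{E}(\kappa_v)[\ell]$, so $H^1(G_v,E(k_{a,w}))\cong H^1(G_v, \tilde{E}(\kappa_v)[\ell])$, and Herbrand-quotient/periodicity for cyclic $G_v$ yields $\tilde E(\kappa_v)[\ell]$ when $v$ ramifies and vanishes otherwise (ramification forces $G_v$ to act trivially on the residue field torsion). The multiplicative-reduction cases use the Tate parametrization $E(\overline{k_v})\cong\overline{k_v}^{\,\times}/q_E^{\Z}$: then $H^1(G_v,E(k_{a,w}))$ becomes a Kummer-theoretic cohomology whose vanishing is controlled by whether the uniformizer-valued quantity $u_{r,a}$ is an $\ell$-th power modulo $r$ in the ramified split case, by $\ell\mid c_v$ in the inert split case (via the component group), and trivially by the non-split structure otherwise. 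For $v\mid \ell$ of good ordinary reduction, the connected--\'etale sequence decomposes $E[\ell]$ into a multiplicative and an \'etale part, and the two-dimensional possibilities $1$ or $2$ in the table record whether both pieces or only the \'etale piece contributes.

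The main obstacle is the supersingular case at $v\mid\ell$: here $E[\ell]$ is an irreducible crystalline $G_{k_v}$-module with no convenient filtration, so $\delta_v$ must be computed through the formal group. One analyzes $\hat E(\mathfrak m_{k_{a,w}})$ and its $G_v$-cohomology; the split versus ramified dichotomy corresponds to whether $k_{a,w}/k_v$ is trivial (giving $\delta_v=0$) or totally ramified of degree $\ell$ (so that $\hat E(\mathfrak m_{k_{a,w}})/\ell$ contributes $\ell-2$, the discrepancy from $\ell$ being accounted for by the two \'etale-at-the-boundary classes). The condition $\ell\mid a$ precisely distinguishes these cases because $k_a/k$ is (up to conductor) ramified at $\ell$ exactly then. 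Once these local dimensions are in hand, the table is assembled and the theorem follows.
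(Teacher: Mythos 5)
The paper does not actually reprove this statement: its ``proof'' is a citation to Brau \cite{Bra14} (Proposition~5.2 for the displayed formula and Theorem~1.1 for the table), together with a clarification of which primes must be placed in $S$. Your proposal attempts a genuine proof, but it has gaps at exactly the decisive points. For the formula $\dim_{\F_\ell}\Sel_{\ell}(E/k_a)^{G}=\sum_{v\in S}\delta_v$, your inflation--restriction argument plus the hypothesis $\Sel_\ell(E/k)=0$ only yields an injection $\Sel_\ell(E/k_a)^{G}\hookrightarrow\bigoplus_{v\in S}H^1\bigl(G_v,E(k_{a,w})\bigr)[\ell]$, i.e.\ the inequality ``$\le$''. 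Equality also requires surjectivity: every tuple of local classes must be realized by a global class in $H^1(k_S/k,E[\ell])$ whose restriction to $k_a$ satisfies all Selmer conditions. That is not a formal diagram chase; it needs global duality (Cassels/Poitou--Tate), the self-duality of $E[\ell]$ under the Weil pairing, and once more the vanishing of $\Sel_\ell(E/k)$ to kill the relevant cokernel of the localization map. Nothing in your write-up plays this role, so the main assertion is not established. In addition, your reason for discarding the terms $H^i(G,E(k_a)[\ell])$ is wrong: $|G|=\ell$ is certainly not coprime to the order of the $\ell$-torsion module. The correct argument is that $\Sel_\ell(E/k)=0$ forces $E(k)[\ell]=0$, and an $\ell$-group acting on a nonzero finite $\ell$-group has nonzero fixed points, whence $E(k_a)[\ell]=0$.

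On the local table, your sketches for good primes $v\nmid\ell$ and for the multiplicative rows point in the right direction (Hilbert~90, the Tate parametrization and local class field theory would still have to be carried out to produce the congruence on $u_{r,a}$ and the criterion $\ell\mid c_v$), but the supersingular entry is treated incorrectly. You assert that the prime above $\ell$ ramifies in $k_a/k$ precisely when $\ell\mid a$; this is false (for instance $\Q(\mu_3,\sqrt[3]{2})/\Q(\mu_3)$ is ramified above $3$ although $3\nmid 2$) and it contradicts the very shape of the table, which inside the \emph{ramified} column distinguishes $\delta_v=\ell-2$ when $\ell\mid a$ from $\delta_v=0$ when $\ell\nmid a$. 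The value $\ell-2$ is asserted rather than computed; that formal-group cohomology computation (and likewise the dichotomy $1$ or $2$ in the ordinary case) is precisely the nontrivial content of Brau's Theorem~1.1, so these entries remain unproved in your proposal.
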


\begin{proof}
The first assertion of the theorem is \cite[Proposition 5.2]{Bra14}.
We note that the description of the set $S$ is not clearly mentioned in \emph{op. cit.}: a partial description of the set $S$ is included in Proposition~2.1 of \emph{op. cit.}
We further need to include the primes which ramify in $k_a/k$ to ensure that $k_a$ is a finite extension of $k$ contained in $k_S$.
The values of $\delta_v$ at non-additive primes are given in the statement of Theorem~1.1 of \emph{op. cit.}
\end{proof}

\section{Strategy}
\label{section: strategy}

In this section, we outline the strategy of the proofs of Theorems~\ref{thmA}-\ref{thmD}.
Our approach is inspired by the techniques developed in \cite{GFP} but is more direct and free of Iwasawa theoretic arguments.
The following consequence of \ref{Thm:Shlapentokh} plays a key role.
\begin{proposition}
\label{prop: GFP 3.3}
Let $F/\Q$ be any number field and $K/\Q$ be a quadratic extension.
Consider the compositum $L=F\cdot K$.
If there is an elliptic curve $E/\Q$ satisfying 
\begin{enumerate}[\textup{(}i\textup{)}]
\item $\rank E(F)=0$ and
\item $\rank E(K)>0$,
\end{enumerate}
then $L/F$ is integrally Diophantine, i.e., $\cO_F$ is Diophantine in $\cO_L$.
\end{proposition}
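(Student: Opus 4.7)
The plan is to reduce the proposition to \ref{Thm:Shlapentokh} by producing an auxiliary elliptic curve over $F$ whose rank is positive and does not grow upon base change to $L$. The curve $E$ itself is not a candidate since $\rank E(F) = 0$; the natural replacement is the quadratic twist $E^K$ of $E$ by $K$.

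First I would observe that the hypotheses $\rank E(F) = 0$ and $\rank E(K) > 0$ force $K \not\subseteq F$, since otherwise $E(K) \subseteq E(F)$ would have rank zero. Hence $[L:F] = 2$ and $L/F$ is a genuine quadratic extension, with $\Gal(L/F)$ generated by the restriction of the non-trivial element of $\Gal(K/\Q)$.

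The second step is to apply the standard rank decomposition in a quadratic extension. Used for $L/F$ (over $F$) and for $K/\Q$ (over $\Q$) respectively, it gives
\begin{align*}
\rank E(L) &= \rank E(F) + \rank E^K(F), \\
\rank E(K) &= \rank E(\Q) + \rank E^K(\Q).
\end{align*}
Since $\rank E(\Q) \leq \rank E(F) = 0$, the second equality yields $\rank E^K(\Q) = \rank E(K) > 0$, and hence $\rank E^K(F) \geq \rank E^K(\Q) > 0$. The first equality then gives $\rank E(L) = \rank E^K(F)$, and because $E$ and $E^K$ become isomorphic over $L$ we conclude that
\[
\rank E^K(L) = \rank E(L) = \rank E^K(F) > 0.
\]

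Thus $E^K/F$ is an elliptic curve of positive Mordell--Weil rank over $F$ whose rank is unchanged over $L$, and \ref{Thm:Shlapentokh} applied to the extension $L/F$ with this curve immediately yields that $L/F$ is integrally Diophantine. There is no serious obstacle in this argument: the only ingredient beyond \ref{Thm:Shlapentokh} is the elementary rank decomposition for a quadratic twist, which follows from a short Galois-cohomological computation, and the observation that twisting commutes with restriction of scalars along $\Q \hookrightarrow F$.
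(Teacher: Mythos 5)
Your argument is correct and is essentially the proof the paper relies on: the paper defers to \cite[Proposition~3.3]{GFP}, which likewise deduces the statement from \ref{Thm:Shlapentokh} by passing to the quadratic twist of $E$ by $K$ and using the rank decomposition $\rank E(L)=\rank E(F)+\rank E^{K}(F)$ together with $E\cong E^{K}$ over $L$. No gaps: your preliminary observation that $K\not\subseteq F$ (so $[L:F]=2$) and the inequality $\rank E^{K}(F)\geq\rank E^{K}(\Q)=\rank E(K)>0$ are exactly what is needed.
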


\begin{proof}
See \cite[Proposition~3.3]{GFP}.
\end{proof}

If we further know that $F/\Q$ is integrally Diophantine (i.e., $\Z$ is Diophantine in $\cO_F$), then by the \ref{Thm:transitivity}, $L/\Q$ is also integrally Diophantine.
The following argument is standard, but we repeat it for the reader's convenience.

\medskip
\noindent \emph{Claim:} If $\Z$ is Diophantine in $\cO_L$ (Definition~\ref{defn: Diophantine over a ring}), then the analogue of \ref{H10} for $\cO_L$ has a negative solution.

\medskip
\noindent \emph{Justification:}
Let $P$ be the polynomial in $\cO_{L}[X_i, Y_{j,j'}]_{1\leq i,j\leq n, 1\leq j'\leq l}$
that shows that $\Z$ is Diophantine in $\cO_L$ and let $f_1, \ldots, f_m\in \Z[X_1, \ldots, X_n]$.
By Definition~\ref{defn: Diophantine over a ring}, the collection of polynomials 
\[
f_1, \ldots, f_m, P(X_1, Y_{1,1}, \ldots, Y_{1,k}), \ldots, P(X_n, Y_{n,1}, \ldots, Y_{n,k})\]
is solvable in $\cO_L$ precisely if the polynomials $f_1, \ldots, f_m$ are solvable in $\Z$.
In other words, if we are able to decide the solubility of polynomials over $\cO_L$, then we would be able to decide the solubility of $f_1, \ldots, f_m$ over $\Z$.
This is a contradiction to \ref{Thm:Matiyasevich}, completing the justification.

Thus, the tasks at hand are the following:
\begin{enumerate}
\item Find a rank 0 elliptic curve $E/\Q$ and two families of number fields:
\begin{itemize}
\item one family such that rank of $E/\Q$ does not jump in the number fields.
\item another family of (quadratic) extensions such that rank does jump.
\end{itemize}
\item Determine how big these families are.
\end{enumerate}

\subsection{Step 1: Rank Stabilization in cubic extensions}

Throughout this section, $E/\Q$ is an elliptic curve with good reduction at a fixed prime $\ell$.
Consider the Galois group $G_{E,\ell}:=\Gal(\Q(E[\ell])/\Q)$, and note that $G_{E,\ell}$ may be viewed as a subgroup of ${\GL}_2(\Z/\ell\Z)$ via the residual representation 
\[
\bar{\rho}_{E,\ell}:G_{E,\ell}\hookrightarrow {\GL}_2(\Z/\ell\Z).
\]
Assume that $p$ is a prime coprime to the conductor of $E$, i.e., $p$ is unramified in $\Q(E[\ell])$.
Let $\sigma_p\in G_{E,\ell}$ be the Frobenius at $p$.
The trace and determinant of $\bar{\rho}(\sigma_p)$ are as follows
\begin{align*}
\trace\ \bar{\rho}_{E,\ell}(\sigma_p)&=a_p(E)=p+1-\# \widetilde{E}(\F_p),\\
\det\bar{\rho}_{E,\ell}(\sigma_p)&=p.
\end{align*}
For a prime $v|p$ of $\QQ(\mu_\ell)$, let $f$ be the integer such that the residue field of $\QQ(\mu_\ell)$ at $v$ is given by $\kappa_v=\F_{p^f}$ (note that $f$ is independent of the choice of $v$).
According to a formula of A.~Weil (see \cite[Theorem~V.2.3.1]{silverman2009}),
\[
 \#E(\kappa_v)=p^f+1-\alpha^f-\beta^f\equiv 2-\alpha^f-\beta^f\mod{\ell},
\]
where $\alpha$ and $\beta$ are the eigenvalues of $\bar{\rho}_{E,\ell}( \sigma_p)$.

\begin{definition}\label{def:HEl}
For $g\in G_{E,\ell}$, denote by $f(g)$ the smallest positive integer $f$ satisfying 
\begin{equation}
\label{eqn: det condition}
{\det}\bar{\rho}_{E,\ell}(g)^f=1.
\end{equation}
Define the set $H_{E,\ell}$ to consist of all $g\in G_{E,\ell}$ such that the eigenvalues $\alpha,\beta\in \overline{\F}_\ell$ of $\bar{\rho}(g)$ satisfy
\begin{equation}
\label{eqn: condn for HEl}
\alpha^{f(g)}+\beta^{f(g)}\neq 2.
\end{equation}
\end{definition}

Note that \eqref{eqn: det condition} implies that $(\alpha\beta)^{f(g)}=1$.
Therefore, the condition \eqref{eqn: condn for HEl} is equivalent to $\alpha^{f(g)}\neq 1$.

\begin{definition}
\label{defi: Chebotarev set}
A set of primes $\mathcal{S}$ is called a \emph{Chebotarev set} if there
is a Galois extension $K/\Q$ and a conjugacy-stable set $\cC \subseteq \Gal(K/\Q)$ such that up to a finite set, $\mathcal{S}$ agrees with
the set $\{p\colon \Frob_p \in \cC\}$.

Finite unions, finite intersections, and complements of Chebotarev sets are again Chebotarev.
\end{definition}

The Chebotarev density theorem states that if $\mathcal{S}$ arises from $K$ and $\cC$ as above, then the \emph{density of $\mathcal{S}$} defined as the limit
\[
\lim_{x\rightarrow\infty} \frac{\# \mathcal{S} \cap [1,x]}{\pi(x)}
\]
exists and is equal to $\#\cC/[K : \Q]$.
Here, we have used the standard notation $\pi(x)$ to denote the number of prime numbers $\leq x$.

\begin{lemma}
\label{Hep over Gep lemma}
For a prime $p\neq\ell$, let $v$ be a prime of $\Q(\mu_\ell)$ above $p$, and $\kappa_v$ be the residue field at $v$.
The 
density of primes $p$ coprime to the conductor of $E$ so that $E(\kappa_v)[\ell]=0$ is $\left(\frac{\# H_{E,\ell}}{\# G_{E,\ell}}\right)$.
\end{lemma}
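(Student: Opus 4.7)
The plan is to translate the condition $E(\kappa_v)[\ell]=0$ into a condition on the Frobenius conjugacy class of $\sigma_p$ inside $G_{E,\ell}$, and then apply the Chebotarev density theorem.

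First I would identify the integer $f$ in the statement with the integer $f(\sigma_p)$ from Definition~\ref{def:HEl}. Since $p\neq\ell$, the prime $p$ is unramified in $\Q(\mu_\ell)/\Q$, and the residue degree of any prime of $\Q(\mu_\ell)$ above $p$ equals the multiplicative order of $p$ modulo $\ell$; thus $\kappa_v=\F_{p^f}$ with $f$ equal to this order. On the Galois side, the determinant formula $\det\bar\rho_{E,\ell}(\sigma_p)=p$ means that $f(\sigma_p)$ — the least positive integer with $\det\bar\rho_{E,\ell}(\sigma_p)^{f(\sigma_p)}=1$ — is also the order of $p$ in $(\Z/\ell\Z)^\times$. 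So $f=f(\sigma_p)$.

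Next I would apply A.~Weil's congruence recalled just before Definition~\ref{def:HEl}: letting $\alpha,\beta\in\overline{\F}_\ell$ be the eigenvalues of $\bar\rho_{E,\ell}(\sigma_p)$,
\[
\#\widetilde{E}(\kappa_v)\equiv 2-\alpha^f-\beta^f\pmod\ell.
\]
Since $E$ has good reduction at every prime $p$ coprime to the conductor of $E$, and since $\widetilde E(\kappa_v)[\ell]=0$ if and only if $\ell\nmid\#\widetilde E(\kappa_v)$, we see that
\[
E(\kappa_v)[\ell]=0\;\Longleftrightarrow\;\alpha^{f(\sigma_p)}+\beta^{f(\sigma_p)}\not\equiv 2\pmod\ell,
\]
which is precisely the condition \eqref{eqn: condn for HEl} defining membership of $\sigma_p$ in $H_{E,\ell}$.

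Finally I would note that the conditions defining $H_{E,\ell}$ depend only on the characteristic polynomial, hence only on the conjugacy class, of $\bar\rho_{E,\ell}(g)$; so $H_{E,\ell}$ is a union of conjugacy classes in $G_{E,\ell}$. After removing the finitely many primes dividing the conductor of $E$ and the prime $\ell$, the Chebotarev density theorem applied to the Galois extension $\Q(E[\ell])/\Q$ gives that the density of primes $p$ with $\sigma_p\in H_{E,\ell}$ is $\#H_{E,\ell}/\#G_{E,\ell}$, which completes the proof. The argument is mostly bookkeeping — matching up $f$ on both sides is the one place requiring a moment's care, but it is immediate from the determinant of $\bar\rho_{E,\ell}$ being the mod-$\ell$ cyclotomic character.
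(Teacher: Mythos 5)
Your proposal is correct and follows essentially the same route as the paper: the paper's proof simply notes that, by definition (via the Weil congruence and the identification of the residue degree $f$ with $f(\sigma_p)$ through $\det\bar\rho_{E,\ell}=$ the mod-$\ell$ cyclotomic character), $\sigma_p\in H_{E,\ell}$ if and only if $E(\kappa_v)[\ell]=0$, and then invokes the Chebotarev density theorem. You have merely written out explicitly the bookkeeping the paper leaves implicit, so there is nothing to correct.
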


\begin{proof}
This is \cite[Lemma~8.9]{KLR22} -- we repeat the brief proof for the convenience of the reader as we will be using it later.
By definition, $\sigma_p\in H_{E,\ell}$ if and only if $E(\kappa_v)[\ell]=0$.
The result follows from the Chebotarev density theorem.
\end{proof}

We now work towards finding a set of primes $\sP$ such that for all $p\in \sP$ the Mordell--Weil rank of $E\left(\Q(\sqrt[3]{p})\right) $ is zero.
We prove a modified version of \cite[Theorem~4.1]{GFP}.

\begin{theorem}
\label{thm:growth-rank}
Let $\ell>2$ be a prime.
Let $E/\Q$ be an elliptic curve with conductor $N$ such that
\begin{enumerate}[\textup{(}1\textup{)}]
\item $E$ has good reduction at $\ell$;
\item $\Sel_{\ell}\left(E/\Q(\mu_{\ell})\right)=0$
\item $\ell\nmid \Tam\left(E/\QQ(\mu_\ell)\right)\cdot \#\tilde E_\ell(\F_\ell)$.
\end{enumerate}
Consider the set of prime numbers
\[
\sP(E,\ell)=\{p:p\nmid N, a_v(E)\not\equiv 2\mod\ell\},
\]
where $v$ denotes any prime of $\QQ(\mu_\ell)$ lying above $p$ and $a_v(E)=1+\# \kappa_v-\#\tilde E_v(\kappa_v)$.
Then for every $\ell$-power free integer $a>1$ supported on $\sP(E,\ell)$, the Selmer group $\Sel_{\ell^\infty}\left(E/\QQ(\mu_\ell,\sqrt[\ell]a)\right)$ is trivial.
\end{theorem}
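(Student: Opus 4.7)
The plan is to reduce the vanishing of $\Sel_{\ell^\infty}(E/k_a)$, where $k=\QQ(\mu_\ell)$ and $k_a=k(\sqrt[\ell]{a})$, to a local computation via Brau's formula (Theorem~\ref{thm: brau}). Three ingredients drive the argument: (i) Brau's identity
\[
\dim_{\F_\ell}\Sel_\ell(E/k_a)^{\Gal(k_a/k)}=\sum_{v\in S}\delta_v;
\]
(ii) the classical fact that any nontrivial $\F_\ell$-representation of the cyclic $\ell$-group $\Gal(k_a/k)\cong\Z/\ell\Z$ has nonzero fixed subspace, which upgrades vanishing of invariants to vanishing of $\Sel_\ell(E/k_a)$ itself; and (iii) a Kummer-sequence bootstrap promoting $\Sel_\ell(E/k_a)=0$ to $\Sel_{\ell^\infty}(E/k_a)=0$.

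For ingredient (i), I would show that every local term $\delta_v$ in Brau's sum vanishes. Since $a$ is $\ell$-power free and supported on $\sP(E,\ell)$, the primes ramifying in $k_a/k$ are precisely the primes of $k$ above the rational prime divisors of $a$, together possibly with the prime above $\ell$; in particular, the bad primes of $E$ are unramified in $k_a/k$. At the unique prime $\lambda$ of $k$ above $\ell$, in the good ordinary case condition (3) yields $\ell\nmid\#\tilde E_\ell(\F_\ell)=\#\tilde E(\kappa_\lambda)$, hence $\tilde E(\kappa_\lambda)[\ell]=0$ and $\delta_\lambda=0$; in the good supersingular case $\ell\notin\sP(E,\ell)$ forces $\ell\nmid a$, so the table gives $\delta_\lambda=0$ directly. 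At a prime $v$ of $k$ above a rational prime $p\in\sP(E,\ell)$ dividing $a$, the Weil formula together with $\#\kappa_v\equiv 1\mod\ell$ translates the defining condition $a_v(E)\not\equiv 2\mod\ell$ into $\tilde E(\kappa_v)[\ell]=0$, so $\delta_v=0$. At a prime of bad reduction $v$ (automatically unramified in $k_a/k$), condition (3) gives $\ell\nmid c_v$, and a case check against the remaining rows of the table yields $\delta_v=0$. Archimedean primes contribute nothing since $\ell$ is odd.

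For ingredient (iii), vanishing of $\Sel_\ell(E/k_a)$ implies $E(k_a)[\ell]=0$, and the Kummer long exact sequence then identifies $\Sel_{\ell^n}(E/k_a)$ with $\Sel_{\ell^\infty}(E/k_a)[\ell^n]$; the triviality of the $\ell$-torsion of the cofinitely generated $\Z_\ell$-module $\Sel_{\ell^\infty}(E/k_a)$ forces the whole module to be trivial. The main obstacle I anticipate is the treatment of primes of \emph{additive} reduction, which lie outside the table of Theorem~\ref{thm: brau} and require a separate bound of the form $\delta_v\le\ord_\ell(c_v)$; fortunately this bound is killed by condition (3). A secondary delicate point is the precise determination of Brau's set $S$, especially the ramification of $\lambda$ in $k_a/k$ when $\ell\nmid a$, so that no local term is silently overlooked.
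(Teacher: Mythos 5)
Your overall route is the paper's: invoke Brau's formula (Theorem~\ref{thm: brau}) after using hypothesis (2), show each local term $\delta_v$ vanishes, upgrade $\Sel_{\ell}(E/k_a)^{\Gal(k_a/k)}=0$ to $\Sel_{\ell}(E/k_a)=0$ by the fixed-point property of $\ell$-groups (the paper cites \cite[Proposition~1.6.12]{NSW08}), and then pass to $\Sel_{\ell^\infty}$. Your treatment of the ramified good primes dividing $a$ (via $\#\kappa_v\equiv 1\bmod\ell$), of the prime above $\ell$ in the ordinary case, and of the multiplicative primes matches the paper; and your anticipated separate treatment of additive primes is close in spirit to what the paper actually does, namely using the isomorphism $\Sel_{\ell}(E/k_a)^{G}\simeq\bigoplus_{v} H^1\left(G_v, E(k_{a,w})\right)[\ell]$ from Brau's Remark~5.3 and showing $E(k_{a,w})[\ell]=0$ from $\ell\nmid\Tam\left(E/\QQ(\mu_\ell)\right)$ together with $\abs{\widetilde{E}^{\ns}(\kappa_w)}=\abs{\kappa_w}$.

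The one genuine problem is your supersingular step at $\ell$. You assert that supersingular reduction forces $\ell\notin\sP(E,\ell)$, hence $\ell\nmid a$. This is backwards: for $v\mid\ell$ one has $\kappa_v=\F_\ell$ and $a_v(E)=a_\ell(E)$, and supersingularity gives $a_\ell(E)\equiv 0\not\equiv 2\pmod\ell$ (since $\ell>2$), so $\ell$ \emph{always} belongs to $\sP(E,\ell)$ when $E$ is supersingular at $\ell$; it is rather the ordinary non-anomalous case at $\ell=3$ (where condition (3) forces $a_3\equiv 2\bmod 3$) that excludes $3$ from $\sP(E,3)$. Consequently $\ell\mid a$ is allowed in the supersingular case, and then the relevant row of Brau's table gives $\delta_\lambda=\ell-2\neq 0$ at the ramified prime above $\ell$, so the ``all $\delta_v=0$'' strategy cannot conclude there: as written, your argument is valid only when $E$ is ordinary at $\ell$ or when $\ell\nmid a$. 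You cannot repair this by citation, because the paper's own proof is silent on exactly this sub-case (it only invokes non-anomality, which addresses the ordinary row of the table); note also that the sub-case is not vacuous for the paper, since Section~\ref{section: Congruent Number Curves} applies the theorem to the curve $32a2$, which is supersingular at $3$ with $3\in\sP(E,3)$. Everything else in your outline (the invariants argument and the Kummer bootstrap from $\Sel_\ell$ to $\Sel_{\ell^\infty}$) is correct and consistent with the paper.
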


\begin{proof}
We write $k=\Q(\mu_\ell)$ and $k_a = k(\sqrt[\ell]{a})$.
Note that $k_a/k$ is a Galois extension of number fields of degree $\ell$.
Let $G$ denote its Galois group.

We make the following crucial observation:
Suppose $v\nmid \ell$ is a prime in $k$ that ramifies in $k_a$.
Then we know that $v$ divides $a$.
But $a$ is in the support of $\sP(E,\ell)$. This forces $v$ to lie above a prime $p$ for some $p\in \sP(E,\ell)$, justifying our choice of letter $v$. We thus have by assumption that 
\[
a_v(E)\not\equiv 2 \mod{\ell}.
\]
From the proof of Lemma~\ref{Hep over Gep lemma}, the primes of good reduction $v$ satisfying the above condition are those which satisfy
\[\widetilde{E}_v(\kappa_v)[\ell]=(0).\]
Further, Lemma~\ref{Hep over Gep lemma} itself tells us that $\sP(E,\ell)$ is a Chebotarev set with density $\left(\frac{\# H_{E,\ell}}{\# G_{E,\ell}}\right)$.

In view of the assumption that $\Sel_{\ell}\left(E/\Q(\mu_\ell)\right)=0$, we may apply Theorem~\ref{thm: brau}.
We shall consider two separate cases, namely when $E$ is semi-stable and when $E$ is not semi-stable.

Let us first consider the semi-stable case.
The only primes that can ramify are the primes dividing $\ell a$.
Since $a$ lies in the support of $\sP(E,\ell)$, it is clear that when $v\mid a$, we have $\delta_v =0$, where $\delta_v$ is given as in the statement of Theorem~\ref{thm: brau} (we are in the row of $v\nmid\ell, v$ good).
Once again from Theorem~\ref{thm: brau} we see that the hypotheses on the Tamagawa number and the fact that $\ell$ is non-anomalous imply that $\Sel_{\ell}\left(E/k_a\right)^G=0$ when $E/\Q$ has no prime of additive reduction.
By \cite[Proposition~1.6.12]{NSW08}, we deduce that $\Sel_{\ell}\left(E/k_a)\right)=0$.

Suppose now that $E$ is not semi-stable.
Again by \cite[Proposition~1.6.12]{NSW08}, to show that $\Sel_{\ell}\left(E/k_a\right)=0$, it suffices to show that $\Sel_{\ell}\left(E/k_a\right)^{G}=0$.

As discussed in \cite[Remark~5.3]{Bra14}, there is an isomorphism of $\F_\ell$-vector spaces
\[
\Sel_{\ell}\left(E/k_a\right)^{G} \simeq \bigoplus_{v\in S} H^1\left(G_v, E(k_{a,w}) \right)[\ell],
\]
where $w$ is a prime of $k_a$ lying above $v$.
Just as before, Theorem~\ref{thm: brau} tells us that when $v$ is a prime of good reduction or a prime of multiplicative reduction, the summand $H^1\left(G_v, E(k_{a,w}\right)[\ell]$ vanishes.
It remains to study the summands arising from the additive primes.
Let $v$ be a prime of additive reduction and write $r$ for the rational prime lying below $v$.
Recall that $E$ has good reduction at $\ell$.
It tells us that $\ell \neq r$, which implies the following isomorphism
\[
H^1\left(G_v, E(k_{a,w}) \right)[\ell] \simeq H^1\left(G_v, E(k_{a,w})[\ell] \right).
\]
Therefore, it is enough to show that
\begin{equation}
\label{triviality}
E(k_{a,w})[\ell]=0.
\end{equation}

We have assumed that $\ell\nmid \Tam(E/k)$; it follows $\ell\nmid \Tam(E/k_a)$.
Since $v$ is a prime of additive reduction reduction and $r\nmid \ell a$, the base change $E/k_a$ still has additive at $w$.
It follows from \cite[Theorem~2.30]{Was08} that
\[
\abs{\widetilde{E}^{\ns}(\kappa_w)} = \abs{\kappa_w} \not\equiv 0\pmod{\ell}.
\]
Recall that (see \cite[Chapter VII, Proposition~2.1]{silverman2009})
\[
\abs{ E(k_{a,w})[\ell]} = \abs{ E(k_{a,w})/\ell E(k_{a,w}) }= \abs{\widetilde{E}^{\ns}(\kappa_w)[\ell]} \times \abs{c_w^{(\ell)}(E/k_a)} =1,
\]
proving \eqref{triviality}.
Therefore, we conclude that
\[
\Sel_\ell\left(E/k_a\right)^G=\Sel_{\ell}\left(E/k_a\right) =0.
\]
\end{proof}

\begin{remark}
We compare our result with \cite[Theorem~4.1]{GFP}.
There are three main differences:
\begin{itemize}
\item we can define a larger set $\sP(E,\ell)$, i.e., without assuming that $p\equiv 1 \pmod{\ell}$.
\item we only need to assume that $\ell$ is a prime of good reduction (not good ordinary reduction).
\item we do not require that the mod $\ell$ representation is surjective.
\end{itemize}
The authors in \cite{GFP} only assume that $\rank E(k)=0$ and that $\Sha(E/k)[\ell]=0$.
However, we have the additional assumption that $E/k$ has no $\ell$-torsion.
The advantage is that we obtain that the $\ell$-Selmer group over $k_a$ is \emph{trivial}, not just finite.
But we point out that the auxiliary elliptic curve that is used in \cite{GFP} also satisfies this extra hypothesis that we impose.

The same Iwasawa theoretic argument used in \cite[Theorem~4.1]{GFP} can be repeated if $\ell$ is a prime of good ordinary reduction, the set $\sP(E,\ell)$ is as in our current theorem, and with no hypothesis on the mod $\ell$ representation being surjective.
\end{remark}

\subsection{Step 2: Rank jump in a quadratic extension}
We will rely heavily on the work of D.~Kriz and C.~Li \cite{krizli} to construct a set of primes $\sQ$ such that for all $q\in \sQ$, the Mordell--Weil rank of $E\left(\Q(\sqrt{-q})\right)$ \emph{or} $E\left(\Q(\sqrt{\abs{d_K}q})\right)$ is $1$.

\subsubsection{Recollections from \texorpdfstring{\cite{krizli}}{}}
We remind the reader of two results from \cite{krizli} which will allow us to obtain information on the rank jump upon performing a base-change.

Let $E/\Q$ be an elliptic curve of conductor $N$ and $K$ be a quadratic field.
Define the set of primes
\[
\sQ_{K}(E) = \left\{q\colon q\nmid 2N, \ q \text{ splits in }K, \ \text{ and } a_q(E)\equiv 1\pmod{2} \right\}.
\]

\begin{theorem}
\label{KL-result}
Let $E/\Q$ be an elliptic curve of conductor $N$ with trivial $2$-torsion.
Let $K$ be an imaginary quadratic field satisfying the Heegner hypothesis for $N$.
Let $P$ denote a Heegner point 
Suppose that
\begin{equation}
\label{star}\tag{$\star$}
2 \text{ splits in } K \text{ and } \ \frac{\abs{\widetilde{E}^{\ns}(\F_2)} \cdot \log_{\omega_E}(P)}{2} \not\equiv 0 \pmod{2}.
\end{equation}
Then for each square-free integer $d\equiv 1 \pmod{4}$ supported on $\sQ_K(E)$, the rank part of the Birch--Swinnerton-Dyer conjecture is true for $E^{(d)}/\Q$ and $E^{(d\cdot d_K)}/\Q$.
One of these two curves has (algebraic and analytic) rank zero and the other has (algebraic and analytic) rank one.

If further, the Tamagawa number at $2$ is odd,
\[
\rank E(\Q) = \rank E^{(d)}(\Q) \text{ if and only if } \Delta_E <0 \text{ or }, \Delta_E>0 \text{ and } d>0,
\]
where $\Delta_E$ is the minimal discriminant of $E$.
\end{theorem}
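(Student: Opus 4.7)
The plan is to adapt the 2-adic Heegner point machinery of Kriz--Li, whose input is the 2-adic formal logarithm of a Heegner point and whose output is precise control of $2$-Selmer groups in a quadratic twist family. The hypothesis that $2$ splits in $K$ places us in a setting where an anticyclotomic 2-adic Gross--Zagier-type formula applies, and the correction factor $\abs{\widetilde{E}^{\ns}(\F_2)}/2$ is the standard Tamagawa-type adjustment needed to pass between the formal logarithm of a Heegner point and an algebraic $2$-adic $L$-value modulo $2$.

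First, I would reinterpret the hypothesis $\frac{\abs{\widetilde{E}^{\ns}(\F_2)} \cdot \log_{\omega_E}(P)}{2} \not\equiv 0 \pmod 2$ as the non-vanishing modulo $2$ of an anticyclotomic $2$-adic $L$-value attached to $E/K$. Via a $2$-adic converse to Gross--Zagier--Kolyvagin, this forces $\corank_{\Z_2} \Sel_{2^\infty}(E/K) = 1$. Combined with Kolyvagin's Euler system argument applied to the (now necessarily non-torsion) Heegner point $P$, one concludes that $\rank E(K) = 1$ and $\Sha(E/K)[2^\infty]$ is finite, establishing BSD in rank one for $E/K$.

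Next, for each square-free $d \equiv 1 \pmod 4$ supported on $\sQ_K(E)$, I would compare the $2$-Selmer groups of $E^{(d)}/\Q$ and $E^{(d \cdot d_K)}/\Q$ with $\Sel_2(E/K)$. The defining conditions of $\sQ_K(E)$---namely that $q$ splits in $K$ and $a_q(E)$ is odd---are precisely the ones that render the local $2$-Selmer conditions at $q$ transparent to the twist, so that primes $q \mid d$ contribute nothing new to the $2$-Selmer rank, while the congruence $d \equiv 1 \pmod 4$ prevents local complications at $2$. Because $d$ and $d \cdot d_K$ differ by the fundamental discriminant of $K$, the global root numbers satisfy $w(E^{(d)}) \cdot w(E^{(d \cdot d_K)}) = -1$: exactly one is $+1$ and the other $-1$. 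Applying Kato's theorem on the rank-$0$ side and Gross--Zagier--Kolyvagin on the rank-$1$ side then gives BSD in these low ranks, with one twist of rank $0$ and the other of rank $1$.

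For the final statement, the sign of $\Delta_E$ controls the number of connected components of $E(\R)$, which governs the archimedean contribution to the ratio $w(E^{(d)})/w(E)$; the assumed odd Tamagawa number at $2$ removes the only other possible non-archimedean obstruction to a clean formula, giving the stated biconditional. The main obstacle I anticipate is the $2$-adic Gross--Zagier input: at $p=2$ the formal-group analysis underlying the anticyclotomic $p$-adic $L$-function is significantly more delicate than at odd primes, and pinning down the precise correction factor $\abs{\widetilde{E}^{\ns}(\F_2)}/2$---as opposed to a cruder version---requires careful local computation at $2$.
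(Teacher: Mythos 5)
First, note that this statement is not proved in the paper at all: it is a verbatim restatement of results of Kriz--Li, and the paper's ``proof'' is the citation to \cite[Theorem~4.3 and Corollary~5.11]{krizli}. So your sketch is really an attempt to re-derive Kriz--Li's theorem, and as such it has genuine gaps. The central engine you invoke --- reinterpreting \eqref{star} as non-vanishing mod $2$ of an anticyclotomic $2$-adic $L$-value and feeding it into a ``$2$-adic converse to Gross--Zagier--Kolyvagin'' --- is not available at $p=2$ and is not how the result is obtained. It is also unnecessary for the base curve: since $\log_{\omega_E}(P)\neq 0$, the Heegner point $P$ is already non-torsion, and Gross--Zagier plus Kolyvagin give rank one over $K$ directly. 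The real content of the theorem is the statement about the \emph{twists} $E^{(d)}$, and there your argument does not close. Comparing $2$-Selmer groups of $E^{(d)}$ with $\Sel_2(E/K)$ and invoking $w(E^{(d)})\cdot w(E^{(d\cdot d_K)})=-1$ can at best control Selmer parity; a root number $+1$ does not give non-vanishing of $L(E^{(d)},1)$, and an algebraic rank bound does not give the analytic statement, so ``Kato on the rank-$0$ side and Gross--Zagier--Kolyvagin on the rank-$1$ side'' cannot be applied: you never establish the required non-vanishing of the central $L$-value (resp.\ derivative) for the twisted curves. Moreover the claim that primes $q\mid d$ ``contribute nothing new to the $2$-Selmer rank'' is only automatic at odd $q$ with $a_q$ odd (where $H^1(\Q_q,E[2])=0$); the delicate local conditions at $2$, at $\infty$, and at the bad primes are exactly where the work lies, and $d\equiv 1\pmod 4$ alone does not trivialize the condition at $2$.

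The missing idea is Kriz--Li's congruence between Heegner points on $E$ and on its quadratic twists: for $d$ supported on $\sQ_K(E)$ (split primes with $a_q$ odd), the normalized formal logarithms of the Heegner points on $E$ and on $E^{(d)}$ (equivalently $E^{(d\cdot d_K)}$, which is isomorphic to $E^{(d)}$ over $K$) are congruent modulo $2$, the factor $\abs{\widetilde{E}^{\ns}(\F_2)}/2$ in \eqref{star} being exactly the local correction at the split prime $2$. Hypothesis \eqref{star} therefore propagates to the twists: the twisted Heegner point is non-torsion, Gross--Zagier gives $\ord_{s=1}L(E^{(d)}/K,s)=1$, the factorization $L(E^{(d)}/K,s)=L(E^{(d)}/\Q,s)\,L(E^{(d\cdot d_K)}/\Q,s)$ distributes analytic ranks $0$ and $1$ between the two twists, and Kolyvagin matches the algebraic ranks, giving the rank part of BSD for both. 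The final biconditional (Corollary~5.11 of \emph{op.~cit.}) is likewise a local computation deciding which of the two twists receives the Heegner point, using the oddness of $c_2(E)$ and the sign of $\Delta_E$; your appeal to the component group of $E(\R)$ gestures at the right archimedean input but is not by itself a proof. In short, without the Heegner-point congruence your approach cannot produce the analytic non-vanishing for the twists, which is the heart of the theorem.
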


\begin{proof}
See \cite[Theorem~4.3 and Corollary~5.11]{krizli}.
\end{proof}

The next two results are restatements of the above theorem, but we record them separately since they will be required for our calculations.
\begin{corollary}
\label{to be use for the real quadratic version}
Let $K$ be an imaginary quadratic field and let $E/\Q$ be an elliptic curve satisfying the following properties:
\begin{enumerate}[\textup{(}i\textup{)}]
\item $\rank E(\Q)=0$.
\item $E(\Q)[2]$ is trivial.
\item $(E,K)$ satisfies the Heegner Hypothesis.
\item Hypothesis~\eqref{star} holds.
\item $c_2(E)$ is odd.
\item $\Delta_E <0$.
\end{enumerate}
Then, $\rank E^{(d\cdot d_K)} =1$ for all $d\equiv 1\pmod{4}$ supported on $\sQ_{K}(E)$.
\end{corollary}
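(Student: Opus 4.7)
The plan is to deduce the corollary directly from Theorem~\ref{KL-result}. Hypotheses (ii)--(iv) exactly supply the assumptions needed to invoke the first part of that theorem: $E(\Q)[2]$ is trivial, $(E,K)$ satisfies the Heegner hypothesis for $N$, and condition~\eqref{star} holds. Thus, for every square-free integer $d \equiv 1 \pmod 4$ supported on $\sQ_K(E)$, exactly one of $E^{(d)}$ and $E^{(d\cdot d_K)}$ has (algebraic and analytic) rank zero, and the other has (algebraic and analytic) rank one.

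To decide which of the two twists carries the rank-one summand, I would invoke the refinement of Theorem~\ref{KL-result} that requires the Tamagawa number of $E$ at $2$ to be odd. Hypothesis (v) states precisely this, so the refinement is available. By hypothesis (vi), $\Delta_E<0$, so the condition ``$\Delta_E<0$, or $\Delta_E>0$ and $d>0$'' is satisfied for every square-free $d$ in our family, irrespective of the sign of $d$. Hence $\rank E^{(d)}(\Q)=\rank E(\Q)$ for all such $d$, and invoking (i) gives $\rank E^{(d)}(\Q)=0$. Combined with the rank dichotomy from the previous paragraph, this forces $\rank E^{(d\cdot d_K)}(\Q)=1$, as desired.

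There is essentially no obstacle here: the corollary is a bookkeeping consequence of Theorem~\ref{KL-result} once the hypotheses (i)--(vi) are matched up with those of Kriz--Li. The only subtle point worth flagging in the write-up is the reading of the final clause of Theorem~\ref{KL-result}: the sign of $d$ plays no role precisely because $\Delta_E<0$, which is why no case distinction on the sign of $d$ is needed to isolate the rank-one quadratic twist. Everything else (the exact description of $\sQ_K(E)$, the Heegner hypothesis, and the vanishing of $E(\Q)[2]$) is simply transferred verbatim from Theorem~\ref{KL-result}.
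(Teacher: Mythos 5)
Your proposal is correct and follows essentially the same route as the paper: apply the rank dichotomy from Theorem~\ref{KL-result}, then use the final clause (available since $c_2(E)$ is odd) together with $\Delta_E<0$ to conclude $\rank E^{(d)}(\Q)=\rank E(\Q)=0$, forcing $\rank E^{(d\cdot d_K)}(\Q)=1$. No gaps.
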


\begin{proof}
Since we are assuming that $\Delta_E<0$, this means that for all $d\equiv 1\mod{4}$,
\[
\rank E(\Q) = \rank E^{(d)}(\Q).
\]
The first assertion of Theorem~\ref{KL-result} implies that $E^{(d\cdot d_K)}/\Q$ has (analytic and algebraic) rank one.
\end{proof}

In exactly the same way, we can prove the following result.
\begin{corollary}
\label{basically what GFP used}
Let $K$ be an imaginary quadratic field and let $E/\Q$ be an elliptic curve satisfying the following properties:
\begin{enumerate}[\textup{(}i\textup{)}]
\item $\rank E(\Q)=0$.
\item $E(\Q)[2]$ is trivial.
\item $(E,K)$ satisfies the Heegner Hypothesis.
\item Hypothesis~\eqref{star} holds.
\item $c_2(E)$ is odd.
\item $\Delta_E >0$ and $d<0$.
\end{enumerate}
Then, $\rank E^{(d)} =1$ for all $d\equiv 1\pmod{4}$ supported on $\sQ_{K}(E)$.
\end{corollary}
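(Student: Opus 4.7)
The plan is to apply Theorem~\ref{KL-result} directly, exactly in the spirit of the proof of Corollary~\ref{to be use for the real quadratic version}, but to exploit the second (discriminant-dependent) conclusion rather than the first. Hypotheses (ii)--(v) of the corollary provide precisely the input needed to invoke both assertions of Theorem~\ref{KL-result}: conditions (ii) and (iii) set up the Heegner situation, (iv) is hypothesis~\eqref{star}, and (v) is the odd-Tamagawa-at-$2$ assumption required for the second half of the theorem.

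First, I would apply the first assertion of Theorem~\ref{KL-result} to conclude that for every square-free $d\equiv 1\pmod{4}$ supported on $\sQ_K(E)$, exactly one of $E^{(d)}/\Q$ and $E^{(d\cdot d_K)}/\Q$ has (algebraic and analytic) rank $0$ and the other has rank $1$. Next, I would use condition (vi) to decide which of the two twists carries the rank-one point. Since $c_2(E)$ is odd, the second assertion of Theorem~\ref{KL-result} applies and states that $\rank E(\Q)=\rank E^{(d)}(\Q)$ if and only if $\Delta_E<0$, or $\Delta_E>0$ and $d>0$. Under hypothesis (vi) we have $\Delta_E>0$ and $d<0$, so neither alternative holds; combined with $\rank E(\Q)=0$ from (i), this forces $\rank E^{(d)}(\Q)\neq 0$. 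The rank-zero/rank-one dichotomy from the first step then yields $\rank E^{(d)}(\Q)=1$, as desired.

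This argument is entirely symmetric to Corollary~\ref{to be use for the real quadratic version}: there, $\Delta_E<0$ pins down $E^{(d)}$ as the rank-zero twist, forcing $E^{(d\cdot d_K)}$ to be rank one; here, the reversed sign conditions swap which twist inherits the Heegner point. I do not anticipate any substantive obstacle, since all the analytic and $p$-adic input is encapsulated in Theorem~\ref{KL-result}; the corollary is simply bookkeeping of which twist is selected by the discriminant/sign criterion, and the only thing to check is that $(\Delta_E>0,\,d<0)$ genuinely falls outside the equality clause of that criterion, which it plainly does.
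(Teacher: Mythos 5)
Your proof is correct and follows essentially the same route as the paper, which deduces this corollary ``in exactly the same way'' as Corollary~\ref{to be use for the real quadratic version}: invoke the rank-zero/rank-one dichotomy from the first assertion of Theorem~\ref{KL-result}, then use the odd-$c_2$ sign criterion with $\Delta_E>0$, $d<0$ to rule out $\rank E^{(d)}(\Q)=\rank E(\Q)=0$, forcing $\rank E^{(d)}(\Q)=1$. No gaps.
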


\subsection{Step 3: Determining the size of these families of extensions}

\subsubsection{The density of $\sP$}\label{S:densityP}
We consider the special case where $\overline\rho_{E,\ell}$ is surjective.
We shall consider more general cases in subsequent sections.
We begin with the following elementary lemma:
\begin{lemma}
\label{lem:proportion}
Suppose that $\bar\rho_{E,\ell}$ is surjective.
When $\ell=3$, we have
\[
\frac{\# H_{E,\ell}}{\# G_{E,\ell}}=\frac{9}{16}.
\]
\end{lemma}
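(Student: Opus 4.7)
The plan is a direct enumeration inside $\GL_2(\mathbb{F}_3)$, taking advantage of the fact that the surjectivity hypothesis identifies $G_{E,3}$ with $\GL_2(\mathbb{F}_3)$, a group of order $48$. The set $H_{E,3}$ is defined by a condition on the eigenvalues of $\bar\rho(g)$, which depends only on the trace and determinant of $g$, so the count will be organized by conjugacy class.

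First I would translate the condition defining $H_{E,3}$ into a condition on $(\Tr g,\det g)$. Since $\det$ takes values in $(\ZZ/3)^*=\{\pm 1\}$, the exponent $f(g)$ of Definition~\ref{def:HEl} is $1$ when $\det g=1$ and $2$ when $\det g=-1$. Using $(\alpha\beta)^{f(g)}=1$ and the observation following \eqref{eqn: condn for HEl}, membership in $H_{E,3}$ is equivalent to $\alpha^{f(g)}+\beta^{f(g)}\not\equiv 2\pmod 3$. When $f(g)=1$ this reads $\Tr g\not\equiv 2$; when $f(g)=2$, I would use $\alpha^2+\beta^2=(\Tr g)^2-2\det g$ to rewrite it as $(\Tr g)^2-2\det g\not\equiv 2\pmod 3$, which simplifies (for $\det g=-1$) to $\Tr g\not\equiv 0\pmod 3$.

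Next I would enumerate the conjugacy classes of $\GL_2(\mathbb{F}_3)$ and record $(\Tr,\det)$ and class size for each. There are eight classes: the two central elements $\pm I$ of size $1$, two unipotent-type classes $\smat{a&1\\0&a}$ of size $8$ (for $a=1,2$), one split semisimple class $\smat{1&0\\0&2}$ of size $12$, and three non-split semisimple classes coming from the three irreducible monic polynomials $x^2+1$, $x^2+x+2$, $x^2-x+2$ over $\mathbb{F}_3$, each of size $6$. A quick check shows $2\cdot 1+2\cdot 8+12+3\cdot 6=48$. Reading off the $(\Tr,\det)$ pairs and summing the sizes of the classes satisfying the conditions from the previous paragraph gives $15$ elements with $\det=1$ (namely trace $0$ or $1$) and $12$ elements with $\det=-1$ (trace $1$ or $2$), for a total of $27$. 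Hence $\#H_{E,3}/\#G_{E,3}=27/48=9/16$.

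The computation is almost entirely bookkeeping; the only real subtlety is the rewriting of the eigenvalue condition in case $f(g)=2$ via the identity $\alpha^2+\beta^2=\Tr^2-2\det$, which is what allows the count to be reduced to a function of $(\Tr,\det)$ rather than requiring a case-by-case analysis in $\mathbb{F}_9$. I do not anticipate any genuine obstacle.
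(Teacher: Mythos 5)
Your proof is correct and follows essentially the same route as the paper: a direct conjugacy-class count in $\GL_2(\mathbb{F}_3)$ that identifies the same $27$ elements of $H_{E,3}$ out of $48$. The only cosmetic difference is that you first convert the eigenvalue condition into congruences on $(\Tr,\det)$ (namely $\Tr\neq 2$ when $\det=1$ and $\Tr\neq 0$ when $\det=-1$), whereas the paper checks the equivalent condition $\alpha^{f(g)}\neq 1$ class by class; the bookkeeping and the resulting ratio $27/48=9/16$ are identical.
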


\begin{proof}
The group $G_{E,\ell}\cong\GL_2(\Z/\ell\Z)$ has $\ell(\ell-1)^2(\ell+1)=48$ elements.
As in \cite[Appendix~A]{KLR22}, we may divide $G_{E,\ell}$ into the following conjugacy classes:
\begin{itemize}
 \item Let $C_{a,b}$ be the set of diagonalizable matrices with eigenvalues $a,b\in {\F_\ell}^\times$ with $a\ne b$.
 We have $(\ell-1)(\ell-2)/2$ choices of $C_{a,b}$ and for each choice, $\#C_{a,b}=\ell(\ell+1)$.
 \item Let $\cC_a$ be the set of non-diagonal matrices with one single eigenvalue $a\in {\F_\ell}^\times$.
 There are $(\ell-1)$ choices for $\cC_a$ and for each choice, $\#\cC_a=\ell^2-1$.
 \item Let $D_a=\left\{\begin{pmatrix}a&0\\0&a\end{pmatrix}\right\}$, $a\in \F_\ell^\times$.
 Then, there are $(\ell-1)$ choices for $a$ and for each choice $\#D_a=1$.
 \item Let $\cE_{\lambda}$ be the set of matrices whose eigenvalues are $\lambda$ and $\lambda'$, where $\lambda\in\mathbb{F}_{\ell^2}\setminus\F_\ell$ and $\lambda'$ is the conjugate of $\lambda$.
 There are $\ell(\ell-1)/2$ choices for $\lambda$ and for each choice of $\lambda$, $\#\cE_\lambda=\ell^2-\ell$.
\end{itemize}

All elements in $\cE_\lambda$ belong to $H_{E,\ell}$, giving $\ell^2(\ell-1)^2/2=18$ elements.
For contributions from $D_a$ and $\cC_a$, we seek $a\in \F_\ell^\times$ whose order is even.
The only choice is $a=2$.
This gives $(\ell^2-1)+1=9$ elements.
Finally, there is no element in $C_{a,b}$ belonging to $H_{E,\ell}$, since if $g\in C_{a,b}$, the eigenvalues of $g$ are $1$ and $2$, forcing $f(g)=2$ and $a^{f(g)}+b^{f(g)}=2$.
Summing up, the result follows.
\end{proof}

\begin{remark}
\cite[Proposition~4.6]{GFP} when applied with $\ell=3$, yields a proportion equal to $5/16$.
The proportion obtained in Lemma~\ref{lem:proportion} exceeds this by $1/4$.
\end{remark}

\begin{corollary}\label{cor:stable-rank-1-curve}
Suppose that $E/\QQ$ is an elliptic curve satisfying the hypotheses of Theorem~\ref{thm:growth-rank} with $\ell=3$.
Then, there exists a Chebotarev set of primes $\sP(E,3)$ with density $\frac{9}{16}$ such that for all $p\in \sP(E,3)$, the Mordell--Weil rank of $E\left(\QQ(\sqrt[3]{p})\right)=0$.
\end{corollary}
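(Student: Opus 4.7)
The proof should be a short assembly of Theorem~\ref{thm:growth-rank}, Lemma~\ref{Hep over Gep lemma}, and Lemma~\ref{lem:proportion}; no new ideas are needed. First, for each prime $p\in\sP(E,3)$, I would apply Theorem~\ref{thm:growth-rank} with $\ell=3$ and $a=p$ (which is trivially $3$-power-free and supported on $\sP(E,3)$) to conclude that
\[
\Sel_{3^\infty}\bigl(E/\QQ(\mu_3,\sqrt[3]{p})\bigr)=0.
\]
Now $[\QQ(\mu_3,\sqrt[3]{p}):\QQ(\sqrt[3]{p})]=2$, which is coprime to $3$, so the inflation--restriction sequence shows that
\[
\Sel_{3^\infty}\bigl(E/\QQ(\sqrt[3]{p})\bigr)\hookrightarrow \Sel_{3^\infty}\bigl(E/\QQ(\mu_3,\sqrt[3]{p})\bigr)
\]
is injective (the kernel is killed by $2$, while the target is $3$-primary). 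The vanishing of the larger Selmer group therefore forces $\Sel_{3^\infty}\bigl(E/\QQ(\sqrt[3]{p})\bigr)=0$, and in particular $\rank E(\QQ(\sqrt[3]{p}))=0$.

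For the density statement, I would identify $\sP(E,3)$ as a Chebotarev set via the Weil formula recalled in \S\ref{section: strategy}: the congruence $a_v(E)\not\equiv2\pmod 3$ translates (for $v\mid p$ a prime of good reduction in $\QQ(\mu_3)$) into $\widetilde{E}(\kappa_v)[3]=0$, which by Lemma~\ref{Hep over Gep lemma} is a Chebotarev condition of density $\#H_{E,3}/\#G_{E,3}$. Under the (implicit) surjectivity of $\bar\rho_{E,3}$ needed to invoke the numerical value $9/16$, Lemma~\ref{lem:proportion} evaluates this ratio to $\frac{9}{16}$, completing the proof.

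I do not anticipate any genuine obstacle: the heavy lifting was done in Theorem~\ref{thm:growth-rank} (where the additive, multiplicative, and good-reduction primes were all handled to give Selmer triviality over $\QQ(\mu_3,\sqrt[3]{p})$) and in the combinatorial conjugacy-class count of Lemma~\ref{lem:proportion}. The only mildly delicate point is the descent from $\QQ(\mu_3,\sqrt[3]{p})$ down to $\QQ(\sqrt[3]{p})$, but because the relevant degree is $2$ and the Selmer module is $3$-primary, this is immediate from standard group-cohomology vanishing.
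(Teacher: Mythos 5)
Your proposal is correct and follows essentially the same route as the paper, whose proof is simply the one-line combination of Theorem~\ref{thm:growth-rank} (with $a=p$), Lemma~\ref{lem:proportion}, and the Chebotarev density theorem. You merely make explicit two points the paper leaves implicit: the descent from $\Sel_{3^\infty}(E/\QQ(\mu_3,\sqrt[3]{p}))=0$ to rank $0$ over $\QQ(\sqrt[3]{p})$ (which also follows just from $E(\QQ(\sqrt[3]{p}))\subseteq E(\QQ(\mu_3,\sqrt[3]{p}))$), and the surjectivity of $\bar\rho_{E,3}$ assumed in \S\ref{S:densityP}, which is indeed what yields the value $\frac{9}{16}$.
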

\begin{proof}
It follows from Theorem~\ref{thm:growth-rank} (with $\ell=3$), Lemma~\ref{lem:proportion} and the Chebotarev density theorem.
\end{proof}

\begin{remark}
The main improvement in our result compared to \cite[Theorem~4.1]{GFP} is that we work with a larger set of primes $\sP(E,\ell)$.
In particular, we do not need to impose the hypothesis that the primes $p\in \sP(E,\ell)$ satisfy the additional hypothesis that $p \equiv 1 \pmod{\ell}$.
\end{remark}

\subsubsection{The density of $\sQ$}

To determine the density of the set $\sQ$ we use the following lemma.

\begin{lemma}
\label{GFP 5.1}
Let $E/\Q$ be an elliptic curve of conductor $N$ and let $K$ be a fixed quadratic field.
Define
\begin{align*}
\sQ(E,K) &= \left\{ q\colon q\nmid 2N, q \text{ splits in } K, \ a_q(E)\equiv 1\pmod{2}\right\}\\
\sQ^+(E,K) &= \left\{ q\colon q\in \sQ(E,K) \text{ and } q\equiv 1\pmod{4}\right\}\\
\sQ^-(E,K) &= \left\{ q\colon q\in \sQ(E,K) \text{ and } q\equiv -1\pmod{4}\right\}
\end{align*}
If the mod $2$ representation is surjective and $\Gal\left(\Q(E[2])\right)/\Q$ does not contain $K$ or $\Q(\sqrt{-1})$, then the density of the sets $\sQ(E,K)$, $\sQ^+(E,K)$, and $\sQ^-(E,K)$ are $\frac{1}{6}$, $\frac{1}{12}$, and $\frac{1}{12}$ respectively.
\end{lemma}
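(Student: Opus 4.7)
The plan is to reinterpret each defining condition on a prime $q$ as a Frobenius condition on a single number field, and then apply the Chebotarev density theorem to the compositum $M := \Q(E[2]) \cdot K \cdot \Q(\sqrt{-1})$.

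First I would unwind the Galois-theoretic content of each condition. Since the mod $2$ representation is surjective, $\Gal(\Q(E[2])/\Q) \cong \GL_2(\F_2) \cong S_3$. For a prime $q \nmid 2N$, the element $\bar\rho_{E,2}(\Frob_q)$ has determinant $q \equiv 1 \pmod 2$ and trace congruent to $a_q(E) \pmod 2$. Enumerating the six elements of $\GL_2(\F_2)$ shows that the trace-$1$ elements are precisely the two matrices of order $3$; equivalently, $a_q(E) \equiv 1 \pmod 2$ holds iff $\Frob_q$ lies in the conjugacy class of $3$-cycles $A_3 \setminus \{e\}$, a conjugacy-stable set of size $2$ out of $6$. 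Splitting of $q$ in $K$ is the condition $\Frob_q|_K = \mathrm{id}$, and the residue $q \pmod 4$ records whether $\Frob_q$ is trivial or not on $\Q(\sqrt{-1})$.

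Next I would show $\Gal(M/\Q) \cong S_3 \times \Z/2\Z \times \Z/2\Z$ under the stated hypotheses. Since $\Q(E[2])/\Q$ is Galois, any subfield contained in $\Q(E[2])$ corresponds to a subgroup of $S_3$; the hypothesis that $\Q(E[2])$ contains neither $K$ nor $\Q(\sqrt{-1})$ therefore forces $\Q(E[2]) \cap K = \Q(E[2]) \cap \Q(\sqrt{-1}) = \Q$. Coupled with $K \ne \Q(\sqrt{-1})$ (an implicit assumption, since otherwise $\sQ^-$ would be empty), linear disjointness of the three factors gives a Galois group of order $24$ that decomposes as a direct product, so the three conditions imposed by $\Frob_q$ become independent in $\Gal(M/\Q)$.

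Finally I would count elements and invoke the Chebotarev density theorem. For $\sQ(E,K)$ the relevant set of elements has size $2 \cdot 1 \cdot 2 = 4$ (trace $1$ in $S_3$, trivial in $\Gal(K/\Q)$, free in $\Gal(\Q(\sqrt{-1})/\Q)$), giving density $4/24 = 1/6$. For $\sQ^{\pm}(E,K)$ we further fix the third component (trivial for $+$, nontrivial for $-$), yielding $2$ elements and density $2/24 = 1/12$ in each case. The main subtle point is the direct-product decomposition in the previous step: the argument breaks down if $\Q(\sqrt{\Delta_E})$ (the unique quadratic subfield of $\Q(E[2])$) happens to coincide with the third quadratic subfield $\Q(\sqrt{-d})$ of $K \cdot \Q(\sqrt{-1})$ (where $K = \Q(\sqrt{d})$), in which case $[M:\Q]=12$ and the counts change; this degenerate situation must be excluded in practice, either by strengthening the hypothesis or by verifying it for the particular $E$ and $K$ to which the lemma is applied.
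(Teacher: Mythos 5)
Your proof is correct and is essentially the intended argument: the paper does not prove this lemma itself but simply cites \cite[Lemma~5.1]{GFP}, and the proof there is exactly this kind of Chebotarev count on the compositum $\Q(E[2])\cdot K\cdot\Q(\sqrt{-1})$, with the condition $a_q(E)\equiv 1\pmod 2$ translated into $\Frob_q$ lying in the conjugacy class of the two order-three elements of $\GL_2(\F_2)\cong S_3$, and the congruence $q\pmod 4$ read off on $\Q(\sqrt{-1})$. Your closing caveat is a genuine and worthwhile observation rather than a defect of your write-up: pairwise non-containment of $K$ and $\Q(\sqrt{-1})$ in $\Q(E[2])$ does not by itself yield a degree-$24$ compositum, since the unique quadratic subfield $\Q(\sqrt{\Delta_E})$ of $\Q(E[2])$ could coincide with the third quadratic subfield $\Q(\sqrt{-d})$ of $K\cdot\Q(\sqrt{-1})$ (writing $K=\Q(\sqrt{d})$), a possibility not excluded by the hypothesis as literally worded. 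In that degenerate case the fibre-product constraint forces every $q\in\sQ(E,K)$ to split in $\Q(\sqrt{-1})$, so $\sQ(E,K)$ still has density $\frac{1}{6}$ but $\sQ^{+}(E,K)$ has density $\frac{1}{6}$ and $\sQ^{-}(E,K)$ has density $0$; hence the conclusion for $\sQ^{\pm}$ does require this coincidence to be ruled out, exactly as you note. For the curves and fields actually used in this paper the coincidence never occurs (the quadratic subfield of $\Q(E[2])$ is $\Q(\sqrt{557})$ for $557b1$ and $\Q(\sqrt{-11})$ for $704g1$, neither of which is the relevant third quadratic subfield), so none of the downstream density claims is affected.
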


\begin{proof}
See \cite[Lemma~5.1]{GFP}.
\end{proof}

\section{\texorpdfstring{\ref{H10}}{} for rings of integers of \texorpdfstring{$\Q(\sqrt[3]{p},\sqrt{-q})$}{}}
\label{section: improving GFP with one curve and two fields}

The main goal of this section is to prove Theorem~\ref{thmA}, which provides an improvement of \cite[Theorem~1.2]{GFP}.
Our calculations in \S\ref{S:densityP} already allows us to obtain a set $\sP$ with larger density.
We shall work with two auxiliary imaginary quadratic fields,  allowing us to improve \cite[Lemma~6.4]{GFP} by enlarging the set $\sQ$.
In other words,  we are able to larger sets $\sP$ and $\sQ$ (compared with  Theorem~\ref{thm:GFP}) such that for all $p\in \sP$ and $q\in \sQ$, \ref{H10} has a negative solution for rings of integers of $\Q(\sqrt[3]{p},\sqrt{-q})$.

\subsection{Rank jump in multiple quadratic fields}
Fix an elliptic curve $E/\QQ$.
We work with more than one auxiliary imaginary quadratic fields and study rank jumps  of $E$ in any one of these fields.

\begin{lemma}
\label{proportion of set Q for two imaginary quadratic}
Let $K_{(1)}, \ldots, K_{(n)}$ be $n$ distinct imaginary quadratic fields.
Suppose that $E/\QQ$ is an elliptic curve such that its mod $2$ representation is surjective and that the Galois extension $\Q\left(E[2]\right)/\Q$ does not contain $\Q(\sqrt{-1})$ nor $K_{(i)}$ for $1\leq i\leq n$.
Then, 
\[
\sQ_{(n)}(E) = \left\{q\colon q\equiv -1\pmod{4}, \ q \text{ splits in any one of }K_{(1)} \text{ or }\ldots \text{ or } K_{(n)}, \ a_q(E) \equiv 1\pmod{2} \right\}
\]
is a Chebotarev set of primes with density $\frac{1}{6}\times\left( 1-\frac{1}{2^n}\right)$.
\end{lemma}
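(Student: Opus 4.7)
The plan is to realize $\sQ_{(n)}(E)$ (up to finitely many primes) as a Chebotarev set inside a single Galois extension, then read off the density from its conjugacy-stable structure.

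First I would form the composite
\[
L := \Q(E[2]) \cdot \Q(\mu_4) \cdot K_{(1)} \cdots K_{(n)}
\]
and verify the linear disjointness
\[
\Gal(L/\Q) \;\cong\; \GL_2(\F_2) \times \Gal(\Q(\mu_4)/\Q) \times \prod_{i=1}^n \Gal(K_{(i)}/\Q).
\]
Since $\Gal(\Q(E[2])/\Q) \cong S_3$ has a unique non-trivial abelian quotient (of order $2$), the intersection of $\Q(E[2])$ with any abelian extension of $\Q$ is contained in the quadratic subfield $\Q(\sqrt{\Delta_E})$. The hypothesis that $\Q(E[2])$ contains neither $\Q(\sqrt{-1})$ nor any $K_{(i)}$, together with the fact that distinct imaginary quadratic fields are multiplicatively independent in $\Q^{\times}/(\Q^{\times})^{2}$ (so $\Gal(K_{(1)} \cdots K_{(n)}/\Q) = (\Z/2\Z)^n$), gives the desired disjointness.

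Next I would translate the three defining conditions of $\sQ_{(n)}(E)$ into Frobenius conditions on the three independent factors of $\Gal(L/\Q)$: the condition $a_q(E) \equiv 1 \pmod 2$ says that $\Frob_q$ projects to one of the two $3$-cycles of $S_3$ (the order-$3$ elements have trace $1$ in $\F_2$, all others have trace $0$), contributing a factor $\tfrac{2}{6} = \tfrac{1}{3}$; the condition $q \equiv -1 \pmod 4$ says that $\Frob_q$ is non-trivial in $\Gal(\Q(\mu_4)/\Q)$, contributing $\tfrac{1}{2}$; and ``$q$ splits in at least one $K_{(i)}$'' says that the image of $\Frob_q$ in $(\Z/2\Z)^n$ is \emph{not} the all-ones vector, which happens for $2^n - 1$ of the $2^n$ elements, contributing $1 - 2^{-n}$. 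Each of these is conjugation-invariant, so by the Chebotarev density theorem
\[
\delta\bigl(\sQ_{(n)}(E)\bigr) \;=\; \tfrac{1}{3} \cdot \tfrac{1}{2} \cdot \bigl(1 - 2^{-n}\bigr) \;=\; \tfrac{1}{6}\left(1 - \tfrac{1}{2^n}\right),
\]
and the resulting set is Chebotarev in the sense of Definition~\ref{defi: Chebotarev set}.

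The main obstacle is the linear disjointness step. The given hypotheses directly rule out $\Q(\sqrt{\Delta_E})$ being $\Q(\sqrt{-1})$ or some $K_{(i)}$ individually, but in principle $\Q(\sqrt{\Delta_E})$ could still coincide with a product $\Q\bigl(\sqrt{(-1)^{a_0} \prod d_{i_j}^{a_j}}\bigr)$ sitting inside the multi-quadratic compositum. In the concrete applications of this lemma the auxiliary curve $E$ is fixed, so this can be verified case by case; for the abstract statement one should either strengthen the hypothesis accordingly or, as a cleaner alternative, apply inclusion-exclusion to reduce to Lemma~\ref{GFP 5.1} applied to each $K_{(i)}$ (and sum via the binomial identity $\sum_{k=1}^n (-1)^{k+1} \binom{n}{k} 2^{-k} = 1 - 2^{-n}$). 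Modulo this technicality, the remainder of the argument is the elementary counting above.
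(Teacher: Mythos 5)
Your argument is essentially the paper's: each of the three defining conditions is turned into a conjugacy-invariant Frobenius condition and the densities $\tfrac{1}{3}$, $\tfrac{1}{2}$ and $1-2^{-n}$ are multiplied via the Chebotarev density theorem. The difference is organizational: you work in the single compositum $\Q(E[2])\cdot\Q(\mu_4)\cdot K_{(1)}\cdots K_{(n)}$ and decompose its Galois group as a direct product, whereas the paper writes $\sQ_{(n)}(E)$ as the union of the single-field sets $\sQ_{K_{(i)}}(E)$ (which makes the Chebotarev property immediate), quotes the matrix count of \cite[Lemma~6.4]{GFP} for the factor $\left(1-2^{-n}\right)\cdot\tfrac{1}{3}$, and then invokes Lemma~\ref{GFP 5.1} for the remaining factor $\tfrac{1}{2}$ coming from $q\equiv-1\pmod{4}$. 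Your version is more self-contained, and you correctly identify that the correlation between $a_q(E)\equiv 1\pmod{2}$ and splitting in $\Q(\sqrt{\Delta_E})$ is exactly what the disjointness hypotheses are meant to control.

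Two caveats. First, your auxiliary claim that distinct imaginary quadratic fields are automatically multiplicatively independent in $\Q^{\times}/(\Q^{\times})^{2}$ is false once $n\geq 4$: for instance $(-2)(-3)(-5)(-30)=900$ is a square, so $\Q(\sqrt{-30})\subset \Q(\sqrt{-2},\sqrt{-3},\sqrt{-5})$, and then the proportion of primes inert in all of the $K_{(i)}$ is no longer $2^{-n}$. Any three distinct imaginary quadratic fields are independent (a product of an odd number of the $-d_i$ is negative, and $d_id_j$ a square forces $K_{(i)}=K_{(j)}$), so this does not affect the paper's application with $n=3$ in Lemma~\ref{one curve rank 1 in two extensions}; but for the abstract statement it is an unstated independence hypothesis, one the paper's own proof also uses silently when it asserts the $\tfrac{1}{2^n}$ proportion. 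Second, the worry you raise about $\Q(\sqrt{\Delta_E})$ coinciding with a composite quadratic subfield of $\Q(\sqrt{-1})\cdot K_{(1)}\cdots K_{(n)}$ is likewise shared by the paper's argument (and is harmless in the applications, where the quadratic subfield of $\Q(E[2])$ is identified explicitly); however, your proposed inclusion--exclusion fallback through Lemma~\ref{GFP 5.1} does not dispose of it: inclusion--exclusion requires the $k$-fold intersections to have density $\tfrac{1}{6}\cdot 2^{-k}$, which is again precisely the joint-independence statement, not something the single-field Lemma~\ref{GFP 5.1} provides.
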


\begin{proof}
We can rewrite
\begin{align*}
\sQ_{(n)}(E) = \sQ_{K_{(1)}}(E) \cup \sQ_{K_{(2)}}(E) \cup \ldots \cup \sQ_{K_{(n)}(E)}.
\end{align*}
Therefore, it is clear that $\sQ_{(n)}(E)$ is a Chebotarev set.
Using a matrix counting argument\footnote{Alternatively, one may use an inclusion-exclusion argument combined with the fact that the density of the set $\left\{q\colon q \text{ splits in }K_{(i)}, \ a_q(E) \equiv 1\pmod{2} \right\}$ is $\frac{1}{6}$ for each $i$.} as in \cite[Lemma~6.4]{GFP}, it is easy to see that the set
\[
\left\{q\colon q \text{ splits in any one of }K_{(1)} \text{ or }\ldots \text{ or } K_{(n)}, \ a_q(E) \equiv 1\pmod{2} \right\}
\]
has density $\left(1-\frac{1}{2^n}\right)\times \frac{1}{3}$.
Here, the first factor comes from the fact that we must avoid counting those primes $q$ which are inert in all of the imaginary quadratic fields $K_{(1)}, \ldots, K_{(n)}$.
By the Chebotarev density theorem, this happens for exactly $\frac{1}{2^n}$ proportion of the prime numbers.
Since we have assumed that $\Q(E[2])/\Q$ does not contain $\Q(\sqrt{-1})$ we can apply Lemma~\ref{GFP 5.1} to conclude that the desired density is
\[
\frac{1}{2}\times \left(1-\frac{1}{2^n}\right)\times \frac{1}{3}.
\]
\end{proof}

\begin{lemma}
\label{one curve rank 1 in two extensions}
Consider the elliptic curve $E=$\href{https://www.lmfdb.org/EllipticCurve/Q/557/b/1}{$557b1$}.
Then, there exists a Chebotarev set of primes $\sQ$ with density $\frac{7}{48}$ such that for all $q\in \sQ$,
\[
\rank E\left(\QQ(\sqrt{-q})\right)=1.
\]
\end{lemma}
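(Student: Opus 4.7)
The plan is to apply Lemma~\ref{proportion of set Q for two imaginary quadratic} with $n=3$, combined with Corollary~\ref{basically what GFP used} applied to three auxiliary imaginary quadratic fields; this will yield a Chebotarev set of density $\frac{1}{6}\bigl(1-\frac{1}{2^3}\bigr)=\frac{7}{48}$.

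\emph{Step 1 (Intrinsic hypotheses on $E$).} I would first record from LMFDB the following data for $E=557b1$: $\rank E(\QQ)=0$; $E(\QQ)[2]=0$; the minimal discriminant $\Delta_E$ is positive; the Tamagawa number $c_2(E)$ is odd; the mod-$2$ representation $\bar\rho_{E,2}$ is surjective, so $\Gal(\QQ(E[2])/\QQ)\cong S_3$; and the unique quadratic subfield of $\QQ(E[2])$, generated by $\sqrt{\Delta_E}$, is not $\QQ(\sqrt{-1})$. These discharge all conditions on $E$ required by Lemma~\ref{proportion of set Q for two imaginary quadratic} and conditions (i), (ii), (v), (vi) of Corollary~\ref{basically what GFP used}.

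\emph{Step 2 (Choosing $K_{(1)},K_{(2)},K_{(3)}$).} I would then search among imaginary quadratic fields of small absolute discriminant for three distinct fields $K_{(i)}$ such that: (a) the Heegner hypothesis holds for $(E,K_{(i)})$, which since $N=557$ is prime reduces to $557$ splitting in $K_{(i)}$; (b) hypothesis $(\star)$ of Theorem~\ref{KL-result} holds, i.e., $2$ splits in $K_{(i)}$ and the normalized formal-group $2$-adic logarithm of a Heegner point $P_{K_{(i)}}$ is odd; (c) $K_{(i)}\neq \QQ(\sqrt{\Delta_E})$, so that $K_{(i)}\not\subset \QQ(E[2])$. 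Conditions (a) and (c) are straightforward congruence and discriminant checks, and (b) is the one genuine computation.

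\emph{Step 3 (Conclusion).} For any prime $q$ in the set
\[
\sQ_{(3)}(E)=\bigl\{q:q\equiv -1\pmod 4,\ q\text{ splits in some }K_{(i)},\ a_q(E)\equiv 1\pmod 2\bigr\},
\]
we have $q\in \sQ^-(E,K_{(i)})$ for some $i$. Setting $d=-q$, the integer $d$ is square-free, $d\equiv 1\pmod 4$, $d<0$, and supported on $\sQ_{K_{(i)}}(E)$, so Corollary~\ref{basically what GFP used} applied to $(E,K_{(i)})$ gives $\rank E^{(-q)}(\QQ)=1$. Combined with $\rank E(\QQ)=0$, this yields
\[
\rank E\bigl(\QQ(\sqrt{-q})\bigr)=\rank E(\QQ)+\rank E^{(-q)}(\QQ)=1.
\]
Lemma~\ref{proportion of set Q for two imaginary quadratic} with $n=3$ then provides the density $7/48$, and $\sQ:=\sQ_{(3)}(E)$ is the desired set.

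The main obstacle is clearly (b) in Step 2: verifying $(\star)$ requires an explicit numerical calculation of a Heegner point on $E$ attached to each candidate $K_{(i)}$ and of its formal-group $2$-adic logarithm, carried out in SAGE or Magma. Candidate fields must be screened one at a time until three satisfying all three conditions (a), (b), (c) have been located; the existence of at least one such field is already established in \cite{GFP}, so the task is to extend their search to two further fields.
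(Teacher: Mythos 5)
Your proposal follows essentially the same route as the paper: the paper also verifies the intrinsic conditions on $E=557b1$ (rank $0$, trivial $2$-torsion, $\Delta_E>0$, odd $c_2$, surjective mod-$2$ image with unique quadratic subfield of discriminant $557$), takes the union $\sQ_{K_{(1)}}(E)\cup\sQ_{K_{(2)}}(E)\cup\sQ_{K_{(3)}}(E)$ with $n=3$ in Lemma~\ref{proportion of set Q for two imaginary quadratic}, and applies Corollary~\ref{basically what GFP used} to $d=-q$ exactly as you do. The only thing you leave open --- the explicit choice of the three auxiliary fields --- is settled in the paper by taking $K=\QQ(\sqrt{-7})$, $\QQ(\sqrt{-79})$, $\QQ(\sqrt{-127})$, with Hypothesis~\eqref{star} checked numerically (SAGE code in the appendix), which is precisely the computation you flag as the remaining task.
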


\begin{proof}
One can further verify using \cite{Sage} that
\begin{enumerate}[\textup{(}i\textup{)}]
\item $E$ has Mordell--Weil rank 0 over $\Q$ and trivial $2$-torsion.
\item The minimal discriminant of $E$ is positive.
\item The Tamagawa number at $2$ is odd.
\item $E$ has surjective mod $2$ representation.
\item The only degree to subfield of the Galois extension $\Q\left(E[2]\right)/\Q$ has discriminant $557$.
\end{enumerate}
One can check that the Heegner Hypothesis is satisfied for the pairs $(E,\Q(\sqrt{-7}))$, $(E,\Q(\sqrt{-79}))$ and $(E,\Q(\sqrt{-127}))$,
and that Hypothesis~\eqref{star} holds for each of these imaginary quadratic fields.

Choose $\sQ = \sQ_{\Q(\sqrt{-7})}(E) \cup \sQ_{\Q(\sqrt{-79})}(E) \cup \sQ_{\Q(\sqrt{-127})}(E)$.
We know from Lemma~\ref{proportion of set Q for two imaginary quadratic} that $\sQ$ is a Chebotarev set of density $\frac{7}{48}$.
Furthermore, Corollary~\ref{basically what GFP used} tells us that for integers $d=-q\equiv 1\pmod{4}$ that are supported on $\sQ_{\Q(\sqrt{-7})}(E)$ or $\sQ_{\Q(\sqrt{-79})}(E)$ or $\sQ_{\Q(\sqrt{-127})}(E)$, the Mordell--Weil rank of the twisted curve $E^{(d)}$ is $1$.
It follows that for each $q\in \sQ$,
\[
\rank E\left(\Q(\sqrt{-q})\right) = \rank E(\Q) + \rank E^{(-q)}(\Q) = 1.
\]
This completes the proof of the lemma.
\end{proof}

\subsection{Application to \texorpdfstring{\ref{H10}}{}.}
We now prove Theorem~\ref{thmA}.
Our result improves upon \cite[Theorem~1.2]{GFP} and provides a larger class of extensions where \ref{H10} is unsolvable.


\begin{theorem}
\label{thm: improve P and Q of GFP}
There are explicit Chebotarev sets of primes $\sP$ and $\sQ$ of density $\frac{9}{16}$ and $\frac{7}{48}$, respectively such that for all $p\in \sP$ and $q\in \sQ$, the analogue of \ref{H10} is unsolvable for the ring of integers of $L=\Q(\sqrt[3]{p},\sqrt{-q})$.
\end{theorem}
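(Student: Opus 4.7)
The plan is to combine the ingredients assembled in the previous sections: the rank-stabilization theorem (Corollary~\ref{cor:stable-rank-1-curve}) for the cubic field, the rank-jumping result (Lemma~\ref{one curve rank 1 in two extensions}) for the imaginary quadratic field, and the ``step-up'' Proposition~\ref{prop: GFP 3.3} together with the \ref{Thm:transitivity}. The whole argument is driven by a single auxiliary curve, namely $E=$\href{https://www.lmfdb.org/EllipticCurve/Q/557/b/1}{$557b1$}, which has already been shown to have the required rank-jumping behaviour over imaginary quadratic fields.

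The first step is to verify (by computer, via the code of Appendix) that $E=557b1$ satisfies the three hypotheses of Theorem~\ref{thm:growth-rank} at $\ell=3$: namely, good reduction at $3$, triviality of $\Sel_3(E/\Q(\mu_3))$, and the non-divisibility $3\nmid \Tam(E/\Q(\mu_3))\cdot\#\tilde E_3(\F_3)$. Granted these, Corollary~\ref{cor:stable-rank-1-curve} yields a Chebotarev set $\sP=\sP(E,3)$ of density $\tfrac{9}{16}$ (using the surjectivity of $\bar\rho_{E,3}$, also to be verified computationally for $E=557b1$, to invoke Lemma~\ref{lem:proportion}) such that $\rank E(\Q(\sqrt[3]{p}))=0$ for every $p\in\sP$. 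Taking $\sQ$ to be the set produced by Lemma~\ref{one curve rank 1 in two extensions} — the union of the three Chebotarev subsets $\sQ_{\Q(\sqrt{-7})}(E),\sQ_{\Q(\sqrt{-79})}(E),\sQ_{\Q(\sqrt{-127})}(E)$ — gives a set of density $\tfrac{7}{48}$ such that $\rank E(\Q(\sqrt{-q}))=1$ for all $q\in\sQ$.

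With $\sP$ and $\sQ$ in hand, fix $p\in\sP$, $q\in\sQ$, and set $F=\Q(\sqrt[3]{p})$, $K=\Q(\sqrt{-q})$, $L=F\cdot K$. By construction, the hypotheses of Proposition~\ref{prop: GFP 3.3} are satisfied for the pair $(E,F,K)$, so $\cO_F$ is Diophantine in $\cO_L$, i.e.\ $L/F$ is integrally Diophantine. Now $F=\Q(\sqrt[3]{p})$ has exactly one complex place, so by case (c) of the list in the introduction (Shlapentokh--Videla), $F/\Q$ is integrally Diophantine as well. The \ref{Thm:transitivity} then yields that $L/\Q$ is integrally Diophantine, i.e.\ $\Z$ is Diophantine in $\cO_L$.

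To conclude, we invoke the standard reduction outlined after Proposition~\ref{prop: GFP 3.3}: any decision procedure for the solubility of polynomial equations over $\cO_L$ would, when fed the defining polynomial of $\Z$ inside $\cO_L$, solve \ref{H10} over $\Z$, contradicting the \ref{Thm:Matiyasevich}. Hence the analogue of \ref{H10} for $\cO_L$ is unsolvable. The only real work is the computational verification of the hypotheses for the single curve $557b1$ (good reduction, surjectivity of $\bar\rho_{E,3}$, vanishing of $\Sel_3(E/\Q(\mu_3))$, Tamagawa and torsion coprimality, and the Heegner/\eqref{star} conditions for the three imaginary quadratic fields); the rest of the argument is a clean assembly of the machinery from Sections~\ref{section: prelim}--\ref{section: strategy} with the improved densities computed in \S\ref{S:densityP} and Lemma~\ref{proportion of set Q for two imaginary quadratic}. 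I expect the main obstacle to be precisely this: securing the triviality of $\Sel_3(E/\Q(\mu_3))$ and of $E(\Q(\mu_3))[3]$ for $E=557b1$, which requires the explicit Sage computations referenced in the appendices.
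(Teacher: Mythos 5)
Your proposal is correct and follows essentially the same route as the paper: the same auxiliary curve $557b1$, the sets $\sP$ and $\sQ$ from Corollary~\ref{cor:stable-rank-1-curve} and Lemma~\ref{one curve rank 1 in two extensions}, then Proposition~\ref{prop: GFP 3.3}, the fact that $\Q(\sqrt[3]{p})/\Q$ is integrally Diophantine (one complex place), and the \ref{Thm:transitivity}. The only cosmetic difference is that the paper cites \cite[Lemma~6.2]{GFP} for most of the hypotheses of Theorem~\ref{thm:growth-rank} on $557b1$ (supplemented by a Sage check that $E(\Q(\mu_3))[3]=0$), where you propose direct computational verification.
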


\begin{proof}
We will work with the rank $0$ auxiliary elliptic curve $E=\href{https://www.lmfdb.org/EllipticCurve/Q/557/b/1}{557b1}$.
This elliptic curve satisfies the hypotheses of Theorem~\ref{thm:growth-rank}, see \cite[Lemma~6.2]{GFP}\footnote{
One extra condition we need to check using \cite{Sage} is that $E(\Q(\mu_3))[3]$ is trivial.}.
We set  $\sP$ and $\sQ$ to be the sets of primes given in Corollary~\ref{cor:stable-rank-1-curve} and Lemma~\ref{one curve rank 1 in two extensions} respectively.
For each $p\in \sP$ and $q\in \sQ$, the rank of the elliptic curve remains of Mordell--Weil rank $0$ over the cubic extension $\Q(\sqrt[3]{p})$ and has rank $1$ over the imaginary quadratic extension $\Q(\sqrt{-q})$.
Proposition~\ref{prop: GFP 3.3} asserts that $\Q(\sqrt[3]{p}, \sqrt{-q})/\Q(\sqrt[3]{p})$ is integrally Diophantine.
But, it is well-known that $\Q(\sqrt[3]{p})/\Q$ is integrally Diophantine since $\Q(\sqrt[3]{p})$ has exactly one complex place.
The result follows from the \ref{Thm:transitivity}.
\end{proof}

\section{\texorpdfstring{\ref{H10}}{} for rings of integers of \texorpdfstring{$\Q(\sqrt[3]{p},\sqrt{Dq})$}{}}
\label{set Q one curves two K negative disc}

In this section, we work with a fixed elliptic curve of negative minimal discriminant and an imaginary quadratic field of discriminant $-D$.
We will show that there exist Chebotarev sets $\sP$ and $\sQ_D$ of positive density such that for all $p\in \sP$, and $q\in \sQ_D$, \ref{H10} has a negative solution for two families of rings of integers of $\Q(\sqrt[3]{p}, \sqrt{Dq})$.
Unlike in the previous section, the extensions we obtain contain a real quadratic subfield, rather than an imaginary one.
While these extensions are still deduced from results of Kriz--Li \cite{krizli} by verifying Hypothesis~\eqref{star} for an auxiliary elliptic curve and imaginary quadratic fields, we obtain real quadratic fields where we achieve rank jumps since the minimal discriminant of our chosen elliptic curve is negative (see Lemma~\ref{set Q for two imaginary quadratic one EC negative disc} below for details).

\begin{lemma}
\label{SAGE lemma 704g1}
Consider the elliptic curve $E=$\href{https://www.lmfdb.org/EllipticCurve/Q/704/d/1}{$704g1$}.
\begin{enumerate}[\textup{(}1\textup{)}]
\item The hypotheses of Theorem~\ref{thm:growth-rank} are satisfied.
\item Hypothesis~\eqref{star} holds for $K=\Q(\sqrt{-D})$
where $D$ is in the set\footnote{We only checked through the values of $D$ with $D<700$.}
\[
\mathfrak{D}=\{7, 39, 95, 127, 167, 255, 263, 271, 303, 359, 391, 447, 479, 527, 535, 615, 623, 655, 679, 695\}.
\]
\end{enumerate}
\end{lemma}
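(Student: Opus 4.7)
The lemma is a finite computational verification. The plan is to check each hypothesis of Theorem~\ref{thm:growth-rank} for part~(1) and Hypothesis~\eqref{star} for part~(2), using standard routines in SAGE and Magma. For part~(1), take $\ell=3$. Since the conductor of $E$ equals $N=2^6\cdot 11$, which is coprime to $3$, good reduction at $3$ is immediate. What remains is to verify \emph{(i)} the vanishing of $\Sel_3(E/\QQ(\mu_3))$, and \emph{(ii)} that $3\nmid\Tam\bigl(E/\QQ(\mu_3)\bigr)\cdot\#\tilde E(\F_3)$. For \emph{(i)}, a $3$-descent over $\QQ(\mu_3)=\QQ(\sqrt{-3})$ suffices; since $E/\QQ$ has analytic rank $0$, one expects the descent to succeed readily. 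For \emph{(ii)}, one counts points on the reduction of $E$ modulo $3$, and computes the local Tamagawa numbers at the unique primes of $\QQ(\mu_3)$ lying above $2$ and above $11$ (both of which are inert in $\QQ(\sqrt{-3})$).

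For part~(2), fix $D\in\fD$ and set $K=\Qd{D}$. Observe first that every element of $\fD$ satisfies $D\equiv 7\pmod 8$, which gives $-D\equiv 1\pmod 8$ and hence $2$ splits in $K$; this is the first half of \eqref{star}. To make sense of the second half one must also verify the Heegner hypothesis for $(E,K)$, which reduces to showing that $11$ splits in $K$, i.e., $\left(\tfrac{-D}{11}\right)=1$ for each of the twenty values of $D$---a routine congruence check. The parity condition in \eqref{star} then requires constructing a Heegner point $P\in E(K)$, computing $\log_{\omega_E}(P)$ to sufficient $2$-adic precision at a prime of $K$ above $2$, and verifying that $|\tilde E^{\ns}(\F_2)|\cdot\log_{\omega_E}(P)/2$ is a $2$-adic unit.

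The main obstacle is the last step of part~(2): producing the Heegner points and evaluating their $2$-adic formal logarithms with enough precision across all twenty values of $D$. These computations grow heavier as $|d_K|$ increases, and $2$-adic precision must be propagated carefully throughout. Since $\fD$ is finite and explicitly given, however, the entire verification reduces to running a finite sequence of standard algorithms, for which we refer to the SAGE scripts in the appendix.
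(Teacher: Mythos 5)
Your proposal takes essentially the same route as the paper: the lemma is proved by a finite computational verification deferred to the SAGE code in the appendices (with $D=7$ also covered by Table~2 of Kriz--Li), and your checklist --- good reduction at $3$, triviality of $\Sel_3\bigl(E/\Q(\mu_3)\bigr)$, the Tamagawa/non-anomalous condition, the splitting of $2$ and $11$ in $K$ (your observation that every $D\in\fD$ is $\equiv 7\pmod 8$ handles the prime $2$ uniformly), and the $2$-adic valuation of the Heegner-point logarithm --- is exactly what the paper's code verifies. The only minor difference is internal to part (1): the paper certifies $\Sel_3\bigl(E/\Q(\sqrt{-3})\bigr)=0$ indirectly, via the rank of the twist $E^{(-3)}$, the triviality of its analytic Sha (invoking Qiu's theorem) and the absence of $3$-torsion over $\Q(\sqrt{-3})$, rather than by the direct $3$-descent over $\Q(\mu_3)$ you suggest, but both are legitimate ways to run the same finite check.
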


\begin{proof}\leavevmode
\begin{enumerate}[\textup{(}1\textup{)}]
\item The hypotheses can be verified using \cite{Sage} (see \S~\ref{S:704GFP}).
\item For $\Q(\sqrt{-7})$ this is recorded in \cite[Table~2]{krizli}.
For the other values of $D$, the conditions can be verified using \cite{Sage}; and the code is provided in \S~\ref{S:star}.
\end{enumerate}
\end{proof}

\begin{lemma}
\label{set Q for two imaginary quadratic one EC negative disc}
Consider the elliptic curve $E=$\href{https://www.lmfdb.org/EllipticCurve/Q/704/d/1}{$704g1$}.
Define the set 
\begin{align*}
\sQ_{D}(E) &= \left\{q\colon q\equiv -1\pmod{4}, \ q \text{ splits in } \Q(\sqrt{-D}), \ a_q(E) \equiv 1\pmod{2} \right\},
\end{align*}
such that Hypothesis~\eqref{star} holds for $\Q(\sqrt{-D})$.
This is a Chebotarev set of primes of density $\frac{1}{12}$.
Moreover, for all $q\in \sQ_{D}$,
\begin{align*}
\rank E\left(\Q(\sqrt{Dq})\right) = 1.
\end{align*}
\end{lemma}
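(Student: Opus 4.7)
The plan is to prove the two assertions in turn, in each case reducing to results already established earlier in the paper.

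For the density statement, observe that $\sQ_D(E)$ coincides with the set $\sQ^-(E, K)$ of Lemma~\ref{GFP 5.1} applied with $K = \Q(\sqrt{-D})$. The hypotheses of that lemma reduce to verifying that the mod~$2$ representation of $E = 704g1$ is surjective and that the $S_3$-extension $\Q(E[2])/\Q$ contains neither $\Q(\sqrt{-1})$ nor $\Q(\sqrt{-D})$ for any $D \in \mathfrak{D}$. A SAGE computation with the $2$-division polynomial of $E$ verifies the surjectivity and identifies the unique quadratic subfield of $\Q(E[2])$ as $\Q(\sqrt{\Delta_E})$; one then confirms that the corresponding discriminant class matches none of the prohibited ones. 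Lemma~\ref{GFP 5.1} then yields the density $\frac{1}{12}$.

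For the rank jump, I would invoke Corollary~\ref{to be use for the real quadratic version} with $K = \Q(\sqrt{-D})$. Every $D \in \mathfrak{D}$ satisfies $D \equiv 3 \pmod 4$, so the field discriminant equals $d_K = -D$. Given $q \in \sQ_D(E)$, set $d = -q$: the condition $q \equiv -1 \pmod 4$ forces $d \equiv 1 \pmod 4$, and the inclusion $\sQ_D(E) \subseteq \sQ_{\Q(\sqrt{-D})}(E)$ is immediate from the definitions. The remaining six hypotheses of the corollary, namely $\rank E(\Q) = 0$, triviality of $E(\Q)[2]$, oddness of $c_2(E)$, negativity of $\Delta_E$, the Heegner Hypothesis for $(E, K)$, and Hypothesis~\eqref{star}, are all intrinsic to $E = 704g1$ and are supplied by Lemma~\ref{SAGE lemma 704g1} (with part~(2) providing Hypothesis~\eqref{star} for each $D \in \mathfrak{D}$). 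The corollary then delivers $\rank E^{(d \cdot d_K)}(\Q) = \rank E^{(qD)}(\Q) = 1$.

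Finally, since $q$ splits in $\Q(\sqrt{-D})$ we have $\gcd(D, q) = 1$, so $Dq$ is a positive squarefree non-square; in particular $\Q(\sqrt{Dq})$ is a real quadratic field. The standard decomposition into quadratic twists then yields
\[
\rank E(\Q(\sqrt{Dq})) = \rank E(\Q) + \rank E^{(Dq)}(\Q) = 0 + 1 = 1,
\]
as desired. The only genuinely case-by-case step in the argument is the finite bookkeeping check that $\Q(\sqrt{-D}) \not\subset \Q(E[2])$ for each $D \in \mathfrak{D}$; this is a quick SAGE verification once the quadratic subfield of $\Q(E[2])$ is identified, and it is not a serious obstacle.
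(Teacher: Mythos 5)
Your proof is correct and follows essentially the same route as the paper: the density comes from Lemma~\ref{GFP 5.1} (the set is $\sQ^-(E,K)$, after checking mod-$2$ surjectivity and that the quadratic subfield of $\Q(E[2])$ avoids $\Q(\sqrt{-1})$ and $\Q(\sqrt{-D})$), and the rank jump comes from Corollary~\ref{to be use for the real quadratic version} applied with $d=-q$, $d_K=-D$, giving $\rank E^{(Dq)}(\Q)=1$ and hence $\rank E(\Q(\sqrt{Dq}))=1$. The extra details you supply (that every $D\in\fD$ is $\equiv 3\pmod 4$ so $d_K=-D$, and the twist decomposition at the end) are implicit in the paper's argument, so there is no substantive difference.
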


\begin{proof}
The proof is adapted from \cite[Lemma~6.4]{GFP}.
The key difference in the proof is that the elliptic curve \href{https://www.lmfdb.org/EllipticCurve/Q/704/d/1}{$704g1$} has negative minimal discriminant.
Hence, we must apply Corollary~\ref{to be use for the real quadratic version} (instead of Corollary~\ref{basically what GFP used}).
The hypotheses of Corollary~\ref{to be use for the real quadratic version} can be verified directly (for example from LMFDB).
The said corollary asserts that for $d\equiv 1\pmod{4}$ supported on $\sQ_{D}(E)$, we have
\[
\rank E^{(Dd)} = 1.
\]
But note that $d$ can take either positive or negative values.
Therefore, choosing $d$ to be the negative of a prime number in $\sQ_{D}(E)$;
i.e., $d\equiv -q\equiv 1\pmod{4}$, we have that
\[
\rank E^{(Dq)} = 1.
\]
In other words,
\[
\rank E\left(\Q(\sqrt{Dq})\right) = 1.
\]

Since $E=$\href{https://www.lmfdb.org/EllipticCurve/Q/704/d/1}{$704g1$} has surjective mod $2$ representation and $\Q(\sqrt{-11})$ is the unique imaginary quadratic subfield in the Galois extension $\Q(E[2]/\Q)$, the density result follows from Lemma~\ref{GFP 5.1}.
\end{proof}
We can now prove Theorem~\ref{thmB}.

\begin{theorem}
\label{thm: real quadratic one EC two K}
For all $D\in\fD$, there are explicit Chebotarev sets of primes $\sP$ (which is independent of $D$) and $\sQ_D$ of density $\frac{9}{16}$ and $\frac{1}{12}$ respectively such that for all $p\in \sP$ and $q\in \sQ_D$, the analogue of \ref{H10} is unsolvable for the ring of integers of $L=\Q(\sqrt[3]{p},\sqrt{Dq})$.
\end{theorem}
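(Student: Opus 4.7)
The plan is to follow the blueprint of Theorem~\ref{thm: improve P and Q of GFP} verbatim, but with the auxiliary elliptic curve \href{https://www.lmfdb.org/EllipticCurve/Q/557/b/1}{$557b1$} replaced by $E=$\href{https://www.lmfdb.org/EllipticCurve/Q/704/d/1}{$704g1$}. The key reason to switch curves is that $E$ has \emph{negative} minimal discriminant, so Corollary~\ref{to be use for the real quadratic version} forces the rank-one twist to arise from a real quadratic field of the form $\Q(\sqrt{Dq})$ rather than an imaginary one, which is precisely the shape of the extension we need to adjoin to $\Q(\sqrt[3]{p})$ in order to hit a previously unstudied family of fields.

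\smallskip

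\emph{Step 1 (Construct $\sP$).} I first verify the hypotheses of Theorem~\ref{thm:growth-rank} for $(E,\ell=3)$, namely good reduction at $3$, vanishing of $\Sel_3(E/\Q(\mu_3))$, coprimality of $3$ with $\Tam(E/\Q(\mu_3))\cdot\#\tilde{E}_3(\F_3)$, and (for the density count) surjectivity of $\overline{\rho}_{E,3}$. All of these are recorded in Lemma~\ref{SAGE lemma 704g1}(1). Corollary~\ref{cor:stable-rank-1-curve} then supplies a Chebotarev set $\sP=\sP(E,3)$ of density $\tfrac{9}{16}$ such that $\rank E(\Q(\sqrt[3]{p}))=0$ for every $p\in\sP$. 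This set depends only on $E$ and $\ell=3$, hence is independent of $D$.

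\smallskip

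\emph{Step 2 (Construct $\sQ_D$).} For each $D\in\fD$, Lemma~\ref{SAGE lemma 704g1}(2) verifies Hypothesis~\eqref{star} of Theorem~\ref{KL-result} for the imaginary quadratic field $\Q(\sqrt{-D})$. Invoking Lemma~\ref{set Q for two imaginary quadratic one EC negative disc} produces a Chebotarev set $\sQ_D$ of density $\tfrac{1}{12}$ with the property that $\rank E(\Q(\sqrt{Dq}))=1$ for every $q\in\sQ_D$.

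\smallskip

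\emph{Step 3 (Assemble via transitivity).} Fix $p\in\sP$ and $q\in\sQ_D$, and set $F=\Q(\sqrt[3]p)$, $K=\Q(\sqrt{Dq})$, so that $L=F\cdot K$. By Steps 1 and 2 we have $\rank E(F)=0$ and $\rank E(K)=1$, so Proposition~\ref{prop: GFP 3.3} gives that $L/F$ is integrally Diophantine. Since $F$ is a non-Galois cubic with exactly one complex place, case (c) of the Introduction (via \cite{Phe88, Shlapen, videla16decimo}) shows $F/\Q$ is integrally Diophantine. The \ref{Thm:transitivity} then yields that $L/\Q$ is integrally Diophantine, i.e., $\Z$ is Diophantine in $\cO_L$. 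The standard reduction recalled in Section~\ref{section: strategy} turns this into unsolvability of the analogue of \ref{H10} for $\cO_L$, via \ref{Thm:Matiyasevich}.

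\smallskip

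No conceptual obstacle remains beyond the computer-algebra verifications already packaged in Lemma~\ref{SAGE lemma 704g1}; these amount to checking a $3$-Selmer triviality, a Tamagawa/residue condition at $3$, mod-$3$ surjectivity, and Hypothesis~\eqref{star} for each $D\in\fD$ (the last being the most delicate, but reducible to a finite computation of the formal logarithm at a Heegner point modulo $2$). The only additional point worth emphasising is that the fields $L$ so produced have exactly two complex places, are not Galois over $\Q$, and fall outside every case in the Introduction's list, so that the theorem is not vacuous; this disjointness is noted immediately after the statement of Theorem~\ref{thmB}.
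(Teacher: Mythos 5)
Your proposal is correct and takes essentially the same route as the paper, whose proof of Theorem~\ref{thm: real quadratic one EC two K} simply repeats the argument of Theorem~\ref{thm: improve P and Q of GFP} with the auxiliary curve $704g1$ and the fields $\Q(\sqrt{-D})$, relying on Lemma~\ref{SAGE lemma 704g1}, Corollary~\ref{cor:stable-rank-1-curve}, and Lemma~\ref{set Q for two imaginary quadratic one EC negative disc} exactly as you do. The only difference is presentational: you spell out the assembly via Proposition~\ref{prop: GFP 3.3}, the \ref{Thm:transitivity}, and \ref{Thm:Matiyasevich}, which the paper leaves implicit by reference to the earlier proof.
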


\begin{proof}
This result can be proven in the same way as Theorem~\ref{thm: improve P and Q of GFP}.
The only difference is that we use $E=$\href{https://www.lmfdb.org/EllipticCurve/Q/704/d/1}{$704g1$} and the imaginary quadratic field $\Q(\sqrt{-D})$.
\end{proof}

\section{Studying \texorpdfstring{\ref{H10}}{} Problem via a pair of elliptic curves}
\label{section: pair of ec}

The goal of this section is to study a slightly different version of Theorem~\ref{thm: real quadratic one EC two K} by working with two auxiliary elliptic curves.
This allows us to obtain a larger set of $\sP$, at the expense of a smaller set of $\fD$ with a lower density for $\sQ_D$.

\subsection{Rank stabilization over cubic extensions for a pair of elliptic curves}
\label{set P two curves}

The goal of this section is to study a version of Corollary~\ref{cor:stable-rank-1-curve} for a pair of two elliptic curves (see Corollary~\ref{cor:stable-rank-2-curves} below).
We first introduce the following definition.

\begin{definition}
Let $E_1/\QQ$ and $E_2/\QQ$ be two elliptic curves, and $\ell$ be a prime number.
\begin{itemize}
\item Define the sets 
\begin{align*}
G_{E_1,E_2,\ell}&=\left\{(A,B)\in\GL_2(\ZZ/\ell\ZZ)\times\GL_2(\ZZ/\ell\ZZ) :\det(A)=\det(B) \right\}\\
H_{E_1,E_2,\ell}&=\left\{(A,B)\in G_{E_1,E_2,\ell}:A\in H_{E_1,\ell}\text{ or }B\in H_{E_1,\ell}\right\}.
\end{align*}
\item Elliptic curves $E_1$ and $E_2$ are said to be \textbf{maximally disjoint} at $\ell$ if the image of the product of their mod $\ell$ representations
\[
\jr:G_\QQ\rightarrow \GL(E_1[\ell])\times\GL(E_2[\ell])=\GL_2(\ZZ/\ell\ZZ)\times\GL_2(\ZZ/\ell\ZZ).
\]
is given by $G_{E_1,E_2,\ell}$.
\end{itemize}
\end{definition}
In particular, the definition of \emph{maximally disjoint} implies that both $\bar\rho_{E_1,\ell}$ and $\bar\rho_{E_2,\ell}$ are surjective.
\begin{proposition}
\label{prop:joint3}
Suppose that $E_1/\QQ$ and $E_2/\QQ$ are two elliptic curves that are maximally disjoint at $\ell=3$.
Then,
\[
\frac{\#H_{E_1,E_2,\ell}}{\#G_{E_1,E_2,\ell}}=\frac{103}{128}.
\]
\end{proposition}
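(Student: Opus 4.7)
The plan is to perform an explicit count on the finite group $G_{E_1,E_2,3}$ by refining the conjugacy-class analysis already used to prove Lemma~\ref{lem:proportion}. First I would determine $\#G_{E_1,E_2,3}$: since $\det\colon\GL_2(\ZZ/3\ZZ)\to(\ZZ/3\ZZ)^\times$ is surjective with each fiber of size $|\SL_2(\ZZ/3\ZZ)|=24$, summing over the two possible determinant values gives $\#G_{E_1,E_2,3}=2\cdot 24^2=1152$.

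Next I would count $\#H_{E_1,E_2,3}$ by inclusion--exclusion on the disjunction ``$A\in H_{E_1,3}$ or $B\in H_{E_2,3}$''. For each fixed $A\in H_{E_1,3}$ there are exactly $24$ matrices $B$ with $\det B=\det A$, and similarly with the roles reversed; since $\#H_{E_i,3}=27$ by Lemma~\ref{lem:proportion}, the two ``single-condition'' terms each contribute $27\cdot 24=648$. The remaining task is to compute the intersection $\#\{(A,B):A\in H_{E_1,3},\ B\in H_{E_2,3},\ \det A=\det B\}$, which equals $N_1^2+N_{-1}^2$ where $N_d:=\#\{g\in H_{E,3}:\det g=d\}$.

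The main technical step, and what I expect to be the only mildly delicate point, is the split of the $27$ elements of $H_{E,3}$ by determinant. Revisiting the four families of conjugacy classes listed in Lemma~\ref{lem:proportion}: the classes $C_{a,b}$ contribute nothing to $H_{E,3}$, so they are irrelevant. The contributions from $D_{-1}$ and $\cC_{-1}$ (the only $a\in\mathbb{F}_3^\times$ that enters, as seen in the proof of Lemma~\ref{lem:proportion}) consist of $1+8=9$ matrices, all with determinant $(-1)^2=1$. For the non-split semisimple classes $\cE_\lambda$ with $\lambda\in\mathbb{F}_9\setminus\mathbb{F}_3$, the determinant is the norm $\lambda\cdot\lambda^3=\lambda^4$, which depends on the order of $\lambda$ in the cyclic group $\mathbb{F}_9^\times$ of order $8$: the two elements of order $4$ give $\lambda^4=1$ and form one Frobenius-conjugate pair, while the four elements of order $8$ give $\lambda^4=-1$ and form two conjugate pairs. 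Since each $\cE_\lambda$ contains $\ell^2-\ell=6$ matrices, we obtain $1\cdot 6=6$ matrices in $H_{E,3}$ with determinant $1$ and $2\cdot 6=12$ with determinant $-1$. Combining, $N_1=9+6=15$ and $N_{-1}=12$, with total $15+12=27$, consistent with Lemma~\ref{lem:proportion}.

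Putting the pieces together: the intersection term is $15^2+12^2=225+144=369$, so $\#H_{E_1,E_2,3}=648+648-369=927$, and hence
\[
\frac{\#H_{E_1,E_2,3}}{\#G_{E_1,E_2,3}}=\frac{927}{1152}=\frac{103}{128},
\]
after dividing both numerator and denominator by $\gcd(927,1152)=9$. The maximally-disjoint hypothesis is used only to identify the image of $\jr$ with the full group $G_{E_1,E_2,3}$, ensuring that the combinatorial proportion computed above coincides with the genuine proportion in the image of the joint Galois representation.
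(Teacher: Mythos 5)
Your count is correct and reproduces exactly the paper's intermediate numbers $\#G_{E_1,E_2,3}=1152$ and $\#H_{E_1,E_2,3}=927$, and your determinant split of the $27$ elements of $H_{E,3}$ into $N_1=15$ and $N_{-1}=12$ is right (the $9$ elements from $\cC_{-1}\cup D_{-1}$ and one non-split class have determinant $1$, the two order-$8$ classes have determinant $-1$). This is essentially the paper's approach: the paper arrives at the same totals by tabulating all pairs of conjugacy classes of $\GL_2(\F_3)$ with matching determinants in an $8\times 8$ table, and your inclusion--exclusion with determinant stratification is simply a more compact bookkeeping of that same count.
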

\begin{proof}
Under the notation of the proof of Lemma~\ref{lem:proportion}, the conjugacy classes of $\GL_2(\F_3)$ are given by the following representatives
\begin{align*}
  C_{a,b}:& \begin{pmatrix}
  1&\\&-1
  \end{pmatrix}\text{ (12 elements each)}\\
  \cC_a:&\begin{pmatrix}
  1&1\\&1
  \end{pmatrix}, \ 
\ta{a}
{\begin{pmatrix}
  -1&1\\&-1
  \end{pmatrix}}\tb{a}
\text{ (8 elements each)}\\
  D_a:&\begin{pmatrix}
  1&\\&1
  \end{pmatrix},\ \ta{b}{\begin{pmatrix}
  -1&\\&-1
  \end{pmatrix}}\tb{b}\text{ (1 element each)}\\
  \cE_\lambda:&\ta{c}\begin{pmatrix}
  &1\\-1&
  \end{pmatrix}\tb{c}\ ,\ \ta{d}\begin{pmatrix}
  &1\\1&-1
  \end{pmatrix}\tb{d}\ ,\ \ta{e} \begin{pmatrix}
  &1\\1&1
  \end{pmatrix}\tb{e}\text{ (6 elements each)},
\end{align*}
where we have highlighted the ones lying inside $H_{E,\ell}$ in grey shade.
Therefore, we can count the elements in $G_{E_1,E_2,\ell}$ via the following table:\vspace{0.2cm}

\begin{center}

{\tiny
\begin{tabular}{|c|c|c|c|c|c|c|c|c|}
   \hline& $\begin{pmatrix}
  1&\\&-1
  \end{pmatrix}$&$\begin{pmatrix}
  1&1\\&1
  \end{pmatrix}$ &$\begin{pmatrix}
  1&\\&1
  \end{pmatrix}$
   & $\cg{\begin{pmatrix}
  -1&1\\&-1
  \end{pmatrix}}$&$\cg{\begin{pmatrix}
  -1&\\&-1
  \end{pmatrix}}$&$\cg{\begin{pmatrix}
  &1\\-1&
  \end{pmatrix}}$&$\cg{\begin{pmatrix}
  &1\\1&-1
  \end{pmatrix}}$&$\cg\begin{pmatrix}
  &1\\1&1
  \end{pmatrix}$\\\hline

  $\begin{pmatrix}
  1&\\&-1
  \end{pmatrix}$ &144\checkmark&96&12&96&12&72&72\cg{\checkmark}&72\cg\checkmark  \\\hline
  $\begin{pmatrix}
  1&1\\&1
  \end{pmatrix}$&96&64\checkmark&8\checkmark&64\cg\checkmark&8\cg\checkmark&48\cg\checkmark&48&48\\\hline
  $\begin{pmatrix}
  1&\\&1
  \end{pmatrix}$&12&8\checkmark&1\checkmark&8\cg\checkmark&1\cg\checkmark&6\cg\checkmark&6&6\\\hline
   $\cg{\begin{pmatrix}
  -1&1\\&-1
  \end{pmatrix}}$&96&64\cg\checkmark&8\cg\checkmark&64\cg\checkmark&8\cg\checkmark&48\cg\checkmark&48&48\\\hline
  $\cg{\begin{pmatrix}
  -1&\\&-1
  \end{pmatrix}}$&12&8\cg\checkmark&1\cg\checkmark&8\cg\checkmark&1\cg\checkmark&6\cg\checkmark&6&6\\\hline
  $\cg{\begin{pmatrix}
  &1\\-1&
  \end{pmatrix}}$&72&48\cg\checkmark&6\cg\checkmark&48\cg\checkmark&6\cg\checkmark&36\cg\checkmark&36&36\\\hline
  $\cg{\begin{pmatrix}
  &1\\1&-1
  \end{pmatrix}}$&72\cg\checkmark&48&6&48&6&36&36\cg\checkmark&36\cg\checkmark\\\hline
  \cg{$\begin{pmatrix}
  &1\\1&1
  \end{pmatrix}$}&72\cg\checkmark&48&6&48&6&36&36\cg\checkmark&36\cg\checkmark\\\hline
\end{tabular}}\vspace{0.2cm}
\end{center}

\noindent
A check mark signifies that the product of the conjugacy classes belongs to $G_{E_1,E_2,\ell}$, with those lying inside $H_{E_1,E_2,\ell}$ highlighted in grey shade.
We deduce that $\#G_{E_1,E_2,\ell}=1152$ (this is exactly half of $\#\GL_2(\Z/\ell\Z)\times\GL_2(\Z/\ell\Z)$ since $G_{E_1,E_2,\ell}$ is the kernel of the group surjective homomorphism from $\GL_2(\Z/\ell\Z)\times\GL_2(\Z/\ell\Z)$ to $\F_\ell^\times$ given by $(A,B)\mapsto \det(AB)$) and $\#H_{E_1,E_2,\ell}=927$.
\end{proof}

\begin{corollary}
\label{cor:stable-rank-2-curves}
Suppose that  $E_1/\QQ$ and $E_2/\QQ$ are two elliptic curves which are maximally disjoint at $3$ and that both $E_1$ and $E_2$ satisfy the hypotheses of Theorem~\ref{thm:growth-rank} with $\ell=3$.
Then, there exists a Chebotarev set of primes $\sP$ with density $\frac{103}{128}$ such that for all $p\in \sP$, either $\rank E_1\left(\QQ(\sqrt[3]{p})\right)=0$ or $\rank E_2\left(\QQ(\sqrt[3]{p})\right)=0$.
\end{corollary}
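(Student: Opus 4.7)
The plan is to reduce the statement to a direct application of Theorem~\ref{thm:growth-rank} for each curve, together with the joint Frobenius counting already carried out in Proposition~\ref{prop:joint3}. First I would define
\[
\sP := \sP(E_1,3) \cup \sP(E_2,3),
\]
where the sets $\sP(E_i,3)$ are the Chebotarev sets attached to $E_i$ by Theorem~\ref{thm:growth-rank}. Since both $E_1$ and $E_2$ satisfy the hypotheses of that theorem at $\ell=3$, for every $3$-power-free integer $a>1$ supported on $\sP(E_i,3)$ we have $\Sel_{3^\infty}(E_i/\QQ(\mu_3,\sqrt[3]a))=0$. Specializing to $a=p$ a prime in $\sP(E_i,3)$, this forces $\rank E_i(\QQ(\mu_3,\sqrt[3]p))=0$, and hence in particular $\rank E_i(\QQ(\sqrt[3]p))=0$. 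So for any $p\in\sP$, at least one of $\rank E_1(\QQ(\sqrt[3]p))$ or $\rank E_2(\QQ(\sqrt[3]p))$ vanishes, giving the qualitative part of the corollary.

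It remains to compute the density of $\sP$. The joint mod-$3$ representation
\[
\jr:G_\QQ\longrightarrow \GL_2(\ZZ/3\ZZ)\times\GL_2(\ZZ/3\ZZ)
\]
has image $G_{E_1,E_2,3}$ by the maximal disjointness hypothesis, and it cuts out the Galois extension $\QQ(E_1[3],E_2[3])/\QQ$. For a prime $p$ unramified in this extension (all but finitely many $p$), the proof of Lemma~\ref{Hep over Gep lemma} shows that $p\in\sP(E_i,3)$ iff the image of $\Frob_p$ in the $i$-th factor lies in $H_{E_i,3}$. Consequently $p\in\sP$ iff $\jr(\Frob_p)\in H_{E_1,E_2,3}$.

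Since $H_{E_1,E_2,3}$ is visibly conjugation-stable in $G_{E_1,E_2,3}$, the Chebotarev density theorem applies and gives
\[
\text{density}(\sP) \;=\; \frac{\#H_{E_1,E_2,3}}{\#G_{E_1,E_2,3}} \;=\; \frac{103}{128},
\]
where the last equality is exactly Proposition~\ref{prop:joint3}. This completes the plan.

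There is essentially no hidden obstacle: the analytic input (Chebotarev) is standard, the arithmetic input (Selmer triviality forcing rank $0$) is already encoded in Theorem~\ref{thm:growth-rank}, and the only genuinely combinatorial step, namely the conjugacy-class enumeration inside $\GL_2(\F_3)\times\GL_2(\F_3)$, has been carried out in Proposition~\ref{prop:joint3}. The one point that deserves care in writing up is simply verifying that $H_{E_1,E_2,3}$ is stable under componentwise conjugation (so that Chebotarev applies), which is immediate from the fact that $H_{E_i,3}$ is conjugation-stable in $\GL_2(\F_3)$ for each~$i$.
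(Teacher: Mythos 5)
Your proposal is correct and follows essentially the same route as the paper, whose proof likewise takes $\sP=\sP(E_1,3)\cup\sP(E_2,3)$ and invokes Theorem~\ref{thm:growth-rank}, Proposition~\ref{prop:joint3}, and the Chebotarev density theorem. You merely make explicit the steps the paper leaves implicit, namely that Selmer triviality forces rank $0$ over $\QQ(\sqrt[3]{p})$ and that, up to finitely many primes, membership in $\sP$ is the conjugation-stable Frobenius condition of landing in $H_{E_1,E_2,3}$ inside the joint image $G_{E_1,E_2,3}$.
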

\begin{proof}
On taking $\sP$ to be the union $\sP(E_1,3)\cup \sP(E_2,3)$, the result follows from the Chebotarev density theorem, Proposition~\ref{prop:joint3} and Theorem~\ref{thm:growth-rank}.
\end{proof}

Analogously, for an integer $n\ge1$, one may define the set $\sP_n$ as the union $\sP(E_1,3)\cup\ldots\cup \sP(E_n,3)$ for elliptic curves $E_1,\ldots, E_n$ that such that the image of the representation $\bar\rho_{E_1,3}\times\bar\rho_{E_n,3}$ is given by $\left\{(A_1,\cdots, A_n)\in \left(\GL_2(\ZZ/3\ZZ)\right)^n:\det(A_1)=\cdots \det(A_n)\right\}$.
\begin{lemma}
The density for $\sP_n$ is
\[
\frac{2\cdot 24^n-12^n-9^n}{2\cdot 24^n}=1-\frac{3^n+4^n}{2^{3n+1}}.
\]
\end{lemma}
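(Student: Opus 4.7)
The plan is to apply the Chebotarev density theorem to the complement $\sP_n^c$. By Lemma~\ref{Hep over Gep lemma} and the hypothesis that the image of the joint representation $\bar\rho_{E_1,3}\times\cdots\times\bar\rho_{E_n,3}$ equals
\[
G_n := \{(A_1,\ldots,A_n)\in\GL_2(\F_3)^n : \det(A_1)=\cdots=\det(A_n)\},
\]
a prime $p$ outside a finite bad set lies in $\sP(E_i,3)$ if and only if the $i$-th projection of $\Frob_p$ lies in $H_{E_i,3}$. Hence $\sP_n = \sP(E_1,3)\cup\cdots\cup\sP(E_n,3)$ corresponds, up to a density-zero set, to those conjugacy classes in $G_n$ at least one of whose components lies in $H_{E_i,3}$; so it suffices to count the complementary tuples, namely those with every component lying in $H_{E_i,3}^c := \GL_2(\F_3)\setminus H_{E_i,3}$.

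First I would record that $|G_n| = 2\cdot 24^n$: one picks a common determinant $d\in\F_3^\times$ (two choices), and then each $A_i$ is any of the $|\SL_2(\F_3)|=24$ matrices of determinant $d$. Next I would reuse the conjugacy-class table from the proof of Proposition~\ref{prop:joint3} to identify the $21$ elements of $H_{E,3}^c$ as the union of $C_{1,-1}$ (12 elements, all of determinant $-1$), $\cC_1$ (8 elements, determinant $1$), and $D_1=\{I\}$ (1 element, determinant $1$). Separating by determinant then yields $9$ elements of determinant $1$ and $12$ of determinant $-1$ in $H_{E_i,3}^c$ for each $i$.

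With this bookkeeping in hand, the count of tuples in $G_n$ all of whose components lie in $H_{E_i,3}^c$ splits by the common determinant: it is $9^n$ when that determinant equals $1$, and $12^n$ when it equals $-1$. The Chebotarev density theorem therefore gives
\[
\text{density}(\sP_n) \;=\; 1 - \frac{9^n + 12^n}{2\cdot 24^n} \;=\; \frac{2\cdot 24^n - 12^n - 9^n}{2\cdot 24^n},
\]
and the equivalent closed form $1 - (3^n + 4^n)/2^{3n+1}$ follows from $9^n=3^{2n}$, $12^n = 3^n 4^n$, $24^n = 3^n 8^n$ after cancelling the common factor of $3^n$. This is a direct multi-variable extension of the $n=2$ calculation already carried out in Proposition~\ref{prop:joint3}; no genuine obstacle arises, and the only point meriting care is tracking the determinant-matching constraint in the count.
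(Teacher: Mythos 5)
Your proof is correct and follows essentially the same route as the paper: compute $\#G_n = 2\cdot 24^n$, count the $n$-tuples all of whose components avoid $H_{E_i,3}$ (the paper gets $12^n$ from the class of $\mathrm{diag}(1,-1)$ and $(8+1)^n=9^n$ from combinations of $\cC_1$ and the identity, which is exactly your determinant-$(-1)$ and determinant-$1$ split), and apply Chebotarev. Your explicit bookkeeping by common determinant is a slightly cleaner packaging of the same count, but it is not a different argument.
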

\begin{proof}
There are in total $48^n$ elements in $\left(\GL_2(\Z/\ell\Z)\right)^n$.
Half of the elements of $\GL_2(\Z/\ell\Z)$ have determinant $1$, while the other half have determinant $2$.
Therefore, there are $\frac{1}{2^{n-1}}\times48^n=2\cdot24^n$ $n$-tuples with matching determinant.
Of those, we exclude the ones without factors in $H_{E,\ell}$, i.e., the $n$-tuples of matrices which are
\begin{enumerate}
\item all in the same conjugacy class as $\begin{pmatrix}
  1&\\&-1
  \end{pmatrix}$, of which there are $12^n$, or
\item in all possible combinations of conjugacy classes represented by $\begin{pmatrix}
  1&1\\&1
  \end{pmatrix}$ and $\begin{pmatrix}
  1&\\&1
  \end{pmatrix}$.
\end{enumerate} 
As for the size of the latter type of matrices, the conjugacy class for $\begin{pmatrix}
  1&1\\&1
  \end{pmatrix}$ has size $8$, while that of the identity matrix is $1$, so keeping in mind the possible positions, the latter count is $$8^n+\binom{n}{1}8^{n-1}+\ldots +\binom{n}{n-1}8+1=(8+1)^n.$$ The result follows.
\end{proof}
In particular, the density of $\sP_n$ approaches $1$ as $n\rightarrow \infty$.
In Table~\ref{densities}, we list the densities for $\sP_n$ for some small $n$.
The cases $n=1$ (resp. $n=2$) correspond to Lemma~\ref{lem:proportion} (resp. Corollary \ref{cor:stable-rank-2-curves}).
\begin{center}
\renewcommand{\arraystretch}{1.25}
\setlength{\aboverulesep}{0pt}
\setlength{\belowrulesep}{0pt}
\begin{longtable}{ |c|ccl| }\caption{Densities of $\sP_n$ for $1\le n\le 7$}\label{densities}\\\toprule
$n$ & \multicolumn{3}{l|}{Density for $\sP_n$} \\ \midrule
1 & $\frac{9}{16}$ & $=$ & $0.5625$ \\
2 & $\frac{103}{128}$ & $=$ & $0.8046875$ \\
3 & $\frac{933}{1024}$ & $=$ & $0.9111328\ldots$\\
4 & $\frac{7855}{8192}$ & $=$ & $0.9588623\ldots$ \\
5 & $ \frac{64269}{65536}$ & $=$ & $0.9806671\ldots $ \\
6 & $ \frac{519463}{524288}$ & $=$ & $0.9907970\ldots $\\ 
7 & $ \frac{4175733}{4194304}$ & $=$ & $0.9955723\ldots $\\[3pt] \bottomrule
\end{longtable}
\renewcommand{\arraystretch}{1}
\end{center}

\subsection{Rank jumps for two elliptic curves}
\label{set Q two curves}
Next, we study families of real quadratic fields where the Mordell--Weil ranks of two elliptic curves increase under base-change simultaneously.
\begin{lemma}
\label{proportion of set Q for two ec}
Let $K$ be an imaginary quadratic field.
Suppose that there exist two elliptic curves $E_1/\QQ$ and $E_2/\QQ$ such that their mod $2$ representations are surjective.
Further suppose that $E_1, E_2$ are maximally disjoint at $2$ and that the Galois extension $\Q\left(E_1[2],E_2[2]\right)/\Q$ does not contain $\Q(\sqrt{-1})$ or $K$.
Then, 
\[
\sQ_K(E_1,E_2) = \left\{q\colon q\equiv -1\pmod{4}, \ q \text{ splits in } K, \ a_q(E_1)\equiv a_q(E_2)\equiv 1\pmod{2} \right\}
\]
is a Chebotarev set of primes with density $\frac{1}{36}$.
\end{lemma}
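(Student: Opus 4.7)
The plan is to combine the strategy of Lemma~\ref{GFP 5.1} with the observation that, at $\ell=2$, maximal disjointness produces a particularly clean product structure on the relevant Galois group. I would apply the Chebotarev density theorem to the compositum
\[
L := \Q(E_1[2], E_2[2], \sqrt{-1}, \sqrt{d_K}),
\]
where $d_K$ denotes the discriminant of $K$. The first step is to pin down $\Gal(L/\Q)$. Since every element of $\GL_2(\F_2)$ has determinant $1$, the group $G_{E_1,E_2,2}$ coincides with the full product $\GL_2(\F_2)\times\GL_2(\F_2)$, so maximal disjointness together with surjectivity of the mod-$2$ representations forces $\Gal(\Q(E_1[2], E_2[2])/\Q)\cong S_3\times S_3$. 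Combined with the hypotheses that neither $\Q(\sqrt{-1})$ nor $K$ lies in $\Q(E_1[2], E_2[2])$, and the implicit fact that $K\neq\Q(\sqrt{-1})$ (otherwise no prime $q\equiv -1\pmod 4$ splits in $K$ and $\sQ_K(E_1,E_2)$ is empty up to finitely many primes), one then obtains
\[
\Gal(L/\Q)\cong S_3\times S_3\times \Z/2\Z\times \Z/2\Z.
\]

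Next I would translate each of the four conditions defining $\sQ_K(E_1,E_2)$ into a condition on $\Frob_q$ inside this direct product. For a prime $q$ unramified in $L$: the congruence $q\equiv -1\pmod 4$ singles out the non-trivial coset of $\Gal(\Q(\sqrt{-1})/\Q)$, a proportion $1/2$; splitting of $q$ in $K$ picks out the trivial coset of $\Gal(K/\Q)$, again a proportion $1/2$; and $a_q(E_i)\equiv 1\pmod 2$ selects those $\Frob_q|_{\Q(E_i[2])}\in \GL_2(\F_2)\cong S_3$ of trace $1$, which are precisely the two $3$-cycles, a proportion $2/6=1/3$ in each $S_3$-factor. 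Because $\Gal(L/\Q)$ is a direct product, these four constraints cut out a conjugacy-stable subset whose density in $\Gal(L/\Q)$ is
\[
\tfrac{1}{2}\cdot\tfrac{1}{2}\cdot\tfrac{1}{3}\cdot\tfrac{1}{3}=\tfrac{1}{36}.
\]
The Chebotarev density theorem then delivers the claimed density for $\sQ_K(E_1,E_2)$.

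The main subtlety in this plan is the linear disjointness of $\Q(\sqrt{-1})\cdot K$ from $\Q(E_1[2], E_2[2])$. The biquadratic field $\Q(\sqrt{-1},\sqrt{d_K})$ has three quadratic subfields, and the stated hypotheses only exclude two of them from sitting inside $\Q(E_1[2],E_2[2])$; strictly speaking one must also rule out the \emph{twin} $\Q(\sqrt{-d_K})$. This is precisely the same issue that already appears in the proof of Lemma~\ref{GFP 5.1}, and it is either absorbed into a tacit non-degeneracy assumption or verified directly for the specific auxiliary elliptic curves used in the subsequent applications. Modulo this verification, the argument reduces entirely to Chebotarev, with no further input required.
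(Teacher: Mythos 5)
Your proof is correct and takes essentially the same route as the paper: apply Chebotarev to the joint mod-$2$ image, which by maximal disjointness at $2$ is all of $\GL_2(\F_2)\times\GL_2(\F_2)\cong S_3\times S_3$, note that $a_q(E_i)\equiv 1\pmod 2$ picks out the two trace-$1$ (order-$3$) elements in each factor for a proportion $\tfrac{4}{36}$, and let the conditions $q\equiv -1\pmod 4$ and $q$ split in $K$ each contribute an independent factor $\tfrac12$. The subtlety you flag — that strictly one must also exclude the third quadratic subfield $\Q(\sqrt{-d_K})$ of $\Q(\sqrt{-1},\sqrt{d_K})$ from $\Q(E_1[2],E_2[2])$, else the set could even be empty — is a fair point that the paper's proof also leaves tacit; it holds in the paper's applications, where the quadratic subfields of $\Q(E_1[2],E_2[2])$ have discriminants $-11$, $-23$, and $253$.
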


\begin{proof}
We argue as in \cite[Proof of Lemma 5.1(ii)]{GFP}, who treated the case of one elliptic curve and obtained a density of $\frac{1}{6}$.
For $E=E_1$ or $E=E_2$, the condition $a_q(E)\equiv 1 \pmod{2}$ corresponds to the image of $\Frob_q$ being the two elements $\mat{1 & 1 \\ 1 & 0}$ and $\mat{0 & 1 \\ 1 & 1}$, i.e., two of the six possible matrices of $\GL_2(\F_2)\cong S_3$.
Note that they each have determinant $-1$, so the maximally disjoint condition alone would imply that all four combinations of these two matrices occur in the image of $\bar{\rho}_{E_1,E_2,2}$ giving a proportion of $\frac{4}{36}$.
The other conditions cut down the proportion by a factor of $\frac{1}{2}$ each by the Chebotarev density theorem and the fact that $q\equiv -1 \pmod{4}$ is a splitting condition in $\Q(\sqrt{-1})$, which alongside $K$ we assumed to not be in $\Q\left(E_1[2],E_2[2]\right)$.
We thus obtain a density of $\frac{1}{2}\times\frac{1}{2}\times\frac{1}{9}$, i.e., one-third the proportion compared to when we work with one elliptic curve.
\end{proof}

\begin{corollary}\label{applykl}Suppose that there are two elliptic curves $E_1/\Q$ and $E_2/\Q$ satisfying the hypotheses of the above Lemma \ref{proportion of set Q for two ec} and of Corollary \ref{to be use for the real quadratic version} for some imaginary quadratic field $K=\Q(\sqrt{-D})$.
Then for any $q\in \sQ_K(E_1,E_2)$, $\rank E_1(\Q(\sqrt{Dq}))=1=\rank E_2(\Q(\sqrt{Dq}))$.
\end{corollary}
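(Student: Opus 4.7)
\bigskip

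\noindent\textbf{Proof proposal for Corollary~\ref{applykl}.}
The plan is to reduce the two-curve statement to two parallel applications of Corollary~\ref{to be use for the real quadratic version}, one for $E_1$ and one for $E_2$, using the set $\sQ_K(E_1,E_2)$ as a common source of twisting primes.

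First, I would compare the defining conditions: for $q\in\sQ_K(E_1,E_2)$ one has $q$ split in $K$ and $a_q(E_i)\equiv1\pmod2$ for each $i$, and (excluding finitely many primes dividing $2N_1N_2$) this yields $q\in\sQ_K(E_i)$ for both $i=1,2$. Since $q\equiv-1\pmod4$, the integer $d:=-q$ is a square-free integer with $d\equiv1\pmod4$ whose support lies in $\sQ_K(E_i)$.

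Next, I would apply Corollary~\ref{to be use for the real quadratic version} to each $E_i$ with this $d$. The hypotheses of that corollary are assumed for both curves by the statement. The conclusion gives $\rank E_i^{(d\cdot d_K)}(\QQ)=1$. Mirroring the bookkeeping in the proof of Lemma~\ref{set Q for two imaginary quadratic one EC negative disc}, with $K=\QQ(\sqrt{-D})$ the twist by $d\cdot d_K$ produces the same elliptic curve (up to a square twist, so the same rank) as the twist whose associated quadratic field is $\QQ(\sqrt{Dq})$. Hence $\rank E_i^{(Dq)}(\QQ)=1$ for $i=1,2$.

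Finally, I would invoke the standard rank decomposition for quadratic base-change,
\[
\rank E_i\bigl(\QQ(\sqrt{Dq})\bigr)=\rank E_i(\QQ)+\rank E_i^{(Dq)}(\QQ).
\]
The hypothesis $\rank E_i(\QQ)=0$ (part (i) of Corollary~\ref{to be use for the real quadratic version}) then gives $\rank E_i(\QQ(\sqrt{Dq}))=1$ for each $i$, as desired. There is no real obstacle here: the content lies entirely in Corollary~\ref{to be use for the real quadratic version} and the observation that $\sQ_K(E_1,E_2)$ sits inside $\sQ_K(E_i)$ for each $i$. The only mild care needed is the sign/congruence bookkeeping that identifies the twist $E_i^{(d\cdot d_K)}$ coming out of Kriz--Li with the twist $E_i^{(Dq)}$ corresponding to the real quadratic field $\QQ(\sqrt{Dq})$, and this is exactly the same manipulation carried out in the proof of Lemma~\ref{set Q for two imaginary quadratic one EC negative disc}.
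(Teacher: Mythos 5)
Your proposal is correct and follows essentially the same route as the paper: the paper applies the two assertions of Theorem~\ref{KL-result} (of which Corollary~\ref{to be use for the real quadratic version} is just the packaged form) to each of $E_1$ and $E_2$ with $d=-q$, identifies the twist $E_i^{(d\cdot d_K)}$ with $E_i^{(Dq)}$, and concludes via the quadratic base-change rank formula exactly as you do. Your extra care about $d_K$ versus $-D$ (up to a square twist) and about excluding the finitely many $q\mid 2N_1N_2$ is a harmless refinement of the same argument.
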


\begin{proof}
We argue as in Corollary \ref{proportion of set Q for two ec} to conclude that the quadratic twists of $E_1$ and $E_2$ by $-D$ have $\Q$-rational points of rank $1$.
Indeed, we can apply the last assertion of Theorem \ref{KL-result} to see that $\rank E(\Q)=\rank E^{(-q)}(\Q)=0$ and from the first assertion, conclude that $\rank E^{((-q)\cdot(-D))}(\Q)=1$ for each of $E=E_1$ and $E=E_2$.
Hence, $\rank E_1(\Q(\sqrt{Dq}))=1=\rank E_2(\Q(\sqrt{Dq}))$.
\end{proof}

This when combined with Corollary~\ref{cor:stable-rank-2-curves} proves Theorem~\ref{thmC}.

\begin{theorem}
\label{thm: real quadratic two EC two K}
There are explicit Chebotarev sets of primes $\sP$ and $\sQ$ (resp. $\sP$ and $\sQ'$) of density $\frac{103}{128}$ and $\frac{1}{36}$ such that for all $(p,q)\in \sP\times\sQ$ (resp. $(p,q')\in \sP\times\sQ'$), the analogue of \ref{H10} is unsolvable for the ring of integers of $L=\Q(\sqrt[3]{p},\sqrt{7q})$ (resp. $L'=\Q(\sqrt[3]{p},\sqrt{615q'})$).
\end{theorem}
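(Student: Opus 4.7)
The plan is to combine the two-curve rank-stabilization result of Corollary~\ref{cor:stable-rank-2-curves} with the two-curve rank-jump result of Corollary~\ref{applykl}. For each $D\in\{7,615\}$, the first task is to exhibit a pair of elliptic curves $E_1, E_2$ over $\QQ$ satisfying the following bundle of conditions simultaneously: (i) both satisfy the hypotheses of Theorem~\ref{thm:growth-rank} with $\ell=3$ (in particular, trivial mod $3$ Selmer group over $\QQ(\mu_3)$, good reduction at $3$, and no $3$-divisibility obstruction from Tamagawa numbers or $\#\widetilde{E}_3(\F_3)$); (ii) $E_1$ and $E_2$ are maximally disjoint at $\ell=3$; (iii) both have trivial rational $2$-torsion, rank $0$ over $\QQ$, negative minimal discriminant, odd Tamagawa number at $2$, and satisfy the Heegner hypothesis for $K=\Q(\sqrt{-D})$; (iv) Hypothesis~\eqref{star} holds for each $E_i$ relative to $K=\Q(\sqrt{-D})$; (v) the mod-$2$ representations $\bar\rho_{E_1,2}$ and $\bar\rho_{E_2,2}$ are surjective, the curves are maximally disjoint at $\ell=2$, and $\Q(E_1[2],E_2[2])/\Q$ contains neither $\Q(\sqrt{-1})$ nor $\Q(\sqrt{-D})$.

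A natural starting point is $E_1=\href{https://www.lmfdb.org/EllipticCurve/Q/704/d/1}{704g1}$, for which conditions (i), (iii), (iv) were already checked in Lemma~\ref{SAGE lemma 704g1} for each $D\in\fD$. The second curve $E_2$ must then be found by a computer search through LMFDB, selecting candidates that share conditions (i), (iii), (iv) for the given $D$, and verifying that the joint mod $\ell$ image is maximal at both $\ell=2$ and $\ell=3$; the latter joint-image computation is exactly what the MAGMA code of Appendix~\ref{app:Harris} performs. Since conditions (iv) depend on $D$, different pairs of curves may be needed for $D=7$ and $D=615$, but the density of $\sP$ produced by either pair is the same.

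Once such a pair is in hand, the proof assembles as follows. Corollary~\ref{cor:stable-rank-2-curves} yields a Chebotarev set $\sP := \sP(E_1,3)\cup\sP(E_2,3)$ of density $\tfrac{103}{128}$ with the property that every $p\in\sP$ satisfies $\rank E_i(\QQ(\sqrt[3]{p}))=0$ for at least one $i\in\{1,2\}$. Corollary~\ref{applykl}, applied with $K=\Q(\sqrt{-D})$, yields a Chebotarev set $\sQ_D:=\sQ_{\Q(\sqrt{-D})}(E_1,E_2)$ of density $\tfrac{1}{36}$ (by Lemma~\ref{proportion of set Q for two ec}) such that for every $q\in\sQ_D$, both $E_1$ and $E_2$ have Mordell--Weil rank $1$ over $\Q(\sqrt{Dq})$. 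Fix $(p,q)\in\sP\times\sQ_D$ and write $F=\Q(\sqrt[3]{p})$, $K'=\Q(\sqrt{Dq})$, $L=F\cdot K'$. Choose the index $i$ for which $\rank E_i(F)=0$; then $E_i$ has rank $0$ over $F$ and rank $1$ over $K'$, so Proposition~\ref{prop: GFP 3.3} implies $L/F$ is integrally Diophantine. Since $F$ has a unique complex place, $F/\QQ$ is integrally Diophantine by the result of Phelps--Shlapentokh--Videla cited in the introduction; applying the Transitive Property~\ref{Thm:transitivity} and the reduction recalled at the start of Section~\ref{section: strategy} then gives the unsolvability of the analogue of \ref{H10} for $\cO_L$.

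The main obstacle is computational, namely the search for $E_2$ in step (v). The joint maximality at $\ell=2$ and $\ell=3$ is a strong entanglement condition, and it must coexist with the analytic condition (iv), which in turn was already restrictive enough (see Lemma~\ref{SAGE lemma 704g1}) to cut the list of admissible $D$ to $\fD$; requiring the condition for both $E_1$ and $E_2$ simultaneously is what forces the small exceptional set $\{7,615\}$. Everything else in the argument is a formal combination of results already developed in Sections~\ref{section: prelim}--\ref{section: pair of ec}.
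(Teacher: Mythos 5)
Your strategy coincides with the paper's: take $E_1=704g1$, find a second curve $E_2$ maximally disjoint from it at $2$ and $3$ and satisfying the same analytic/arithmetic conditions, then combine Corollary~\ref{cor:stable-rank-2-curves} (giving $\sP=\sP(E_1,3)\cup\sP(E_2,3)$ of density $\frac{103}{128}$) with Lemma~\ref{proportion of set Q for two ec} and Corollary~\ref{applykl} (giving $\sQ_{\Q(\sqrt{-D})}(E_1,E_2)$ of density $\frac{1}{36}$ and simultaneous rank-one jumps over $\Q(\sqrt{Dq})$), and finish with Proposition~\ref{prop: GFP 3.3}, the fact that $\Q(\sqrt[3]{p})/\Q$ is integrally Diophantine, and the \ref{Thm:transitivity}. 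Your checklist of conditions (i)--(v) is exactly what needs to be verified, and your assembly of the Diophantine argument is correct.

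The gap is that you never establish that a suitable second curve exists: you defer it to ``a computer search through LMFDB,'' which is precisely the substantive content of this theorem, since the statement asserts the existence of \emph{explicit} Chebotarev sets and these sets are defined in terms of the actual pair of curves. The paper closes this by exhibiting $E_2=1472j1$ and verifying, via the MAGMA code of Appendix~\ref{app:Harris} and the SAGE computations of Appendices~\ref{S:704GFP} and~\ref{app: Hyp star}, maximal disjointness at both $2$ and $3$, the hypotheses of Theorem~\ref{thm:growth-rank} at $\ell=3$, the requirements of Theorem~\ref{KL-result}/Corollary~\ref{to be use for the real quadratic version}, Hypothesis~\eqref{star} for both $K=\Q(\sqrt{-7})$ and $K=\Q(\sqrt{-615})$, and that the quadratic subfields of $\Q(E_1[2],E_2[2])$ have conductors $-11$, $-23$, $253$ (so neither $\Q(\sqrt{-1})$ nor $\Q(\sqrt{-D})$ is contained in it). A smaller point: the paper uses the \emph{same} pair for $D=7$ and $D=615$, which is what makes the single set $\sP$ in the statement (and the ``independent of $D$'' claim of Theorem~\ref{thmC}) literally correct; your allowance of different pairs for the two values of $D$ would only give two sets of equal density, not one common $\sP$. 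Without naming a concrete $E_2$ (or proving abstractly that one exists), the argument remains conditional and does not yet prove the theorem.
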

\begin{proof}
We combine Corollary \ref{cor:stable-rank-2-curves} and Lemma \ref{proportion of set Q for two ec} with Corollary \ref{applykl}.
We work with the elliptic curves $E_1=$\href{https://www.lmfdb.org/EllipticCurve/Q/704/d/1}{$704g1$} and $E_2=$\href{https://www.lmfdb.org/EllipticCurve/Q/1472/e/1}{$1472j1$}.
Using the code in Appendix~\ref{app:Harris}, we verify maximal disjointedness at $\ell=3$, and check that each elliptic curve satisfies the other required conditions, i.e.,
\begin{enumerate}
\item the hypotheses of Lemma \ref{proportion of set Q for two ec} and 
\item those of Theorem \ref{KL-result}.
\end{enumerate}

As for the former conditions, we know that $E_1$ and $E_2$ are maximally disjoint at $2$, as can be verified in Appendix~\ref{app:Harris}.
One can verify that $\Q(E_1[2],E_2[2])$ contains three degree $2$ subfields of conductors $-11$, $-23$, and $253$.

To check the conditions of Theorem \ref{KL-result},
we refer the reader to the SAGE code in Appendix~\ref{S:704GFP}, where we choose $K=\Q(\sqrt{-7})$ (resp. $K=\Q(\sqrt{-615})$)
We let $\sP=\sP(E_1,3)\cup \sP(E_2,3)$ (defined in Theorem \ref{thm:growth-rank}), and let $\sQ:=\sQ_{\Q(\sqrt{-7})}(E_1,E_2)$ and $\sQ':=\sQ_{\Q(\sqrt{-615})}(E_1,E_2)$ as in Lemma \ref{proportion of set Q for two ec}.
From this we know that for $p \in \sP$ and $q \in \sQ$, either $\rank E_1\left(\QQ(\sqrt[3]{p})\right)=0$ or $\rank E_2\left(\QQ(\sqrt[3]{p})\right)=0$ from Corollary~\ref{cor:stable-rank-1-curve}, and for $q \in \sQ$, we have $\rank E_1(\Q(\sqrt{Dq}))=\rank E_2(\Q(\sqrt{Dq}))=1$ for $D=7$ and for $D=615$.
We can now apply Proposition \ref{prop: GFP 3.3} to at least one of $E_1$ or $E_2$.
The result follows then from \ref{Thm:Shlapentokh}.
\end{proof}

\begin{remark}\label{Remark:future}
We included only two elliptic curves, obtaining the density corresponding to $n=2$ in Table \ref{densities}.
With enough computing power, it should in principle be possible to generalize the Theorem~\ref{thm: real quadratic two EC two K} further using  $n$ auxiliary elliptic curves satisfying the appropriate hypotheses for an arbitrary $n$.
It would be interesting to see how big $n$ could be made with the most powerful computers available.
The analogue of the set $\sP$ should then have density $1-\frac{3^n+4^n}{2^{3n+1}}$ (cf. Table \ref{densities}, where the analogue of $\sP$ is called $\sP_n$). As for the analogue of the set $\sQ$ built out of $n$ curves,
 the "worst case scenario" would occur when the image of the representation $\bar\rho_{E_1,\ldots, E_n,2}=\bar\rho_{E_1,2}\times\cdots\times \bar\rho_{E_n,2}$ is $\left(\GL_2(\ZZ/2\ZZ)\right)^n$. Its density  would be $ \frac{1}{4}\times\frac{1}{3^n}$.
However, it is possible the density is higher and does not converge to zero.
For example, if the image of $\bar\rho_{E_1,\ldots, E_n,2}$ is isomorphic to the diagonal embedding of $\GL_2(\ZZ/2\ZZ)$, then the  density would be $\frac{1}{12}$.
\end{remark}

\begin{remark}
Suppose that one could find an elliptic curve other than  \href{https://www.lmfdb.org/EllipticCurve/Q/557/b/1}{$557b1$}, which has positive discriminant and satisfies the hypotheses of Theorem~\ref{thm:growth-rank} with $\ell=3$.
If furthermore the new curve and \href{https://www.lmfdb.org/EllipticCurve/Q/557/b/1}{$557b1$} are maximally disjoint at $3$, then one could obtain a similar result to Theorem~\ref{thm: real quadratic two EC two K}, where one improves the density of $\sP$ in Theorem~\ref{thm: improve P and Q of GFP} to $\frac{103}{128}$.
The density of the resulting $\sQ$ would depend on the joint image of the mod $2$ representations of the two curves.
Due to the limited computing power we have had access to, we have not been able to find such a curve.
\end{remark}
\section{Congruent number elliptic curves and \texorpdfstring{\ref{H10}}{}}
\label{section: Congruent Number Curves}
We study a different version of Theorem~\ref{thm: real quadratic two EC two K} using congruent number elliptic curves.
One advantage of this approach is that one may make use of the  recent breakthroughs on Goldfeld's conjecture for these curves to study the set $\sQ$.

\subsection{Congruent number elliptic curves}

In this section we study the congruent number elliptic curve $E=\href{https://www.lmfdb.org/EllipticCurve/Q/32/a/3}{32a2}$, defined by the equation $y^2=x^3-x$.
Note that $E$ has good supersingular reduction at $3$ with $a_3(E)=0$.

Similar to the curve \href{https://www.lmfdb.org/EllipticCurve/Q/704/d/1}{704g1} studied in Lemma~\ref{SAGE lemma 704g1}, we can check that $E$ satisfies the hypotheses of Theorem~\ref{thm:growth-rank} for $\ell=3$ using \cite{Sage}.

We now calculate the density of the set $\sP(E,3)$ given in Theorem~\ref{thm:growth-rank}.

\begin{lemma}\label{lem:H/G}
For $E=\href{https://www.lmfdb.org/EllipticCurve/Q/32/a/3}{32a2}$, we have
\[
\frac{\# H_{E,3}}{\# G_{E,3}}=\frac{11}{16}.
\]
In other words, the set $\sP(E,3)$ defined in the statement of Theorem~\ref{thm:growth-rank} is a Chebotarev set of density $\frac{11}{16}$.
\end{lemma}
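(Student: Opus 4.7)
The plan is to identify $G_{E,3}$ with the normalizer $N$ of a non-split Cartan subgroup of $\GL_2(\F_3)$ (which has order $16$), and then enumerate the subset $H_{E,3}\subseteq N$ directly. Once this gives $\#H_{E,3} = 11$, the density statement for $\sP(E,3)$ is immediate from Lemma~\ref{Hep over Gep lemma} and the Chebotarev density theorem.

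First, since $E\colon y^2=x^3-x$ has complex multiplication by $\Z[i]$ and $-1$ is a non-square modulo $3$, the prime $3$ is inert in $\Q(i)$, so $\Z[i]/3\cong\F_9$ and $E[3]$ is free of rank one over this ring. Standard CM theory places $\bar\rho_{E,3}(G_{\Q(i)})$ inside the commutant $C\cong\F_9^\times$ of the $\Z[i]$-action on $E[3]$ (a non-split Cartan), while any Galois element lifting the nontrivial element of $\Gal(\Q(i)/\Q)$ acts on $\Z[i]$ by conjugation and thus contributes to $N\setminus C$. A direct check using \cite{Sage} (or a Frobenius computation at a handful of auxiliary primes) then verifies that the image fills out all of $N$, giving $\#G_{E,3}=16$.

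Next, I would enumerate $H_{E,3}$ via the model $E[3]\cong\F_9$, in which $C$ acts by $x\mapsto\alpha x$ for $\alpha\in\F_9^\times$ and $N\setminus C$ by $x\mapsto\alpha x^3$. For $g_\alpha\in C$ the eigenvalues are $\alpha,\alpha^3$ and the determinant is $\alpha^4$, so $f(g_\alpha)\in\{1,2\}$ according to whether $\alpha^4=1$ or $\alpha^4=2$; a brief case analysis yields $g_\alpha\in H_{E,3}$ precisely when either $\alpha\ne 1$ with $\alpha^4=1$ ($3$ elements) or $\alpha^4=2$ ($4$ elements), contributing $7$ in total. For $g\colon x\mapsto\alpha x^3$ in $N\setminus C$, one computes $\trace g=0$ and $\det g=-N_{\F_9/\F_3}(\alpha)$, so the eigenvalues are $\pm\sqrt{N(\alpha)}$; when $N(\alpha)=1$ the eigenvalues are $\pm 1$ with $f(g)=2$ and sum $1+1=2$, so these $4$ elements are excluded, whereas when $N(\alpha)=2$ the eigenvalues lie in $\F_9\setminus\F_3$ with $f(g)=1$ and sum $0\ne 2$, contributing the remaining $4$ elements of $H_{E,3}$. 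Summing yields $\#H_{E,3}=7+4=11$, and hence the ratio $11/16$.

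The main obstacle is Step~1: showing that the image of $\bar\rho_{E,3}$ is exactly $N$ rather than a proper subgroup. CM theory forces containment in $N$, but surjectivity onto $N$ requires either the explicit SAGE check that is in any case needed to verify the hypotheses of Theorem~\ref{thm:growth-rank}, or equivalently the surjectivity of the mod-$3$ reduction of the Hecke character attached to $E/\Q(i)$ onto $\F_9^\times$.
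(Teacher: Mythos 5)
Your proposal is correct and in substance is the same argument as the paper's: the paper's proof simply exhibits the $16$ matrices of the (maximal) mod-$3$ image --- i.e.\ the normalizer of a non-split Cartan --- and shades the $11$ of them lying in $H_{E,3}$, which agrees exactly with your count (the excluded elements being the identity together with the four elements of trace $0$ and determinant $2$, matching your $C$- and $N\setminus C$-case analysis via norms in $\F_9$). As in your Step 1, the paper takes the maximality of the image of $\bar\rho_{E,3}$ as a computed fact rather than proving it, so the verification you flag is the same computational input the paper relies on and is not a gap relative to it.
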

\begin{proof}
The curve $E$ has complex multiplication and the image of $\bar\rho_{E,3}$ is maximal, given by the following 16 matrices:
\begin{align*}
  \begin{pmatrix}
  1&\\&1
  \end{pmatrix},
  \begin{pmatrix}
  2&2\\&1
  \end{pmatrix},
  \begin{pmatrix}
  1&1\\&2
  \end{pmatrix},
  \begin{pmatrix}
  2&\\2&1
  \end{pmatrix},
  \begin{pmatrix}
  1&\\1&2
  \end{pmatrix}, \text{ and }
  \ta{1}\begin{pmatrix}
  2&\\&2
  \end{pmatrix},
  \begin{pmatrix}
  2&2\\2&1
  \end{pmatrix},
  \begin{pmatrix}
  1&1\\1&2
  \end{pmatrix},\\
  \ta{2}\begin{pmatrix}
  2&1\\1&
  \end{pmatrix},
  \begin{pmatrix}
  1&2\\2&
  \end{pmatrix},
  \begin{pmatrix}
  &2\\2&2
  \end{pmatrix},
  \begin{pmatrix}
  &1\\1&1
  \end{pmatrix},
  \begin{pmatrix}
  &2\\1&
  \end{pmatrix},
  \begin{pmatrix}
  &1\\2&
  \end{pmatrix},
  \begin{pmatrix}
  1&2\\2&2
  \end{pmatrix},
  \begin{pmatrix}
  2&1\\1&1
   \end{pmatrix}\tb{1}\tb{2},
\end{align*}
with the ones lying inside $H_{E,3}$ highlighted in grey shade.
\end{proof}

\begin{remark}
Curiously, even though $\bar\rho_{E,3}$ is not surjective, we are actually getting a larger proportion than the case where the representation is surjective studied in Lemma~\ref{lem:proportion}.
\end{remark}

A straightforward application of Theorem~\ref{thm:growth-rank} shows that
\begin{corollary}
\label{cor:Pcong}
There exists a Chebotarev set $\sP_\con$ of primes of density $\frac{11}{16}$ such that the Mordell--Weil rank of $E/\QQ(\sqrt[3]{p})$ is $0$ for all $p\in\sP_\con$.
\end{corollary}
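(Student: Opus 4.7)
The plan is to view Corollary~\ref{cor:Pcong} as a direct application of Theorem~\ref{thm:growth-rank} once the hypotheses are verified for the specific curve $E=\href{https://www.lmfdb.org/EllipticCurve/Q/32/a/3}{32a2}$ at $\ell=3$, combined with the proportion computation already carried out in Lemma~\ref{lem:H/G}. First, I would verify the three numerical hypotheses of Theorem~\ref{thm:growth-rank} for $E$ at $\ell=3$: (1) good reduction at $3$, which is explicit since $E$ has conductor $32$ and was noted above to be supersingular at $3$; (2) triviality of $\Sel_3(E/\QQ(\mu_3))$, which is a finite computation (e.g. via descent in \cite{Sage}); and (3) the divisibility condition $3\nmid \Tam(E/\QQ(\mu_3))\cdot\#\widetilde{E}_3(\F_3)$, where $\widetilde{E}_3(\F_3)$ has four points (since $a_3(E)=0$) and the local Tamagawa factors at the two primes of $\QQ(\mu_3)$ above $2$ are small powers of $2$, both of which are checkable from the LMFDB data or a short \cite{Sage} script analogous to the one in \S\ref{S:704GFP}.

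Once these are in hand, I would set $\sP_\con := \sP(E,3)$, where $\sP(E,3)$ is the Chebotarev set defined in Theorem~\ref{thm:growth-rank}. By Lemma~\ref{Hep over Gep lemma}, the condition ``$a_v(E)\not\equiv 2\pmod 3$'' defining $\sP(E,3)$ cuts out precisely the Frobenius elements lying in $H_{E,3}\subset G_{E,3}$, so by the Chebotarev density theorem its density equals $\#H_{E,3}/\#G_{E,3}=\tfrac{11}{16}$, which is exactly what Lemma~\ref{lem:H/G} provides.

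Next, for each prime $p\in\sP_\con$, I would take $a=p$, which is certainly a $3$-power-free integer supported on $\sP(E,3)$, so Theorem~\ref{thm:growth-rank} yields
\[
\Sel_{3^\infty}\bigl(E/\QQ(\mu_3,\sqrt[3]{p})\bigr)=0.
\]
The vanishing of the $3^\infty$-Selmer group forces $E(\QQ(\mu_3,\sqrt[3]{p}))\otimes\QQ_3/\Z_3=0$ via the Kummer sequence, and hence $\rank E(\QQ(\mu_3,\sqrt[3]{p}))=0$. Since $\QQ(\sqrt[3]{p})\subset \QQ(\mu_3,\sqrt[3]{p})$, a fortiori $\rank E(\QQ(\sqrt[3]{p}))=0$, which is the desired conclusion.

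There is no serious obstacle: the deep input (the control of ramified local terms via \cite{Bra14}) is encapsulated in Theorem~\ref{thm:growth-rank}, and the combinatorics of $H_{E,3}/G_{E,3}$ has been handled in Lemma~\ref{lem:H/G}. The only mildly delicate point is hypothesis (2) above, namely computing $\Sel_3(E/\QQ(\mu_3))$; this is where supersingularity at $3$ could in principle cause trouble if one were to attempt an Iwasawa-theoretic route, but since Theorem~\ref{thm:growth-rank} is formulated directly in terms of the finite Selmer group and imposes no ordinary hypothesis, it suffices to run a direct descent over the degree-$2$ field $\QQ(\mu_3)$, which is computationally straightforward.
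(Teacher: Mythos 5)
Your proposal is correct and follows essentially the same route as the paper: set $\sP_\con=\sP(E,3)$ for $E=32a2$, verify the hypotheses of Theorem~\ref{thm:growth-rank} computationally, and obtain the density $\frac{11}{16}$ from Lemma~\ref{lem:H/G} together with the Chebotarev density theorem, concluding rank $0$ over $\QQ(\sqrt[3]{p})$ from the vanishing of the Selmer group over $\QQ(\mu_3,\sqrt[3]{p})$. The only (inessential) slip is in your verification sketch of hypothesis (3): since $2$ is inert in $\QQ(\mu_3)$ there is a single prime above $2$, not two, though this does not affect the argument.
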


\subsection{Rank jump in real quadratic fields}
We now turn our attention to studying real quadratic fields over which $E$ has positive rank.
The congruent number problem predicts that for all positive square-free integers $n$ that are congruent to $5,6,7\mod 8$, the quadratic twist of $E$ given by 
\[
E^{(n)}:ny^2=x^3-x
\]
has positive Mordell--Weil rank.
In particular, it predicts that $\rank E\left(\QQ(\sqrt{n})\right)$ is positive.

\begin{definition}\label{def:Qcong}
Let $\sQ_\con$ be the set of primes $q$ such that the elliptic curve $qy^2=x^3-x$ has positive Mordell--Weil rank over $\QQ$ (or equivalently $q$ is a congruent number).
\end{definition}

A result announced by Smith in \cite[Theorem~1.5]{smith} says that $E^{(n)}$ has positive Mordell--Weil over $\QQ$ for at least $62.9\%$ of $n$ that are congruent to $5$ and $7$ modulo $8$.
In particular, it says that $\sQ_\con$ has density at least $31.45\%$.
More recently, Kriz \cite{kriz} announced a proof of Goldfeld's conjecture for the family of congruent number elliptic curves.
In particular, it says that the density of $\sQ_\con$ is precisely $\frac{1}{2}$.

We conclude this section with the following result, which is Theorem~\ref{thmD} in the introduction.

\begin{theorem}\label{thm:cong}
Let $\sP_\con$ and $\sQ_\con$ be the set of primes defined as in Corollary~\ref{cor:Pcong} and Definition~\ref{def:Qcong}.
Then, the analogue of \ref{H10} is unsolvable for the ring of integers of $L=\Q(\sqrt[3]{p},\sqrt{q})$.
\end{theorem}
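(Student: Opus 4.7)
The plan is to follow exactly the three-step template laid out in Section~\ref{section: strategy} and employed in the proofs of Theorems~\ref{thm: improve P and Q of GFP},~\ref{thm: real quadratic one EC two K}, and~\ref{thm: real quadratic two EC two K}, with the congruent number curve $E=\href{https://www.lmfdb.org/EllipticCurve/Q/32/a/3}{32a2}$ playing the role of the auxiliary curve. Fix $p\in\sP_\con$ and $q\in\sQ_\con$, and set $F=\Q(\sqrt[3]{p})$, $K=\Q(\sqrt{q})$, and $L=F\cdot K=\Q(\sqrt[3]{p},\sqrt{q})$.

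First, I would verify the two rank hypotheses needed to invoke Proposition~\ref{prop: GFP 3.3} for the triple $(E,F,K)$. The cubic rank stability $\rank E(F)=0$ is immediate from Corollary~\ref{cor:Pcong}, since that is precisely how $\sP_\con$ was defined. For the quadratic side, $E$ has rank $0$ over $\Q$ (it is $32a2$), and the standard decomposition of Mordell--Weil ranks under a quadratic base change yields
\[
\rank E(K)=\rank E(\Q)+\rank E^{(q)}(\Q)=\rank E^{(q)}(\Q).
\]
By the very definition of $\sQ_\con$ (Definition~\ref{def:Qcong}), $q$ is a congruent number, so $\rank E^{(q)}(\Q)\ge 1$ and hence $\rank E(K)>0$.

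With both conditions in hand, Proposition~\ref{prop: GFP 3.3} asserts that $\cO_F$ is Diophantine in $\cO_L$; equivalently, $L/F$ is integrally Diophantine. Next I would invoke case~(c) in the list of known instances of the Denef--Lipshitz conjecture from the introduction: since $F=\Q(\sqrt[3]{p})$ has exactly one complex place, $F/\Q$ is integrally Diophantine. Applying the \ref{Thm:transitivity} to the tower $\Q\subseteq F\subseteq L$, I conclude that $L/\Q$ is integrally Diophantine, i.e.\ $\Z$ is Diophantine in $\cO_L$.

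Finally, the \emph{Claim} and its justification given right after Proposition~\ref{prop: GFP 3.3} show that once $\Z$ is Diophantine in $\cO_L$, the \ref{Thm:Matiyasevich} immediately forces the analogue of \ref{H10} to be unsolvable over $\cO_L$. This completes the argument. There is essentially no obstacle: all the analytic and algebraic content has already been absorbed into Corollary~\ref{cor:Pcong} (via Theorem~\ref{thm:growth-rank} and the computation of $\#H_{E,3}/\#G_{E,3}$ in Lemma~\ref{lem:H/G}) and into the definition of $\sQ_\con$ (whose positive density is the deep input coming from the work of Smith and Kriz). The proof itself is a short assembly of Proposition~\ref{prop: GFP 3.3}, the quadratic rank decomposition, the one-complex-place case of the Denef--Lipshitz conjecture, and transitivity.
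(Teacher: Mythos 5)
Your proposal is correct and follows essentially the same route as the paper, which proves Theorem~\ref{thm:cong} by repeating the argument of Theorem~\ref{thm: improve P and Q of GFP} with the congruent number curve $32a2$: rank $0$ over $\Q(\sqrt[3]{p})$ from Corollary~\ref{cor:Pcong}, rank jump over $\Q(\sqrt{q})$ from the definition of $\sQ_\con$ via the quadratic twist decomposition, then Proposition~\ref{prop: GFP 3.3}, the one-complex-place fact for $\Q(\sqrt[3]{p})$, and the \ref{Thm:transitivity}. Your write-up simply makes explicit the steps the paper leaves implicit.
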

\begin{proof}
This follows from the same proof as Theorem~\ref{thm: improve P and Q of GFP}.
\end{proof}

\appendix
\section{MAGMA code for verification of maximal disjointedness}\label{app:Harris}
The Magma code below, written by Harris B.~Daniels, is used to verify whether two elliptic curves are maximally disjoint at a prime $p$.

{\small
\begin{verbatim}
    function SimpleSplit(f)
    Factors := [vec[1] : vec in Factorization(f) | Degree(vec[1]) gt 1];
    Fields := [NumberField(fac) : fac in Factors];
    K := Rationals();
    for F in Fields do
        K := Compositum(F,K);
    end for;
    return K;
end function;

//Given 2 elliptic curves E1 and E2 and a prime p, it returns true if Q(E1[p]) meet Q(E2[p]) eq Q(zeta_p).
//Note that if Q(E1[p]) eq Q(E2[p]) eq Q(zeta_p), then this function will return true.

function HasMaxDisjointPTorsion(E1,E2,p)
    f1 := DivisionPolynomial(E1,p);
    f2 := DivisionPolynomial(E2,p);
    K1<a1> := SimpleSplit(f1);
    K2<a2> := SimpleSplit(f2);
    P1<y1> := PolynomialRing(K1);
    P2<y2> := PolynomialRing(K2);

//Now we see if we need to add any y coordinates.
    rts1 := [vec[1] : vec in Roots(P1!f1) | IsIrreducible(Evaluate(DefiningPolynomial(E1),[vec[1],y1,1]))];
    rts2 := [vec[1] : vec in Roots(P2!f2) | IsIrreducible(Evaluate(DefiningPolynomial(E2),[vec[1],y2,1]))];
//If there are y-coordinates to add, we add them now to K1 and K2 to make L1 and L2.
    if #rts1 ne 0 then
        p1 := Evaluate(DefiningPolynomial(E1),[rts1[1],y1,1]);
        L1<b1> := AbsoluteField(ext<K1 | p1>);
    else
        L1<b1> := K1;
    end if;

    if #rts2 ne 0 then
        p2 := Evaluate(DefiningPolynomial(E2),[rts2[1],y2,1]);
        L2<b2> := AbsoluteField(ext<K2 | p2>);
    else
        L2<b2> := K2;
    end if;

    dp1 := DefiningPolynomial(L1); //The splitting field of L1 is Q(E1[p])
    dp2 := DefiningPolynomial(L2); //The splitting field of L2 is Q(E2[p])

    //This last step checks the degrees are what they should be.
    s1 := #GaloisGroup(dp1);
    s2 := #GaloisGroup(dp2);
    s3 := #GaloisGroup(dp1*dp2);
    return s1*s2 div EulerPhi(p) eq s3;
end function;

E1 := EllipticCurve("704g1");
E2 := EllipticCurve("1472j1");
p := 3; // change the 3 to a 2 to check for maximal disjointness at 2


HasMaxDisjointPTorsion(E1,E2,p);
\end{verbatim}
}

\section{SAGE code for preserving the rank in cubic extensions}

\subsection{Elliptic curve of conductor \texorpdfstring{$704$}{}}\label{S:704GFP}
Start with the elliptic curve $E = $ \href{https://www.lmfdb.org/EllipticCurve/Q/704/d/1}{$704g1$} (Cremona label).
\begin{enumerate}
\item $E$ has good ordinary reduction at $3$.
\item At all the primes $p$, the residual Galois representation of $E$ has maximal image.
\item We also check that $3$ is not a prime of anomalous reduction.
\item The twist of $E$ by $-3$ has rank $0$.
To calculate the rank of the twisted elliptic curve use:
\begin{verbatim}
E = EllipticCurve('704g1')
Et = E.quadratic_twist(-3)
Et.rank()
\end{verbatim}
\item It is clear that the base-change of $E$ to $\Q(\sqrt{-3})$ has rank 0.
Just to be sure, we can check:
\begin{verbatim}
E = EllipticCurve('704g1')
K.<t> = QuadraticField(-3)
EK = E.base_extend(K)
EK.rank()
\end{verbatim}
\item We check that the order of the torsion group of $E$ over $\Q(\sqrt{-3})$ is not divisible by $3$.
Just to be sure, we can check:
\begin{verbatim}
E = EllipticCurve('704g1')
K.<t> = QuadraticField(-3)
EK = E.base_extend(K)
EK.torsion_group()
\end{verbatim}
\item The primes of bad reduction are $2, 11$ (which are both inert in $\Q(\sqrt{-3})$).
This is also clear from \cite[Table~2]{krizli}.
To obtain the local data use: 
\begin{verbatim}
E = EllipticCurve('704e1')
K.<t> = QuadraticField(-3)
EK = E.base_extend(K)
EK.local_data(2)    
EK.local_data(11)
\end{verbatim}
Output: The Tamagawa number at the inert prime $2$ is $1$ and at the inert prime $11$ is $1$.
\item To see that $3\nmid \#\Sha(E/\Q(\sqrt{-3}))$, we can use \cite[Theorem~2.1]{Qiu14}.
It suffices to check that $\#\Sha(E^{(-3)}/\Q)$ is not divisible by $3$.
For this, use:
\begin{verbatim}
E = EllipticCurve('704g1')
Et = E.quadratic_twist(-3)
St = Et.sha()
St.an()    
\end{verbatim}
Output: The (analytic) Shafarevich--Tate group is trivial for the quadratic twist of $E$.
\end{enumerate}

\begin{remark}
This elliptic curve satisfies the Hypothesis \eqref{star} when the imaginary quadratic field is $\Q(\sqrt{-7})$, see \cite[Table~2]{krizli}.
\end{remark}

\subsection{Elliptic curve of conductor \texorpdfstring{$1472$}{}}
Now we work with the elliptic curve $E = $ \href{https://www.lmfdb.org/EllipticCurve/Q/1472/e/1}{$1472j1$} (Cremona label).
For this curve, we can check all the properties as in the previous section.
This elliptic curve is not in the table of Kriz--Li since the conductor is larger than $750$.
We now check that Hypothesis \eqref{star} is satisfied in Appedix~\ref{app: Hyp star}.

\section{SAGE code for verifying \texorpdfstring{Hypothesis~\eqref{star}}{} for curves \texorpdfstring{\href{https://www.lmfdb.org/EllipticCurve/Q/704/d/1}{$704g1$}}{},  \texorpdfstring{\href{https://www.lmfdb.org/EllipticCurve/Q/1472/e/1}{$1472j1$}}{}, and \texorpdfstring{\href{https://www.lmfdb.org/EllipticCurve/Q/557/b/1}{$557b1$}}{}}
\label{app: Hyp star}

\subsection{When conductor is  \texorpdfstring{$704$ or $1472$}{}}\label{S:star}
The prime $2$ is a prime of bad reduction for the curve of conductor $704$ and $1472$.
Since the Heegner hypothesis is satisfied for $K=\Q(\sqrt{-7})$, it follows immediately that $2$ splits in the extension $K/\Q$.
The other way to check is to use quadratic reciprocity: since $-7\equiv 1\pmod{4}$ it follows that the discriminant $d_K$ of $K$ is $-7$.
Since, $d_K \equiv 1\pmod{8}$, we know that $2$ splits in $K$.
The same argument shows that $2$ splits in $K=\sqrt{-615}$.

\subsubsection*{Finding the size of \texorpdfstring{$\tilde{E}^{ns}(\bF_2)$}{}} Note that $2$ is a prime of additive reduction for both the elliptic curves of interest.
Hence, 
\[
\abs{\tilde{E}^{ns}(\bF_2)} = \abs{\tilde{E}(\bF_2)}-1 = 3-1 = 2.
\]

\subsection*{Finding the Heegner Point in \texorpdfstring{$K$}{}}

\subsubsection*{When \texorpdfstring{$K$}{} has class number \texorpdfstring{$1$}{}}
Since $\Q(\sqrt{-7})$ has class number $1$, the Heegner point lies in $K$ itself.
However, we write a general code suggested by J.~Cremona available \href{https://groups.google.com/g/sage-nt/c/t591aIzT2_s?pli=1}{here} which will allow us to change $K$:
\begin{verbatim}
E = EllipticCurve('704g1') 
a = -7 # conductor of the imaginary quadratic field 
P = E.heegner_point(a) # this Heegner point is in H(K)
P1 = P.point_exact() 
K = P1[0].parent()
E = P1.curve()
G = K.automorphisms()
def apply(sigma, pt):
     E = pt.curve()
     return E([sigma(c) for c in pt])
sum([apply(sigma,P1) for sigma in G], 0)
s = K(a).sqrt()
H = sum([apply(sigma,P1) for sigma in G if sigma(s)==s], 0) 
# this is the Heegner point in K
r1 = -H[0]/-H[1] # r= -x(H)/y(H) #parameter corresponding to H
H2 = H + H # sum of Heegner points 2H
r2 = -H2[0]/-H2[1] # r= -x(H2)/y(H2) #parameter corresponding to 2H
\end{verbatim}

\subsection*{Calculating the \texorpdfstring{$2$}{}-adic valuation of \texorpdfstring{$\abs{\widetilde{E}^{\ns}(\F_2)} \log_{\omega_E}(P)$}{}}

\begin{verbatim}
L2=r2.parent()
u=QQ.valuation(2)
vL2=u.extensions(L2); v2=vL2[0]
print(v2(r2), v2(r2-r2^3/3)) 
# obtain what to evaluate by checking the first few terms of E.formal_group().log(10)
\end{verbatim}
The output is $1 \ 1$.
This means that Hypothesis \eqref{star} is satisfied for $K=\Q(\sqrt{-7})$.

\subsubsection*{When \texorpdfstring{$K$}{} does not have class number \texorpdfstring{$1$}{}}
Our code above works for any imaginary quadratic field, not just those with class number $1$.
We use it to check that both the elliptic curves of interest, namely \href{https://www.lmfdb.org/EllipticCurve/Q/704/d/1}{$704g1$} and \href{https://www.lmfdb.org/EllipticCurve/Q/1472/e/1}{$1472j1$} satisfy Hypothesis \eqref{star} when the imaginary quadratic field is $\Q(\sqrt{-615})$.

\subsection{When conductor is  \texorpdfstring{$557$}{}}
In \cite[Table~2]{krizli}, it is already checked that $E$=\href{https://www.lmfdb.org/EllipticCurve/Q/557/b/1}{$557b1$} satisfies Hypothesis \eqref{star} when the imaginary quadratic field is $\Q(\sqrt{-7})$.
To check that the same holds for $\Q(\sqrt{-79})$ (resp. $\Q(\sqrt{-127})$), we need to first verify that $2$ splits in $\Q(\sqrt{-79})$ (resp. $\Q(\sqrt{-127})$).
This follows from the observation that $-127 \equiv 1\pmod{8}$.

For the elliptic curve
$E=\href{https://www.lmfdb.org/EllipticCurve/Q/557/b/1}{557b1}$, the prime $2$ is a prime of good reduction so we can use the code to know $\abs{\widetilde{E}^{\ns}(\F_2)}$:

\begin{verbatim}
EllipticCurve(GF(2),'557b1').cardinality()
\end{verbatim}
The output is 1.

Now we calculate the $2$-adic valuation of $\abs{\widetilde{E}^{\ns}(\F_2)} \log_{\omega_E}(P) = \log_{\omega_{E}}(P)$:

\begin{verbatim}
E = EllipticCurve('557b1') 
a = -127 # replace with -79 for the other verification
P = E.heegner_point(a) 
P1 = P.point_exact(300) #might have to increase precision
K = P1[0].parent()
E = P1.curve()
G = K.automorphisms()
def apply(sigma, pt):
     E = pt.curve()
     return E([sigma(c) for c in pt])
sum([apply(sigma,P1) for sigma in G], 0)
s = K(a).sqrt()
H = sum([apply(sigma,P1) for sigma in G if sigma(s)==s], 0) 
# this is the Heegner point in K
r1 = -H[0]/-H[1] # r= -x(H)/y(H) #parameter corresponding to H
L1=r1.parent()
u=QQ.valuation(2)
vL1=u.extensions(L1); v1=vL1[0]
print(v1(r1-r1^3/3))
\end{verbatim}
The output is $1$.
This means that Hypothesis \eqref{star} is satisfied for $K=\Q(\sqrt{-127})$.

\bibliographystyle{amsalpha}
\bibliography{references}
\end{document}